\documentclass[aip,jmp,amsmath,amssymb,reprint,onecolumn,nofootinbib,notitlepage]{revtex4-1}

\usepackage{graphicx}
\usepackage{bm}
\usepackage[utf8]{inputenc}
\usepackage[T1]{fontenc}

\makeatletter
\def\@email#1#2{\endgroup
 \patchcmd{\titleblock@produce}
  {\frontmatter@RRAPformat}
  {\frontmatter@RRAPformat{\produce@RRAP{*#1\href{mailto:#2}{#2}}}\frontmatter@RRAPformat}
  {}{}
}
\makeatother

\usepackage{amsthm}
\usepackage{physics}
\usepackage{enumerate}
\usepackage[shortlabels]{enumitem}

\PassOptionsToPackage{hyphens}{url} 
\usepackage[breaklinks=true]{hyperref}
\bibpunct{[}{]}{;}{n}{}{} 
\usepackage[capitalise]{cleveref}

\newtheorem{theorem}{Theorem}
\newtheorem{proposition}[theorem]{Proposition}

\newtheorem{corollary}[theorem]{Corollary}
\newtheorem{lemma}[theorem]{Lemma}

\theoremstyle{definition}
\newtheorem{definition}[theorem]{Definition}
\newtheorem{assumption}{Assumption}
\theoremstyle{remark}
\newtheorem{remark}{Remark}

\crefname{enumi}{}{}

\DeclareMathOperator*{\argmin}{\mathrm{argmin}}

\DeclareMathOperator{\dom}{dom}
\newcommand{\Hperhomdual}{H^{-1}_\mathrm{per,hom}}
\newcommand{\Hperhom}{H^{1}_\mathrm{per,hom}}
\newcommand{\Xdens}{\mathcal{X}}
\newcommand{\Xdensaff}{\mathcal{X}_\mathrm{aff}}
\newcommand{\Xpot}{\mathcal{X}^*}
\newcommand{\GG}{\mathbf{G}}
\newcommand{\RR}{\mathbf{R}}
\newcommand{\densSet}{\mathcal{I}_N(\Omega)}
\newcommand{\weakto}{\rightharpoonup}

\newcommand{\FLL}{F_\mathrm{LL}}
\newcommand{\eps}{\varepsilon}
\newcommand{\rr}{\mathbf{r}}
\renewcommand{\Tr}{\operatorname{Tr}\,}
\newcommand{\mpgrid}{\mathcal{G}_\mathrm{MP}}
\newcommand{\rme}{\mathrm{e}}
\newcommand{\rmi}{\mathrm{i}}
\newcommand{\rmr}{\mathrm{d}}
\mathchardef\mhyphen="2D 
\newcommand \wlim {\mathop\mathrm{w\mhyphen lim}}

\begin{document}

\title{Moreau--Yosida-based Kohn--Sham Inversion for Periodic Systems}
\author{Vebj{\o}rn H. Bakkestuen}
\email{vebjorn.bakkestuen@oslomet.no}
\affiliation{\mbox{Department of Computer Science, Oslo Metropolitan University, Oslo, Norway}}

\author{Michael F. Herbst}
\affiliation{Mathematics for Materials Modelling (MatMat), Institute of Mathematics \& Institute of Materials, École Polytechnique Fédérale de Lausanne, Lausanne, Switzerland}

\author{Vegard Falm{\aa}r}
\affiliation{\mbox{Department of Computer Science, Oslo Metropolitan University, Oslo, Norway}}

\author{Markus Penz}
\affiliation{\mbox{Department of Computer Science, Oslo Metropolitan University, Oslo, Norway}}
\affiliation{Arnold Sommerfeld Centre for Theoretical Physics, Ludwig-Maximilians-Universität München, Munich, Germany}
\affiliation{Max Planck Institute for the Structure and Dynamics of Matter and Center for Free-Electron Laser Science, Hamburg,
Germany}

\author{Andre Laestadius}
\email{andre.laestadius@oslomet.no}
\affiliation{\mbox{Department of Computer Science, Oslo Metropolitan University, Oslo, Norway}}
\affiliation{Hylleraas Centre for Quantum Molecular Sciences, Department of Chemistry, University of Oslo, Oslo, Norway}

\date{\today}

\begin{abstract}
    Density-potential inversion for periodic systems within Moreau--Yosida-regularised density-functional theory is investigated, both theoretically and numerically. We develop the framework in a periodic homogeneous Sobolev space and use it to recover the exchange-correlation potential of Kohn--Sham theory through a limiting procedure. A key analytical ingredient is the proof of lower semicontinuity of the non-interacting kinetic-energy functional in the chosen topology. The proximal mapping, together with its algorithmic evaluation, plays a central role in the resulting inversion scheme. Numerical experiments illustrate the performance and properties of the method for both the Kohn--Sham and Gross--Pitaevskii equations.
\end{abstract}

\keywords{density-functional theory; density-potential inversion; Kohn--Sham inversion; Moreau--Yosida regularisation; homogeneous Sobolev spaces}

\maketitle

\section{Introduction}
Density-functional theory (DFT) calculations are an indispensable tool in theoretical and practical applications of the many-body electronic Schrödinger equation across disciplines such as chemistry, materials science, and solid-state physics~\cite{Burke2012,Verma2020,Teale2022}.
Its central variable is the one-particle density, here denoted $\rho(\rr)$, $\rr \in \Omega \subseteq \mathbb R^d$, where for the standard formulation of DFT $\Omega=\mathbb{R}^3$.   
Practical DFT calculations are performed almost exclusively within the Kohn--Sham (KS) framework, in which the sought-after ground-state density $\rho_\mathrm{gs}(\rr)$ of an interacting system is assumed to be a solution to a non-interacting problem as well. This fictitious KS system is determined by the effective KS potential $v_\mathrm{KS}$, a local potential responsible for generating the \emph{same} ground-state density of the original, interacting system.

The universal density functional $F(\rho)$ encapsulates all many-body effects such that the ground-state energy for a given external scalar potential $v: \Omega \to \mathbb R$  can be written as
\begin{equation*}
    E(v) =  F(\rho_\mathrm{gs}) + \langle v, \rho_\mathrm{gs} \rangle, \qquad   \langle v, \rho \rangle = \int_\Omega v(\rr) \rho(\rr) \mathrm{d} \rr .
\end{equation*}

The explicit form of $F$ is unknown in general.  
Applications of Kohn--Sham density-functional theory (KS-DFT) rely on approximations to the functional from which the effective KS potential is derived. Over the years, substantial efforts have been devoted to obtaining accurate approximations~\cite{Toulouse2023} which typically address the \textit{exchange-correlation} (xc) energy $E_\mathrm{xc}$ in the decomposition
\begin{equation}\label{eq:DensityFunctionalDecomp}
    F(\rho) = T(\rho) + E_\mathrm{H}(\rho) + E_\mathrm{xc}(\rho).
\end{equation}
Here $T(\rho)$ is the non-interacting kinetic-energy functional and $E_\mathrm{H}(\rho)$ is the (direct) Hartree energy.
Given the potential of the physical system $v$, 
the KS potential is typically split into $v_\mathrm{KS} =v + v_\mathrm{H} + v_\mathrm{xc}$, where $v_\mathrm{H}$ is the Hartree (electrostatic) potential and $v_\mathrm{xc}$ is the xc potential that must be approximated.
Although DFT in general and KS-DFT in particular are exact reformulations of the many-body Schrödinger equation and significant advancements have been made over the years, KS-DFT with the current functional approximations remains unable to predict certain physical processes.
Inaccurate approximations to $v_\mathrm{xc}$ are important sources of errors in KS-DFT calculations, e.g., band-gap underestimation~\cite{Sham1983PRL,Sham1985PRB,Perdew1982PRL,Perdew1983PRL,Gruning2006} and self-interaction errors~\cite{ZhangYang-jcp1998}.
Developing improved functional approximations continues to be a central research focus~\cite{Mardirossian2017}. A key difficulty lies in the limited mathematical insight into the relationship between the exact universal density functional and the widely used approximations~\cite{Teale2022}, which hampers the systematic design of approximate functionals.

To obtain an effective KS potential corresponding to a given interacting ground-state density is a highly non-trivial task~\cite{Gar21,Trushin2024}. We aim to shed more light on this issue by both mathematically and numerically addressing this inverse KS problem: given knowledge of the exact (or near-exact) ground-state density, reconstruct the effective KS potential. 
Despite its importance, both as a tool to aid the development of better functional approximations and for the fundamental understanding of KS-DFT, KS inversion remains a challenging problem~\cite{Shi2021,Aryasetiawan1988,Knorr-Godby-1992,Gorling1992,ZP1993,Wang1993,ZMP1994,Knorr_1994,vanLeeuwen_1994,Yang_Wu_2002,Wu-Yang-2003,Peirs2003,Kadantsev2004,Bulat2007,Gaiduk2013,Wagner2014,JensenWasserman2017,Zhang2018,Ou2018,kumar2019universal,Kanungo2019,Kumar2020,Garrick2020,Callow2020,Shi2021,Erhard2022,Gould2023,Ravindran2025,Herbst_2025,Penz2025-imag-time}. Several explicit inversion schemes have been proposed, where we highlight the pioneering work of \citet{vanLeeuwen_1994} and those of \citet{ZMP1994} (ZMP) and \citet{Wu-Yang-2003}. Earlier works focused on molecules and isolated systems, while only recently solid-state problems have been investigated~\cite{Aouina_2023,Ravindran2025,Herbst_2025,Bohle2026}.
Notwithstanding these recent developments, a robust and efficient numerical scheme for KS inversion largely remains an open problem~\cite{Shi2021,Nam2021,Shi2022,Crisostomo2023,Wrighton2023,Gould2023}.

The effective (or just the xc) potential was derived as a strict mathematical limit within the Moreau--Yosida (MY) regularised formulation of DFT~\cite{Kvaal2014,KSpaper2018,Laestadius_2019_CDFT,KS_PRL_2019,PRLerrata} in \cite{Penz_2023}, recently summarised in \cite{Penz_2025Perspective_arXiv}. 
Evaluating this limit involves obtaining the proximal point of a ground-state density,
which is the minimiser of the infimal convolution defining the MY envelope.
In addition, one needs to apply the duality mapping $J: X \to X^*$ between the density and potential spaces. 
In this set-up, the MY approach regularises $T(\rho)$ (or some variant of it), yielding a smooth functional $T^\eps(\rho)$ whose proximal mapping produces a proximal density $\rho^\eps$. The effective potential $v_\mathrm{KS}$ then emerges as
\begin{equation*}
   v_\mathrm{KS} = \lim_{\eps \to 0^+} \frac{1}{\eps} J(\rho^\eps - \rho_\mathrm{gs}),
\end{equation*}
under the assumption that a KS potential indeed exists for the given ground-state density $\rho_\mathrm{gs}$ (i.e., $-v_\mathrm{KS}\in \underline{\partial} T(\rho_\mathrm{gs})\neq \emptyset$).
This connection was recently explored for three-dimensional periodic systems~\cite{Herbst_2025}, and 
has also been analysed in a one-dimensional periodic setting~\cite{Bohle2026}.
Moreau--Yosida regularisation provides a general framework for KS inversion suited for periodic settings which will be explored in this work.

\subsection{Main contributions} 
In this work we develop a mathematically rigorous framework for density-potential inversion in periodic systems and demonstrate its theoretical and practical relevance for KS-DFT. Our main contributions are the following.
\begin{enumerate}[label=(\roman*)]
    \item \emph{A general inversion scheme.} We formulate a Moreau--Yosida-based inversion algorithm for densities in reflexive Banach spaces, while for some of the results a Hilbert-space setting is necessary. 
    The scheme centres on the proximal mapping  and requires only the technical conditions listed in
    Assumption~\ref{assump:FX}, notably the lower‑semicontinuity of the considered (non-interacting) density functionals.
    In addition, we can show that the propagation of errors in the input density is bounded utilising the non-expansiveness properties of the proximal mapping.
    
    \item \emph{Application to periodic homogeneous Sobolev spaces.} In particular, we apply our general inversion framework to the dual of the periodic homogeneous Sobolev space, 
    $H_\mathrm{per,hom}^{-1}(\Omega)$, that is a Hilbert space. We prove the lower‑semicontinuity of the kinetic-energy constrained-search functional 
    for this particular topology, which guarantees the applicability of our general Moreau--Yosida inversion scheme. In this setting, the framework offers an algorithm to determine the exact exchange‑correlation (xc) potential of KS-DFT. Amongst other things, this places the ZMP method on firm functional-analytic footing for the first time.
    
    \item \emph{Numerical analysis of the inversion scheme.}
    The theoretical results are investigated 
    with numerical examples using the periodic one-dimensional Gross--Pitaevskii equation and the Kohn--Sham equation for three-dimensional bulk materials. 
    In particular, we emphasise the performance of different algorithms for determining the proximal density, and the specific form of the non-interacting functional. 
\end{enumerate}

\subsection{Outline}
The remainder of the article is structured as follows. 
We begin with some preliminaries in \cref{sec:Prel}. 
\Cref{subsec:DFT-KS} reviews the essential mathematical content of density-functional theory and the Kohn--Sham formulation, emphasising the distinction between interacting and non-interacting systems. \Cref{subsec:BanachPrelims} gathers the functional-analytic preliminaries required for our analysis. In \Cref{subsec:MYreg}, we present the Moreau--Yosida regularisation for the purpose of applying it to density-potential (KS) inversion. This includes the introduction of the proximal mapping and establishing its basic properties.
\Cref{subsec:homogeneousSobolev} specialises the abstract framework to periodic systems by introducing the periodic homogeneous Sobolev spaces, and certain connections to the density-functional problems are proven.
Next, \Cref{sec:InversionScheme} presents the main results of the Moreau--Yosida-based density-potential inversion scheme. We first, in \cref{subsec:InversionAlgorithm}, present and prove results for the general inversion algorithm under Assumption~\ref{assump:FX}. 
Additionally, we investigate the impact of different guiding functionals and the choice of function spaces. In \cref{subsec:kin-functional} we prove that the kinetic-energy constrained-search functional is lower semicontinuous on the space presented in~\Cref{subsec:homogeneousSobolev} which shows that the technical assumptions listed in Assumption~\ref{assump:FX} are fulfilled. This connects the functional-analytic machinery to the density-functional problem of solving a Schrödinger-type equation.
\cref{subsec:PeriodicIKS} presents the Moreau--Yosida-based Kohn--Sham inversion scheme where we also remark on the exact connection to the ZMP method. \Cref{sec:Numerics} showcases numerical applications, where in \cref{subsec:InversionApproaches} we first outline the used algorithms that subsequently are applied to the periodic Gross--Pitaevskii equation (\cref{subsec:GPE}) and to bulk materials (\cref{subsec:BulkMaterials}).
We numerically examine the behaviour of the different algorithms for determining the proximal density, the form of the guiding functional, and how errors in the input density propagate.

\section{Preliminaries}\label{sec:Prel}
To set the theoretical stage for the inversion scheme, let us introduce the two main ingredients---density-functional theory and the Moreau--Yosida regularisation. The background of DFT is presented for general domains in arbitrary dimensions focusing on the essential ingredients for a rigorous inversion scheme. Moreover, the MY regularisation is presented on a general class of Banach spaces---a formulation we will employ to present the general KS inversion scheme. We further discuss some useful tools on Banach spaces and, in addition, introduce a specialised Hilbert-space setting in the form of periodic homogeneous Sobolev spaces that will be used for the formulation of the method in the later sections.

\subsection{Density-Functional Theory \& the Kohn--Sham Method}\label{subsec:DFT-KS}
The usual formulation of DFT considers a system of $N$ electrons subject to a scalar potential $v(\rr)$ on $\mathbb{R}^3$. In the present formulation, we allow for a more general setting in which the spatial domain $\Omega$ is either an open subset of $\mathbb{R}^d$ or the flat torus $\mathbb{T}^d$,  where $d \geq1$. The standard formulation of DFT is then recovered by choosing $\Omega= \mathbb{R}^3$. Moreover, let $\mathcal{H}_N$ denote the usual $N$-particle fermionic Hilbert space on $\Omega^N$, defined with the appropriate boundary conditions---for example, periodic boundary conditions for periodic systems. For simplicity, we will disregard spin in our treatment.

Now, let $v:\Omega \to \mathbb R$ be given and denote by $\hat{H}: \mathcal{H}_N \to \mathcal{H}_N$ the many-body electronic Hamiltonian  $\hat{H} = \hat{T} + \hat{W} + \hat{V}$ where $\hat{T}$ denotes the kinetic energy operator, $\hat{W}$ is the (typically Coulombic) two-body interaction operator, and $\hat{V} = \sum_{j=1}^N v(\rr_j)$ the potential energy operator, equalling $v(\rr)$ lifted to the $N$-body domain. Here, $\hat V$ and $\hat{W}$ are always assumed to be such that the KLMN theorem~\cite[Thm.~X.17]{reed-simon-2} guarantees a Hamiltonian that is bounded below and self-adjoint with form domain $Q(\hat H)=Q(\hat T)$ (see \cite[Sec.~VIII.6]{reed-simon-1}; since $\hat T = -\frac{1}{2}\Delta$ this corresponds to the set of all $\psi \in \mathcal{H}_N$ for which $\|\nabla\psi\|_{L^2(\Omega^N)}<\infty$). Later, when a space for potentials $v$ is defined, it must be such that every element still respects this condition.
The ground-state energy of the system is given by the variational problem 
\begin{equation}\label{eq:VariationalProblem}
	E(v) = \inf_{\psi \in \mathcal{W}_N} \langle \hat{H}\psi,  \psi \rangle = \inf_{\psi \in \mathcal{W}_N} \langle(\hat{T} + \hat{W} + \hat{V})\psi, \psi\rangle , 
    \quad \langle f,g\rangle = \int_{\Omega^N} \overline{f} g \dd{\rr_1}\cdots\dd{\rr_N}, 
\end{equation} 
where $\mathcal{W}_N \subset \mathcal{H}_N$ denotes the set of admissible states. This set consists of all normalised states in the form domain of $\hat{H}$,
\begin{equation*}
	\mathcal{W}_N = \qty{\psi \in Q(\hat H) : \norm{\psi}_{L^2(\Omega^N)} = 1}.
\end{equation*}

Rather than tackling the full many-body wave-function, DFT is formulated in terms of the ground-state density $\rho_\mathrm{gs}$. In general, a density $\rho$ can be computed from a state $\psi \in \mathcal{W}_N$ by integration of $|\psi|^2$ over all but one coordinate,
\begin{equation} \label{eq:def:rho}
	\rho_\psi(\rr) = N \int_{\Omega^{N-1}} \abs{\psi(\rr, \rr_2, \rr_3,\dots,\rr_N)}^2 \dd{\rr_2}\dd{\rr_3}\cdots\dd{\rr_N}.
\end{equation}
A density is said to be a ground-state density $\rho_\mathrm{gs}$ if it is computed from a ground state $\psi_\mathrm{gs}\in \mathcal{W}_N$ (i.e., a minimiser of $E(v)$ in \cref{eq:VariationalProblem} satisfying $\hat{H}\psi = E \psi$ in the weak sense). In general, if a density $\rho$ is computed from a state $\psi$ using \cref{eq:def:rho}, we write $\psi \mapsto \rho$. Following the notation of \citet{Lieb1983}, let $\densSet$ denote the set of $N$-representable densities, that is, all functions $\rho\in L^1_\#(\Omega)$ such that $\sqrt{\rho} \in H_\#^1(\Omega)$ (the subscript $\#$ signals the appropriate boundary conditions to be included) and $\int_\Omega\rho(\rr) \dd{\rr} = N$. It holds that $\rho \in \densSet$ if and only if there exist a state $\psi \in \mathcal{W}_N$ with $ \psi \mapsto \rho$~\cite{Lieb1983}.
The central object in DFT is the \textit{universal density functional} $F(\rho)$, which, by the fact that the energy $E(v)$ is a concave functional, can be defined as the Legendre--Fenchel conjugate of $E(v)$ (with an unconventional choice of signs)
\begin{equation} \label{eq:def:UniversalFunctional}
	F(\rho) = \sup_{v\in X^\ast} \qty{E(v) - \langle v, \rho \rangle }.
\end{equation}
Here, $X^*$ denotes the dual of the selected density space $X$, which we assume to be a real Banach space with $\densSet \subset X$. We use the notation $\langle v, \rho \rangle $ for the dual pairing between elements $\rho \in X$ and $v \in X^\ast$. By construction, $F(\rho)$ is a proper, convex, and lower semicontinuous (l.s.c.) functional. 
\Cref{eq:def:UniversalFunctional} is also known as the \emph{Lieb functional}, and by Legendre--Fenchel back-transform the energy can be again obtained as
\begin{equation*}
    E(v) = \inf_{\rho \in X} \qty{F(\rho) + \langle v, \rho \rangle}.
\end{equation*}
This retrieves \cref{eq:VariationalProblem} from \cref{eq:def:UniversalFunctional}. However, other definitions of universal density functionals exist in which the connection to the many-body state, and thus also to the Schrödinger equation, is more direct. In particular, let us consider the energy functional computed as the variation over density matrices $\Gamma$, i.e., mixed states. Analogously to the set of admissible pure states $\mathcal{W}_N$, we denote by $\mathcal{D}_N$ the set of admissible density matrices,
\begin{equation*}
    \mathcal{D}_N = \qty{\Gamma\in\mathcal{B}(\mathcal{H}_N) : \Gamma\geq 0, \, \Tr\Gamma = 1,\,  \Tr [\hat T\Gamma]<\infty }.
\end{equation*} 
Here $\mathcal{B}(\mathcal{H}_N)$ denotes the set of bounded linear operators on $\mathcal{H}_N$ and any $\Gamma \in \mathcal{D}_N$ is a positive semi-definite, trace-class operator normalised to unit trace with finite kinetic energy. It then follows that 
\begin{equation} \label{eq:EnergyFDM}
    E(v) = \inf_{\Gamma\in \mathcal{D}_N} \Tr[\hat{H}\Gamma] 
    = \inf_{\rho\in X} \qty{\inf_{\Gamma\mapsto \rho} \Tr[(\hat{T} + \hat{W})\Gamma] + \Tr[\hat{V}\Gamma] } 
    = \inf_{\rho\in X} \qty{F_\mathrm{DM}(\rho) + \langle v, \rho \rangle } ,
\end{equation}
where we have introduced the \textit{mixed-state constrained-search functional} $F_\mathrm{DM} : X \to \overline{\mathbb{R}} = \mathbb{R}\cup \{+\infty\} $ as 
\begin{equation*}
    F_\mathrm{DM}(\rho) = \inf_{\mathcal{D}_N \ni \Gamma\mapsto \rho} \Tr[(\hat{T} + \hat{W})\Gamma].
\end{equation*}
By varying only over pure states $\Gamma = \langle\psi,\cdot\rangle\psi$ one instead obtains the \textit{pure-state constrained-search functional} or Levy--Lieb functional $\FLL: X \to \overline{\mathbb{R}}$ as
\begin{equation*}
    \FLL(\rho) = \inf_{\mathcal{W}_N \ni \psi \mapsto \rho} \langle (\hat{T} + \hat{W})\psi,  \psi \rangle.
\end{equation*}
Since $\FLL(\rho)$ is obtained by the minimisation over the restricted set of pure states, it immediately follows that $F_\mathrm{DM}(\rho) \leq \FLL(\rho)$ for all $\rho \in X$.
By construction, all three universal functionals give the same energy functional as 
\begin{equation*}
    E(v) = \inf_{\rho \in X} \qty{F(\rho) + \langle v, \rho \rangle}
    = \inf_{\rho \in X} \qty{F_\mathrm{DM} (\rho) + \langle v, \rho \rangle}
    = \inf_{\rho \in X} \qty{\FLL(\rho) + \langle v, \rho \rangle},
\end{equation*}
but the back transformation only holds if the universal functional is convex and l.s.c.~\cite[Th.~2.22]{Barbu-Precepanu}. This can be shown to hold for $F_\mathrm{DM}(\rho)$ on $L^1(\mathbb{R}^3)\cap L^3(\mathbb{R}^3)$ and thus $F(\rho) = F_\mathrm{DM}(\rho)$ in this case, while $F(\rho)$ is still the closed convex envelope of $\FLL(\rho)$~\cite{Lieb1983}.
This identity of the Lieb functional with the mixed-state constrained-search functional is an integral part of the convex-analytic formulation of DFT, and it cannot be taken for granted in every functional-analytic setting~\cite{Penz_2025Perspective_arXiv}. Only if $F(\rho) = F_\mathrm{DM}(\rho)$, obtaining a \emph{ground-state} density $\rho$ is actually equivalent to saturating the Fenchel--Young inequality $E(v) \leq F(\rho) + \langle v, \rho \rangle$ for a given $v$,
\begin{equation*}
	E(v) = F(\rho) + \langle v, \rho \rangle.
\end{equation*}
If, as remarked by \citet{Sutter_2024JPhysA}, the density space $X$ has a topology \emph{finer} than that of $L^1(\mathbb{R}^3)\cap L^3(\mathbb{R}^3)$, the set of lower semicontinuous functions gets larger and the statement $F(\rho) = F_\mathrm{DM}(\rho)$ still holds. However, in \cref{subsec:homogeneousSobolev} we will instead consider a space with coarser topology and thus need to prove lower semicontinuity of the functional separately (see \cref{subsec:kin-functional}).

In the KS formulation of DFT, one relies on the assumption that, for an interacting $N$-particle system subject to the external potential $v$ with a ground-state density $\rho$, there exist a \textit{non-interacting} system that reproduces the same ground-state density. The KS assumption is therefore that of the existence of a fictitious system described by the Hamiltonian
\begin{equation*}
	\hat{H}_\mathrm{KS} = \hat T +  \sum_{j=1}^N v_\mathrm{KS}(\rr_j) 
    = \sum_{j=1}^N  \qty( \hat{t}_{j} + v (\rr_j) + v_{\mathrm{H}}(\rr_j) + v_{\mathrm{xc}}(\rr_j))
\end{equation*}
where we denote by $\hat t_j = -\frac{1}{2}\nabla_j^2$ the one-particle kinetic-energy operator. Analogously to $F(\rho)$ from the interacting formulation above, we denote by $T: X \to \overline{\mathbb{R}}$ the Legendre--Fenchel conjugate of the energy functional using $\hat T$ instead of $\hat H$ in Eq.~\eqref{eq:EnergyFDM}, and by $T_\mathrm{DM}: X \to \overline{\mathbb{R}}$ and $T_\mathrm{LL}: X \to \overline{\mathbb{R}}$ the corresponding mixed-state and pure-state (kinetic-only) constrained-search functionals, respectively.

Within the study of DFT, and in particular KS-DFT, the $v$-representability problem is a central issue. It amounts to the fact that in the usual setting not every $N$-representable density $\rho\in\densSet$ can be computed from the solution of a Schrödinger equation with some potential $v\in X^*$~\cite{Englisch_1983}.
Therefore it would be of great interest to classify all $v$-representable densities that can be calculated from ground states for a given $v\in X^*$.
Although some progress has been made on the characterisation of $v$-representable sets~\cite{LaestadiusKS2014}, the problem was solved only on bounded one-dimensional domains \cite{Sutter_2024JPhysA,Corso2026}, and one thus usually relies on an assumption of $v$-representability for all considered densities. In order for the KS approach of DFT to work, one even needs to assume joint interacting and non-interacting $v$-representability. That is, one assumes that for a given density $\rho$ there exists a potential $v_\mathrm{ext}$ such that $\rho$ stems from the solution of an interacting Schrödinger equation and simultaneously a potential $v_\mathrm{KS}$ such that $\rho$ also stems from the solution of a non-interacting Schrödinger equation.
This issue of $v$-representability is avoided by switching to the Moreau--Yosida-regularised version of the universal functional considered in \cref{subsec:MYreg}.

\subsection{Banach Space Preliminaries}\label{subsec:BanachPrelims}
The duality mapping is the canonical map from a Banach space $X$ to its dual space $X^\ast$, i.e., it maps an element of $X$ to elements in $X^\ast$.
We will for the purpose of the inversion scheme make use of the \textit{normalised duality mapping}.
\begin{definition}\label{def:DualityMapping}
	By $J : X  \rightrightarrows X^\ast$ we denote the \textit{duality mapping},
	\begin{equation*}
		J(\rho) = \qty{ u \in X^\ast : \langle u, \rho \rangle = \norm{\rho}_X^2 = \norm{u}_{X^\ast}^2	}.
	\end{equation*}
\end{definition}
Here $\langle u, \cdot \rangle = u(\cdot)$ denotes the dual pairing of elements in $X$ with an element $u \in X^\ast$, a bounded linear functional on $X$. Note that if the dual space $X^\ast$ is strictly convex, then the duality mapping is single-valued and demicontinuous, i.e., continuous from $X$ to the dual $X^\ast$ endowed with the weak-$\ast$ topology~\cite[Thm.~1.2]{Barbu_2010}. In particular, uniform convexity of $X^\ast$ implies the uniform continuity of $J$ on every bounded subset of $X$. Moreover, it follows that $J$ is homogeneous for every real Banach space, but additive (and by homogeneity then linear) if and only if $X$ is a Hilbert space~\cite[Prop.~1.117]{Barbu-Precepanu}. Also, for Hilbert spaces, the duality mapping corresponds to the Riesz map.

Moreover, when studying the exact universal density functional, a generalised notion of differentiability is necessary as the universal density functional has been shown to be non-differentiable almost everywhere in the $L^1$-topology~\cite{Lammert2007}. Since the universal functional is convex, an appropriate generalised notion of differentiation is the subdifferential. 

\begin{definition}\label{def:Subdifferential}
	Let $f:X \to \overline{\mathbb{R}}$ be a proper, convex, and lower semicontinuous functional. Then the \textit{subdifferential} of $f$ at the point $\rho \in X$ is the set 
	\begin{equation*}
		\underline{\partial} f(\rho) = \qty{  u\in X^* \mid \forall\rho'\in X: f(\rho') \geq f(\rho)+\langle u,\rho'-\rho \rangle }.
	\end{equation*}
	An element $u\in \underline{\partial} f(\rho) $ is called a \textit{subderivative} of $f$ at $\rho$.
\end{definition}
In other words, the subdifferential is the set of all bounded tangent functionals of $f$ at $\rho$ that lie below $f(\rho)$.
In cases where $f$ is differentiable it simply holds $\underline{\partial} f(\rho) = \{ \rmr f(\rho) \}$.  Note that we write $\rmr f(\rho)$ for both the Gâteaux derivative at $\rho\in X$ (where $\rmr f(\rho)$ is assumed linear and bounded) and the Fréchet derivative at $\rho\in X$ (that additionally provides a \emph{uniform} first-order approximation by $\langle \rmr f(\rho),h \rangle$ in all directions $h\in X$)~\cite{Barbu-Precepanu}, depending on the available regularity of $f$.
Also note that the subdifferential $\underline{\partial} f$ of a proper, convex, and lower semicontinuous functional is \emph{maximally monotone}~\cite{rockafellar1970,Ismatov2025}, i.e., for all $\rho,\rho'\in X$ and any $u\in\underline{\partial} f(\rho)$, $u'\in\underline{\partial} f(\rho')$ it holds
\begin{equation*}
    \langle u-u',\rho-\rho' \rangle \geq 0
\end{equation*}
and the graph of $\underline{\partial} f$ cannot be extended without losing this monotonicity.

Furthermore, note that the continuous functional $\rho \mapsto \norm{\rho}_X^2$ is strictly convex if and only if $X$ is strictly convex itself.  
Generally, for Banach spaces it holds that $\underline{\partial}\tfrac{1}{2}\norm{\rho}_X^2 =  J(\rho)$~\cite[Ex.~2.32]{Barbu-Precepanu}, and if $X$ is smooth ($X^*$ strictly convex), the subdifferential of $\norm{\cdot}^2_X$ is a singleton. In such a setting we thus have $J(\rho)=\rmr\tfrac{1}{2}\norm{\rho}_X^2$.

\subsection{Moreau--Yosida Regularisation}\label{subsec:MYreg}
A crucial ingredient for the inversion algorithm of \cref{sec:InversionScheme} is the Moreau--Yosida regularisation of the universal functional. Generally, we define the Moreau--Yosida regularisation as follows.

\begin{definition}\label{def:MYRegularisation}
	Let $f : X \to \overline{\mathbb{R}}$ be a proper, convex, and lower semicontinuous functional. For $\eps > 0$, the \textit{Moreau--Yosida regularisation} of $f$ is 
    \begin{equation*}
    	f^\eps(\rho) = \inf_{\sigma \in X} \qty{ f(\sigma) + \frac{1}{2\eps} \norm{\sigma-\rho}^2_X}.
    \end{equation*}
\end{definition}

Let us next introduce the minimiser of the infimum in the Moreau--Yosida regularisation, and give some of its properties.

\begin{proposition}\label{prop:UniqueProx}
	Let $X$ be a reflexive and strictly convex Banach space, and $f:X \to \overline{\mathbb{R}} $ a proper, convex, and lower semicontinuous functional. Then for every $\rho \in X$ and $\eps>0$ there exists a unique minimiser  $\rho^\star \in X$ of $ f(\sigma) + \frac{1}{2\eps} \norm{\sigma-\rho}^2_X$.
\end{proposition}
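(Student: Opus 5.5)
We prove existence by the direct method of the calculus of variations and uniqueness by strict convexity. Fix $\rho\in X$ and $\eps>0$, and set
\begin{equation*}
\Phi(\sigma) = f(\sigma) + \frac{1}{2\eps}\norm{\sigma-\rho}_X^2 .
\end{equation*}
Since $f$ is proper, $\Phi$ takes a finite value somewhere, so $m=\inf_X\Phi<+\infty$. We then need $\Phi$ to be bounded below and coercive, weakly lower semicontinuous, and strictly convex.

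\textbf{Coercivity.} Since $f$ is proper, convex and l.s.c., the Hahn--Banach separation theorem applied to its closed convex epigraph gives an affine minorant: there are $u\in X^*$ and $c\in\mathbb{R}$ with $f(\sigma)\geq \langle u,\sigma\rangle + c$ for all $\sigma\in X$. Hence
\begin{equation*}
\Phi(\sigma)\geq c - \norm{u}_{X^*}\norm{\sigma}_X + \frac{1}{2\eps}\bigl(\norm{\sigma}_X-\norm{\rho}_X\bigr)^2 .
\end{equation*}
The right-hand side is a quadratic in $\norm{\sigma}_X$ with positive leading coefficient. So $\Phi$ is bounded below, $m>-\infty$, and $\Phi(\sigma)\to+\infty$ as $\norm{\sigma}_X\to\infty$. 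Consequently any minimising sequence $(\sigma_n)$ is bounded in $X$. This affine-minorant step is the only real obstacle, because $f$ itself need not be bounded below.

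\textbf{Existence.} $X$ is reflexive, so by Eberlein--\v{S}mulian the bounded sequence $(\sigma_n)$ has a weakly convergent subsequence $\sigma_{n_k}\weakto\rho^\star$. Both $f$ and $\sigma\mapsto\frac{1}{2\eps}\norm{\sigma-\rho}_X^2$ are convex and strongly l.s.c. (the latter is continuous). By Mazur's lemma, closed convex sets are weakly closed, so both functionals are weakly l.s.c. Therefore
\begin{equation*}
\Phi(\rho^\star)\leq\liminf_{k\to\infty}\Phi(\sigma_{n_k}) = m,
\end{equation*}
and $\rho^\star$ is a minimiser.

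\textbf{Uniqueness.} Since $X$ is strictly convex, $\sigma\mapsto\norm{\sigma-\rho}_X^2$ is strictly convex, as noted in \cref{subsec:BanachPrelims}. To verify this directly, take $\sigma_1\neq\sigma_2$. If $\norm{\sigma_1-\rho}_X\neq\norm{\sigma_2-\rho}_X$, strict convexity of $t\mapsto t^2$ together with the triangle inequality gives the strict inequality at the midpoint. If the two norms are equal, strict convexity of the norm gives $\norm{\tfrac12(\sigma_1+\sigma_2)-\rho}_X<\norm{\sigma_1-\rho}_X$. In either case, adding the convex $f$ leaves $\Phi$ strictly convex on its domain.

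Now suppose $\rho_1^\star\neq\rho_2^\star$ were both minimisers, so both have finite value $m$. Then $\Phi\bigl(\tfrac12(\rho_1^\star+\rho_2^\star)\bigr)<m$, which contradicts minimality. Hence the minimiser is unique.
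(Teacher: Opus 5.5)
Your proof is correct, but your existence argument takes a different route from the paper's. The paper does not use the direct method. It invokes the maximal monotonicity of $\underline\partial f$ together with the range theorem for maximal monotone operators on reflexive spaces \cite[Th.~1.141]{Barbu-Precepanu}. This gives a solution $\rho^\star$ of the inclusion $0\in\underline\partial f(\rho^\star)+\frac{1}{\eps}J(\rho^\star-\rho)$, and such a solution is a minimiser because $\underline\partial f+\underline\partial g\subseteq\underline\partial(f+g)$ always holds. Uniqueness is argued exactly as you do, via strict convexity of $\sigma\mapsto\norm{\sigma-\rho}_X^2$; the paper only asserts this, whereas you verify it.

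Your route is more elementary and self-contained. Existence needs only three ingredients: reflexivity, an affine minorant from Hahn--Banach for coercivity, and weak lower semicontinuity via Mazur. You need no appeal to Rockafellar's maximal-monotonicity theorem, the range condition for $\underline\partial f+\lambda J$, or properties of the duality mapping, and strict convexity of $X$ enters only in the uniqueness step.

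The paper's route buys something different. It produces the proximal point directly as a solution of the Euler--Lagrange inclusion, which is precisely the stationarity relation $\frac{1}{\eps}J(\rho-\rho^\eps)\in\underline\partial f(\rho^\eps)$ used later to define $v^\eps$ in the inversion scheme. With your approach you would recover that relation afterwards from the subdifferential sum rule, which applies because the squared-norm term is continuous on all of $X$.
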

\begin{proof}
	Since $X$ is reflexive and the subdifferential of $f$ is maximally monotone, the minimiser condition $0\in \underline\partial f(\rho^\star)+\frac{1}{\eps}J(\rho^\star-\rho)$ gives at least one solution~\cite[Th.~1.141]{Barbu-Precepanu}.
    By addition of the parabola $ \norm{\sigma-\rho}^2_X$ that is strictly convex (since $X$ is), the whole argument $f(\sigma) + \frac{1}{2\eps} \norm{\sigma-\rho}^2_X$ is strictly convex in $\sigma$ and thus gives exactly one minimiser.
\end{proof}
The existence of the optimiser $\rho^\star$ motivates the following definition. 
\begin{definition}\label{def:ProximalOperator}
	Let $X$ be a reflexive and strictly convex Banach space and $f:X \to \overline{\mathbb{R}}$ be a proper, convex, and lower semicontinuous functional. The \emph{proximal mapping} $\Pi_f^\eps : X \to X$  is defined as 
	\begin{equation*}
		\Pi_f^\eps(\rho) = \argmin_{\sigma\in X} \qty{f(\sigma) + \frac{1}{2\eps} \norm{\sigma-\rho}^2_X}
	\end{equation*}
    and $\Pi_f^\eps(\rho)$ is referred to as the proximal point or proximal density.
\end{definition}
Note that the proximal mapping (sometimes also called a proximal operator) is in general highly non-linear as the solution to a non-linear optimisation problem. The proximal mapping is also denoted  $\mathrm{prox}_{\eps f} (\rho)$ in the literature and we will later introduce the shorthand notation $\rho^\eps = \Pi_f^\eps(\rho_\mathrm{gs})$ when no confusion can arise due to the choice of argument $\rho_\mathrm{gs} \in X$ and functional $f: X \to \overline{\mathbb{R}}$.
Next, we summarise several useful properties of the regularisation. 
The proofs can be found in \cite[Thm.~2.58]{Barbu-Precepanu} and its preceding discussion, as well as collected in \cite[Lem.~1]{Penz_2023}. Point \ref{item:MY-prox-inequality} is proven in \cite[Thm.~2.9]{Barbu_2010}.

\begin{theorem}\label{thrm:MYRegProperties}
    Let $X$ be a strictly convex and reflexive Banach space with a strictly convex dual space $X^\ast$ and let $f:X \to \overline{\mathbb{R}}$ be a proper, convex, and lower semicontinuous functional. Then the following properties hold true:
    \begin{enumerate}[(i)]
        \item for all $\eps>\delta >0$ and all $\rho \in X$ it holds $f^\eps(\rho) \leq f^\delta(\rho) \leq f(\rho)$,
        \item\label{item:limit-f-eps} for all $\rho \in X$ it holds $\lim_{\eps \to 0^+} f^\eps(\rho) = f(\rho)$ pointwise from below,
        \item\label{item:MY-prox-inequality} for all $\rho \in X$ it holds $f(\Pi^\varepsilon_f(\rho)) \leq f^\eps(\rho) \leq f(\rho)$,
        \item\label{item:limit-prox} for all $\rho \in \mathrm{dom}(f)$ it holds $\lim_{\eps \to 0^+} \| \Pi^\varepsilon_f(\rho) - \rho \|_X = 0$,
        \item\label{item:Gateaux}
        $f^\eps$ is Gâteaux differentiable on $X$ and for all $\rho \in X$ it holds $\{\rmr f^\eps(\rho) \} = \underline{\partial} f^\eps(\rho)$,
        \item\label{item:Frechet}
        if $X$ is a Hilbert space then $f^\eps$ is Fréchet differentiable on $X$,\item\label{item:StationarityProximalPoint}
        for any $\rho,\rho^\eps\in X$ the relation $\rho^\eps = \Pi_f^\eps(\rho)$ is equivalent to $\rmr f^\eps(\rho)=\frac{1}{\eps}J(\rho-\rho^\eps)\in\underline\partial f(\rho^\eps)$, 
        \item\label{item:limit-pot} for each $\rho \in \mathrm{dom}(\underline{\partial}f)$ there exists an element $v \in \underline{\partial}f(\rho)$ such that $\rmr f^\eps(\rho) = v^\eps \rightharpoonup v$ (weakly) as $\eps \to 0^+$, where $v = \argmin \qty{\norm{u}_{X^\ast} : u \in \underline{\partial}f(\rho)}$ is the element in $\underline{\partial} f(\rho)$ with minimal norm,
        \item\label{item:limit-pot-strong} if $X^\ast$ is uniformly convex then it holds for the convergence in \ref{item:limit-pot} that
        $v^\eps \to v$
        (strongly) as $\eps \to 0^+$.
    \end{enumerate} 
\end{theorem}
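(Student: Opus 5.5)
Most of these properties are classical, so the plan is to derive them one after another from the definition of $f^\eps$, Proposition~\ref{prop:UniqueProx}, and the facts about $J$ in \cref{subsec:BanachPrelims}. Throughout I write $\rho^\eps=\Pi^\eps_f(\rho)$.

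For (i), choose $\sigma=\rho$ in the infimum; this gives $f^\delta(\rho)\le f(\rho)$. Since $\frac{1}{2\eps}\le\frac{1}{2\delta}$, the objective for $\eps$ lies pointwise below the objective for $\delta$, so $f^\eps\le f^\delta$. For (iii), the objective evaluated at $\rho^\eps$ equals $f^\eps(\rho)$. Because the penalty term is nonnegative, $f(\rho^\eps)\le f^\eps(\rho)$.

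For (iv), take $\rho\in\dom f$. Since $f$ is proper, convex and l.s.c., it admits an affine minorant $f\ge \langle u,\cdot\rangle+c$. Combining this with $f(\rho^\eps)+\frac{1}{2\eps}\|\rho^\eps-\rho\|^2\le f(\rho)$ gives a quadratic inequality in $\|\rho^\eps-\rho\|$. From it I get $\|\rho^\eps-\rho\|^2\le C\eps(1+\|\rho^\eps-\rho\|)$, so $\rho^\eps\to\rho$ strongly.

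For (ii), first note that monotonicity gives $\lim f^\eps(\rho)\le f(\rho)$. Weak l.s.c. of $f$ (it is convex and l.s.c.) together with $\rho^\eps\to\rho$ then yields $f(\rho)\le\liminf f(\rho^\eps)\le\liminf f^\eps(\rho)$. If $\rho\notin\dom f$, I argue by contradiction: suppose $f^\eps(\rho)$ stays bounded. The affine minorant forces $\|\rho^\eps-\rho\|\to0$, and then l.s.c. gives $f(\rho)<\infty$, which is impossible.

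For (vii), the optimality condition for the strictly convex problem is $0\in\underline\partial f(\rho^\eps)+\frac1\eps J(\rho^\eps-\rho)$. This uses the sum rule, which is valid because the norm term is continuous, and $\underline\partial\frac12\|\cdot\|^2=J$, which is single-valued since $X^*$ is strictly convex. Using oddness of $J$, this says $\frac1\eps J(\rho-\rho^\eps)\in\underline\partial f(\rho^\eps)$.

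For (v), convexity of $f^\eps$ is immediate because it is an infimal convolution of convex functions. To compute its subdifferential, compare $f^\eps(\rho')$ with the competitor $\sigma=\rho^\eps$. This gives $f^\eps(\rho')\le f(\rho^\eps)+\frac{1}{2\eps}\|\rho^\eps-\rho'\|^2$, and since $f^\eps(\rho)=f(\rho^\eps)+\frac{1}{2\eps}\|\rho^\eps-\rho\|^2$, $f^\eps$ is bounded above by a function that is Gâteaux differentiable at $\rho$ and touches it there. Because $f^\eps$ is also convex, its subdifferential at $\rho$ is therefore the singleton $\{\frac1\eps J(\rho-\rho^\eps)\}$. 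Continuity of $f^\eps$, being finite and convex, then upgrades this to Gâteaux differentiability. For (vi), in a Hilbert space $J$ is the Riesz map and $\Pi^\eps_f$ is nonexpansive, by monotonicity of $\underline\partial f$. This makes $\rmr f^\eps$ Lipschitz, which gives Fréchet differentiability.

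The main obstacle is (viii)--(ix). Let $\rho\in\dom\underline\partial f$ and let $v$ be the minimal-norm element of $\underline\partial f(\rho)$; it exists because the subdifferential is closed and convex and $X^*$ is reflexive, and it is unique by strict convexity of $X^*$. Apply monotonicity to $v^\eps\in\underline\partial f(\rho^\eps)$ and $v\in\underline\partial f(\rho)$. Using $\rho-\rho^\eps=\eps J^{-1}$-type relations and the defining identities of $J$, I would aim to show $\|v^\eps\|_{X^*}\le\|v\|_{X^*}$. This inequality is the crux, and I would try to derive it from $\langle v^\eps-v,\rho^\eps-\rho\rangle\ge0$ together with $\langle v^\eps,\rho-\rho^\eps\rangle=\eps\|v^\eps\|^2$ and Cauchy--Schwarz-type duality estimates.

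Once that bound holds, the family $v^\eps$ is bounded, so every sequence $\eps\to0$ has a weakly convergent subsequence. Since $\rho^\eps\to\rho$ strongly and the graph of a maximally monotone operator is strong--weak closed, every weak limit lies in $\underline\partial f(\rho)$. By weak l.s.c. of the norm, such a limit has norm at most $\|v\|$, so it must equal $v$ by uniqueness. Hence the whole family converges weakly to $v$. Finally, if $X^*$ is uniformly convex, weak convergence together with $\limsup\|v^\eps\|\le\|v\|$ gives strong convergence by the Radon--Riesz property, which is (ix).
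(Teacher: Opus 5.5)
Your proposal is correct, and it differs from the paper mainly in form. The paper gives no argument of its own for Theorem~\ref{thrm:MYRegProperties}; it defers to \cite[Thm.~2.58]{Barbu-Precepanu}, \cite[Lem.~1]{Penz_2023} and \cite[Thm.~2.9]{Barbu_2010}. There the results come from the general theory of maximally monotone operators: one works with the resolvent and the Yosida approximation $A_\eps\rho=\frac1\eps J(\rho-\rho^\eps)$ of $A=\underline\partial f$, identifies $\underline\partial f^\eps=A_\eps$, and uses the estimate $\|A_\eps\rho\|_{X^*}\le\|A^0\rho\|_{X^*}$ together with the convergence $A_\eps\rho\rightharpoonup A^0\rho$, where $A^0\rho$ is the minimal-norm element of $A\rho$.

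You instead work directly with the variational definition of $f^\eps$. You use only the subgradient inequality, the sum rule, an affine minorant of $f$, and the identities $\langle J(x),x\rangle=\|x\|_X^2=\|J(x)\|_{X^*}^2$. The cited route buys generality, since (viii)--(ix) hold for any maximally monotone operator and not only for subdifferentials. Yours is self-contained and shows where each hypothesis is used; for example, uniform convexity of $X^*$ enters only through the Radon--Riesz property in (ix).

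The step you flag as the crux closes with exactly the ingredients you list. Since $\|\rho-\rho^\eps\|_X=\eps\|v^\eps\|_{X^*}$, monotonicity gives
\[
\eps\|v^\eps\|_{X^*}^2=\langle v^\eps,\rho-\rho^\eps\rangle\le\langle v,\rho-\rho^\eps\rangle\le\|v\|_{X^*}\,\eps\|v^\eps\|_{X^*}.
\]
Hence $\|v^\eps\|_{X^*}\le\|v\|_{X^*}$, and as a by-product $\|\rho^\eps-\rho\|_X\le\eps\|v\|_{X^*}$.

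One small repair is needed in (v). A finite convex function on an infinite-dimensional Banach space need not be continuous, so "finite and convex" is not enough. You can justify continuity by local boundedness from above: $f^\eps(\rho')\le f(\sigma_0)+\frac{1}{2\eps}\|\sigma_0-\rho'\|_X^2$ for a fixed $\sigma_0\in\dom f$.

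Alternatively, avoid continuity by checking directly that $w=\frac1\eps J(\rho-\rho^\eps)\in\underline\partial f^\eps(\rho)$. For any $\rho',\sigma\in X$, add $f(\sigma)\ge f(\rho^\eps)+\langle w,\sigma-\rho^\eps\rangle$ to the subgradient inequality of $\frac{1}{2\eps}\|\cdot\|_X^2$ at $\rho-\rho^\eps$, evaluated at $\rho'-\sigma$. This gives $f(\sigma)+\frac{1}{2\eps}\|\rho'-\sigma\|_X^2\ge f^\eps(\rho)+\langle w,\rho'-\rho\rangle$, and taking the infimum over $\sigma$ yields the claim. Combined with your upper touching function, this gives $\underline\partial f^\eps(\rho)=\{w\}$. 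Gâteaux differentiability then follows by sandwiching the one-sided directional derivatives, with no continuity argument.
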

Points \ref{item:MY-prox-inequality} and especially \ref{item:limit-prox} show that the Moreau--Yosida regularisation can be viewed as a smooth approximation to a functional. Note that \ref{item:limit-prox} is just the statement of strong convergence $\Pi^\varepsilon_f(\rho) \to \rho$ as $\eps \to 0^+$.
Since every real Hilbert space is uniformly convex and uniformly smooth, we know from \ref{item:limit-pot-strong} that for real Hilbert spaces the $v^\varepsilon$ retrieved by the regularisation converges strongly to the element of $\underline{\partial}f(\rho)$ with minimal norm. This will be a crucial ingredient for the inversion scheme presented in \cref{sec:InversionScheme}.
Furthermore, the proximal mapping is \textit{firmly nonexpansive} on Hilbert spaces. The following result is a slight generalisation of \cite[Prop.~2.3~\&~4.2]{Bauschke_2017}.  

\begin{theorem}\label{thm:firmNE}
   Let $\mathcal{H}$ be a Hilbert space and suppose that $f: \mathcal{H} \to \overline{\mathbb{R}}$ is a proper, convex, and lower semicontinuous functional and $f^\eps: \mathcal{H} \to \overline{\mathbb{R}}$ is the corresponding Moreau--Yosida regularisation. Then for all $\eps>0$ the proximal mapping $\Pi_f^\eps : \mathcal{H} \to \mathcal{H}$ and $(\mathrm{Id} - \Pi^\varepsilon_f): \mathcal{H} \to \mathcal{H}$ are both firmly nonexpansive, i.e., for all $\rho,\sigma \in \mathcal{H}$
   \begin{align}\label{eq:firmNE}
       \norm{\Pi_f^\eps(\rho) - \Pi_f^\eps(\sigma)}_\mathcal{H}^2 &\leq \langle J(\rho - \sigma),  \Pi_f^\eps(\rho) - \Pi_f^\eps(\sigma)\rangle , \\
       \norm{[\mathrm{Id} - \Pi^\varepsilon_f](\rho) -[\mathrm{Id} - \Pi^\varepsilon_f](\sigma)}_\mathcal{H}^2 &\leq \langle J(\rho - \sigma),  [\mathrm{Id} - \Pi^\varepsilon_f](\rho) - [\mathrm{Id} - \Pi^\varepsilon_f](\sigma)\rangle  . \label{eq:firmNE2}
   \end{align}
\end{theorem}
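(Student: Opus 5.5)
The plan is to derive both inequalities from the optimality condition of the proximal mapping together with the monotonicity of the subdifferential, and then to obtain the second inequality from the first by linearity of $J$ in Hilbert spaces.

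Fix $\rho,\sigma\in\mathcal{H}$ and set $p=\Pi_f^\eps(\rho)$, $q=\Pi_f^\eps(\sigma)$. A Hilbert space is reflexive, strictly convex and has a strictly (indeed uniformly) convex dual. Hence Proposition~\ref{prop:UniqueProx} and Theorem~\ref{thrm:MYRegProperties} apply. By point~\ref{item:StationarityProximalPoint} of Theorem~\ref{thrm:MYRegProperties},
\begin{equation*}
\tfrac{1}{\eps}J(\rho-p)\in\underline\partial f(p),\qquad \tfrac{1}{\eps}J(\sigma-q)\in\underline\partial f(q).
\end{equation*}
The subdifferential is monotone, so
\begin{equation*}
\langle J(\rho-p)-J(\sigma-q),\,p-q\rangle\geq 0
\end{equation*}
after multiplying by $\eps>0$. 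In a Hilbert space $J$ is the (linear) Riesz map. The left-hand side therefore equals $\langle J(\rho-\sigma),p-q\rangle-\langle J(p-q),p-q\rangle$. Since $\langle J(p-q),p-q\rangle=\norm{p-q}_\mathcal{H}^2$ by Definition~\ref{def:DualityMapping}, this gives exactly \cref{eq:firmNE}.

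For \cref{eq:firmNE2}, write $a=\rho-\sigma$ and $b=p-q$, so that $[\mathrm{Id}-\Pi_f^\eps](\rho)-[\mathrm{Id}-\Pi_f^\eps](\sigma)=a-b$. Using linearity and symmetry of the Riesz pairing, $\langle Ja,b\rangle=\langle Jb,a\rangle$, we expand
\begin{equation*}
\norm{a-b}^2=\langle Ja,a\rangle-2\langle Ja,b\rangle+\norm{b}^2 .
\end{equation*}
The claim $\norm{a-b}^2\le\langle Ja,a-b\rangle=\norm{a}^2-\langle Ja,b\rangle$ is then equivalent to $\norm{b}^2\le\langle Ja,b\rangle$, which is \cref{eq:firmNE}.

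The only step needing care is the identification of the optimality condition. One must confirm that point~\ref{item:StationarityProximalPoint} applies for every $\rho\in\mathcal{H}$, not only on $\dom f$. This holds because Proposition~\ref{prop:UniqueProx} gives $p\in\dom f$ for every $\rho$. Otherwise one can derive the inclusion directly from the sum rule, since the quadratic term is continuous and everywhere finite. One must also use that $J$ is linear and self-adjoint in the Hilbert case, which was noted after Definition~\ref{def:DualityMapping}. Everything else is algebra.
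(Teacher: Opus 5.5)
Your proof is correct and takes the same route as the paper. In both, the optimality condition from Theorem~\ref{thrm:MYRegProperties}\ref{item:StationarityProximalPoint}, monotonicity of $\underline\partial f$, and linearity and symmetry of $J$ reduce everything to the single inequality $\langle J(a-b),b\rangle\geq 0$ with $a=\rho-\sigma$, $b=\Pi_f^\eps(\rho)-\Pi_f^\eps(\sigma)$. You rearrange it into \cref{eq:firmNE}, which the paper only cites from \cite{Herbst_2025}, and then deduce \cref{eq:firmNE2} algebraically, whereas the paper rearranges it directly into \cref{eq:firmNE2}.
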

\begin{proof}
    The first part of the statement can be found in \cite[Sec.~II.B]{Herbst_2025}\footnote{Note, however, that the proof in \cite{Herbst_2025} contains a typo, the subdifferential appearing in the proof should be $\underline \partial \mathcal{F}$ in the notation employed there.}.
    For the second part, we write $\rho^\eps = \Pi_f^\eps(\rho)$ and $\sigma^\eps = \Pi_f^\eps(\sigma)$, and use that $\tfrac{1}{\eps} J(\rho-\rho^\eps)\in\underline\partial f(\rho^\eps)$ from Theorem~\ref{thrm:MYRegProperties}\ref{item:StationarityProximalPoint}. It then follows with $A(\rho)=\rho-\rho^\eps$ 
    \[ \eps \left( \underline{\partial} f(\rho^\eps) - \underline{\partial} f(\sigma^\eps) \right) \ni J(A(\rho)) - J(A(\sigma) ) = J(A(\rho) -A(\sigma)). \]
    In the last step we have used the linearity of the duality map $J$. We then have
    by taking the dual pairing with $\rho^\eps - \sigma^\eps$   
    and using the  maximal monotonicity of the subdifferential
    \begin{equation*}
        0 \leq \langle  J(A(\rho) -A(\sigma)), \rho^\eps - \sigma^\eps \rangle
        = - \Vert A(\rho) -A(\sigma) \Vert_\mathcal{H}^2 + \langle  J(A(\rho) -A(\sigma)), \rho - \sigma \rangle .
    \end{equation*}
    This concludes the proof for the second part of the statement.
\end{proof}

Furthermore, by a direct application of the Cauchy--Schwarz inequality to \cref{eq:firmNE,eq:firmNE2} from Theorem~\ref{thm:firmNE}, it follows that both $\Pi^\varepsilon_f$ and $\mathrm{Id} - \Pi^\varepsilon_f$ are non-expansive (Lipschitz-continuous with constant 1), i.e., for all $\rho,\sigma \in \mathcal{H}$
\begin{equation*}
    \norm{\Pi_f^\eps(\rho) - \Pi_f^\eps(\sigma)}_\mathcal{H} \leq \norm{\rho - \sigma}_\mathcal{H},
    \quad \text{and} \quad 
    \norm{(\mathrm{Id} - \Pi_f^\eps)(\rho) - (\mathrm{Id} - \Pi_f^\eps)(\sigma)}_\mathcal{H} \leq \norm{\rho - \sigma}_\mathcal{H}.
\end{equation*}
This property will be of importance for the derivation of the error bounds for the Moreau--Yosida-based inversion scheme in the next section.

\subsection{Periodic Homogeneous Sobolev Spaces}\label{subsec:homogeneousSobolev}
Since we are interested in a density-potential inversion scheme for periodic materials, let us restrict ourselves to periodic systems on $\mathbb{R}^3$. We take a lattice $\mathcal{R}$ in $\mathbb{R}^3$ with reciprocal lattice $\mathcal{R}^*$ and unit cell $\Omega$~\cite{AshcroftMermin}. Recall, that a function $u\in L^2_\mathrm{per}(\Omega)$ that is periodic, i.e., translationally invariant under all lattice vectors $\RR\in\mathcal{R}$, can be expanded in a basis of plane waves $e_\GG(\rr)$ as 
\begin{equation}\label{eq:FourierRepresentation}
     u(\rr) = \sum_{\GG \in \mathcal{R}^\ast} \hat{u}_\GG e_\GG(\rr).
\end{equation}
Explicitly, the Fourier coefficients $\hat{u}_\GG$ and the Fourier basis functions $e_\GG(\rr)$ are given by
\begin{equation*}
    \hat{u}_\GG = \langle e_\GG,u \rangle = \int_\Omega  \overline{e_\GG}(\rr) u(\rr)  \dd{\mathbf{r}} , \quad e_\GG(\rr) =  \frac{1}{\sqrt{\abs{\Omega}}} \rme^{\rmi\GG\cdot \rr}.
\end{equation*}
The family $\qty{e_\GG}_{\GG \in \mathcal{R}^\ast}$ forms an orthonormal basis for the Hilbert space $L^2_\mathrm{per}(\Omega)$ of real, square-integrable functions.

We will turn our attention to the periodic variant of the usual Sobolev spaces $H^s(\Omega)$ that are also Hilbert spaces.
Sobolev spaces are chosen in order to penalise rapid oscillations, which will be beneficial for an inversion scheme targeting the exchange-correlation potential.
The \textit{periodic Sobolev spaces}~\cite{iorio2001fourier,Robinson2001-book} will, for any real $s \geq 0$, be defined in terms of their Fourier coefficients as
\begin{equation*}
	H^s_\mathrm{per}(\Omega) = \qty{ u : u(\rr) = \sum_{\GG \in \mathcal{R}^\ast} \hat{u}_\GG e_\GG(\rr), \overline{\hat{u}_\GG}=\hat{u}_{-\GG}, \sum_{\GG \in \mathcal{R}^\ast}\qty(1+\abs{\GG}^2)^s \abs{\hat{u}_\GG}^2 < \infty }.
\end{equation*}
Here the condition $\overline{\hat{u}_\GG}=\hat{u}_{-\GG}$ for the coefficients guarantees that the resulting function $u(\rr)$ is real. This definition can be generalised to $s<0$ and then includes distributions. The space $H^{-s}_\mathrm{per}(\Omega)$ is then exactly the topological dual to $H^s_\mathrm{per}(\Omega)$.
A particular extension of the Sobolev space with $s=1$ that will be useful for this work is the \textit{homogeneous Sobolev space}~\cite{mazya-book,galdi-book,ortner2012} of real, periodic functions.
\begin{equation}\label{eq:H1perhom}
    \Hperhom(\Omega) =\left. \qty{ u : u(\rr) = \sum_{\GG \in \mathcal{R}^\ast} \hat{u}_\GG e_\GG(\rr), \overline{\hat{u}_\GG}=\hat{u}_{-\GG}, \sum_{\GG \in \mathcal{R}^\ast}\abs{\GG}^{2}\abs{\hat{u}_\GG}^2 < \infty	} \middle/ \mathbb{R} \right. .
\end{equation}
The space is naturally equipped with the following norm that admits an equivalent position-space formulation using the weak derivative and the $L^2$-norm (we always take any representative $u\in[u]$ from the equivalence classes within the space and will later fix ourselves to the gauge $\hat u_{\GG=0}=0$),
\begin{equation*}
    \norm{u}_{\Hperhom} = \sqrt{\sum_{\GG \in \mathcal{R}^\ast}\abs{\GG}^{2} \abs{\hat{u}_\GG}^2} = \left( \int_\Omega \abs{\nabla u(\rr)}^2 \dd{\rr} \right)^{1/2}.
\end{equation*}
It is further a Hilbert space with inner product
\begin{equation*}
    \langle u,v \rangle_{\Hperhom} = \sum_{\GG \in \mathcal{R}^\ast}\abs{\GG}^{2} \overline{\hat{u}_\GG}\hat{v}_\GG = \int_\Omega \nabla u(\rr) \cdot \nabla v(\rr) \dd{\rr}.
\end{equation*}
We note that the zero Fourier coefficient $\GG=0$ does not contribute to the norm and thus the elements in $\Hperhom(\Omega)$ are only defined up to a constant shift. In order to still have a well-defined normed space where the condition
\begin{equation*}
    \|u\|_{\Hperhom}=0    
\end{equation*}
gives the unique zero element, the homogeneous Sobolev space had to be defined \emph{modulo} real numbers in \cref{eq:H1perhom}. This means the elements of $\Hperhom(\Omega)$ are equivalence classes of functions that all differ by a constant. This construction is ideally suited for the representation of potentials in our theory, that are also only defined up to a constant, which manifests the gauge freedom in energy measurements.

Next, we introduce the dual space
\begin{equation*}
    \Hperhomdual(\Omega)=(\Hperhom(\Omega))^*
\end{equation*}
that corresponds to the set of all continuous linear functionals on $\Hperhom(\Omega)$. We write the application of $f \in \Hperhomdual(\Omega)$ to a function $u \in \Hperhom(\Omega)$ in the form of the dual pairing $\langle u, f \rangle$ that gets evaluated as
\begin{equation}\label{eq:dual-pairing}
    f(u) = \langle u, f \rangle = \sum_{\GG \in \mathcal{R}^\ast} \overline{\hat{u}_\GG} \hat{f}_\GG.
\end{equation}
This yields the dual norm
\begin{equation}\label{eq:dual-norm}
    \norm{f}_{\Hperhomdual} = \sqrt{\sum_{\GG \neq 0} \frac{ |\hat{f}_\GG|^2 }{ \abs{\GG}^{2} }}
\end{equation}
and every $f\in\Hperhomdual(\Omega)$ is uniquely defined by a list of Fourier coefficients $\hat{f}_\GG$ that give a finite sum above. Note that the elements, as continuous linear functionals acting on a function space, have distributional character. This means that the Fourier series of a general $f\in\Hperhomdual(\Omega)$ does not necessarily converge to an integrable $f\in L^1(\Omega)$.
Like in the $\Hperhom(\Omega)$ norm, the Fourier coefficient corresponding to $\GG=0$ does not contribute in \cref{eq:dual-norm}, because we necessarily have $\hat f_{\GG=0}=0$ for every $f\in\Hperhomdual(\Omega)$. This must be the case because the $u\in\Hperhom(\Omega)$ are only defined up to a constant, which further implies that for any $c\in\mathbb{R}$ one has $\langle u+c,f \rangle = \langle u,f \rangle + c\langle 1,f \rangle = \langle u,f \rangle$ and thus $\langle 1,f \rangle=0$. By \cref{eq:dual-pairing} this means
\begin{equation*}
    \langle 1, f \rangle = \hat{f}_{\GG=0} = 0
\end{equation*}
for every $f\in\Hperhomdual(\Omega)$.
That elements in $\Hperhom(\Omega)$ are only defined up to a constant shift thus translates to the condition $\hat f_{\GG=0}=0$ on the dual side for elements in $\Hperhomdual(\Omega)$, which fits the definition of the norm in \cref{eq:dual-norm}.

The duality mapping $J:\Hperhomdual(\Omega) \to \Hperhom(\Omega)$ from Definition~\ref{def:DualityMapping} is given by 
\begin{equation}\label{def:duality-map}
    J(f)(\rr) 
    = \sum_{\GG \neq 0} \frac{\hat{f}_\GG}{\abs{\GG}^{2}} e_\GG(\rr)
    = \int_{\mathbb R^3} \frac{f(\mathbf{r}')}{4\pi\abs{\mathbf{r} - \mathbf{r}'}} \dd{\mathbf{r}'},
\end{equation}
where the integral over $\mathbb R^3$ holds for locally integrable $f$ and is meant to be taken over the periodic repetition of the unit cell $\Omega$.
The inverse duality mapping $J^{-1} : \Hperhom(\Omega) \to \Hperhomdual(\Omega)$ on the other hand is linked to the Poisson equation,
\begin{equation}\label{def:duality-map-inv}
    J^{-1}(v)(\rr) = \sum_{\GG\neq 0} \abs{\GG}^2 \hat{v}_\GG e_\GG(\rr) = -\nabla^2v(\rr),
\end{equation}
which means $J: f\mapsto v$ actually yields a solution to the Poisson equation under a compensating background charge.

The introduced spaces will be of particular interest in the following, since they provide the spaces for potentials and densities in the studied periodic setting. Here, $\Hperhom(\Omega)$ takes the role of potential space (potentials are only defined up to a constant), and by duality $\Hperhomdual(\Omega)$ takes the role of the density space.
In order not to have to work with equivalence classes, we will always select the unique representative that has $\hat{u}_{\GG=0}=0$ for any potential.
On the side of the densities an issue remains in that the condition $\hat f_{\GG=0}=0$ translates to $\int_\Omega f(\rr) \dd{\rr} = 0$ for integrable $f\in\Hperhomdual(\Omega)\cap L^1(\Omega)$. Yet, one usually requires that densities are normalised to the total number of particles $N$, e.g., in \cref{eq:def:rho}. This can simply be solved by adding the constant value $N/|\Omega|$ to every element of $\Hperhomdual(\Omega)$ which yields an affine space of densities that all integrate to $N$.
The periodic homogeneous Sobolev space and its dual will now become the spaces for formulating DFT in a periodic setting.

\begin{definition}\label{def:spaces}
Based on the Hilbert space
\begin{equation*}
    \Xdens = \Hperhomdual(\Omega)
    \;\text{with norm}\; \|f\|_\Xdens = \left\|f\right\|_{\Hperhomdual}= \sqrt{\sum_{\GG \neq 0} \frac{ |\hat{f}_\GG|^2 }{ \abs{\GG}^{2} }}
\end{equation*}
we define the affine space for densities
\begin{equation*}
    \Xdensaff = \left\{ f+\frac{N}{|\Omega|} : f\in\Xdens \right\} \;\text{with induced norm}\; \|\rho\|_{\Xdensaff} = \left\|\rho-\frac{N}{|\Omega|}\right\|_{\Xdens}= \sqrt{\sum_{\GG \neq 0} \frac{ |\hat{\rho}_\GG|^2 }{ \abs{\GG}^{2} }}.
\end{equation*}
For potentials, as the dual space, we fix the $\hat v_{\GG=0}=0$ gauge for its elements and define
\begin{equation*}
    \Xpot = \{ v : v\in[v]\in\Hperhom(\Omega), \hat v_{\GG=0}=0 \}
    \;\text{with norm}\; \|v\|_{\Xpot} = \|v\|_{\Hperhom} = \sqrt{\sum_{\GG \neq 0} \abs{\GG}^{2}|\hat{v}_\GG|^2}.
\end{equation*}
The definitions of the duality mapping and its inverse in \cref{def:duality-map,def:duality-map-inv} naturally extend to those spaces since the $\GG=0$ component is not considered.
Often we will encounter differences of densities $\rho,\rho'\in\Xdensaff$, in which case we naturally have $\rho-\rho'\in\Xdens$.
\end{definition}

We will conclude this section with some important results that relate the given spaces to the DFT setting and the Hamiltonian. First we discuss the mean-field approximation to the interaction energy of the particles in one unit cell that can be given purely in terms of the density norm and is called the Hartree energy. We then show that our new density space $\Xdensaff$ includes all previous densities from DFT and that all potentials from $\Xpot$ lead to a well-defined Hamiltonian that is self-adjoint and bounded below. This demonstrates that the spaces $\Xdensaff$ and $\Xpot$ seamlessly integrate into a periodic DFT setting.

As already noted in \cref{def:duality-map-inv}, the duality mapping yields the solution to the Poisson equation of electrostatics. The Hartree energy, defined as the mean-field electrostatic energy of a density, is then
\begin{equation}\label{eq:HartreeEnergy}
     E_\mathrm{H}(\rho) = \frac{1}{2}\langle J(\rho),\rho \rangle.
\end{equation}
Here, the prefactor is introduced to avoid double-counting. The Hartree energy is thus related to the norm on the chosen density space as given by the following proposition.  
\begin{proposition}\label{prop:EH}
    The Hartree energy is
    $E_\mathrm{H}(\rho) = \frac{1}{2}\|\rho\|_{\Xdensaff}^2$.
\end{proposition}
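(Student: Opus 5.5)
The identity follows by inserting the Fourier representations of $J$ and of the dual pairing into the definition \cref{eq:HartreeEnergy}, reading $J(\rho)$ as $J$ applied to the zero-mean part of $\rho$.

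First I fix how $J(\rho)$ is interpreted for $\rho\in\Xdensaff$. By Definition~\ref{def:spaces}, $J$ extends to the affine space because the $\GG=0$ component is ignored. So $J(\rho)=J(\rho-N/|\Omega|)$, where $f=\rho-N/|\Omega|\in\Xdens$ has $\hat f_\GG=\hat\rho_\GG$ for all $\GG\neq0$. By \cref{def:duality-map},
\[
J(\rho)=\sum_{\GG\neq0}\frac{\hat\rho_\GG}{|\GG|^2}e_\GG ,
\]
which is an element of $\Xpot$ with vanishing zero coefficient.

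Next I evaluate the pairing $\langle J(\rho),\rho\rangle$ using \cref{eq:dual-pairing}. The constant part $N/|\Omega|$ of $\rho$ only enters the $\GG=0$ coefficient. Since $\widehat{J(\rho)}_{\GG=0}=0$ in the fixed gauge, that term does not contribute, so the pairing is well defined on the affine space and equals $\langle J(f),f\rangle$. This gives
\[
\langle J(\rho),\rho\rangle=\sum_{\GG\neq0}\overline{\left(\frac{\hat\rho_\GG}{|\GG|^2}\right)}\hat\rho_\GG=\sum_{\GG\neq0}\frac{|\hat\rho_\GG|^2}{|\GG|^2}.
\]
I use that $|\GG|^2$ is real. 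This sum is exactly $\|\rho\|_{\Xdensaff}^2$ by Definition~\ref{def:spaces}.

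Multiplying by $\frac12$ gives the claim. Equivalently, since $J$ is the Riesz/duality map, $\langle J(f),f\rangle=\|f\|_\Xdens^2$ follows directly from Definition~\ref{def:DualityMapping}.

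The only point that needs care is the role of the constant background $N/|\Omega|$. I must argue that it is killed in the pairing because potentials are taken in the $\hat v_{\GG=0}=0$ gauge, equivalently because $J$ and the norm ignore the zero mode. Once that is clear, the rest is a one-line Fourier computation, with convergence guaranteed because $\rho\in\Xdensaff$ makes the series finite.
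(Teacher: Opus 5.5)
Your proposal is correct and takes essentially the paper's route: write $\rho = f + N/|\Omega|$, observe that $J$ and the pairing ignore the $\GG=0$ mode so that $\langle J(\rho),\rho\rangle=\langle J(f),f\rangle$, and identify this with $\|f\|_{\Xdens}^2=\|\rho\|_{\Xdensaff}^2$. The paper obtains the last equality directly from the duality-map identity $\langle J(f),f\rangle=\|f\|_{\Xdens}^2$, while you write it out as the explicit Fourier sum $\sum_{\GG\neq0}|\hat\rho_\GG|^2/|\GG|^2$; this is the same computation.
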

\begin{proof}
    For $\rho \in \Xdensaff$, recall that $\rho=f+N/|\Omega|$ where $f \in \Xdens$ has zero mean. From \cref{def:duality-map} it follows that   $\langle J(\rho), \rho \rangle = \langle J(f), f \rangle$, and \cref{eq:HartreeEnergy} implies that
    \begin{equation*}
        E_\mathrm{H}(\rho)  = \frac{1}{2}\langle J(f),f \rangle = \frac{1}{2}\|f\|_{\Xdens}^2 = \frac{1}{2}\|\rho\|_{\Xdensaff}^2
    \end{equation*}
\end{proof}
Note that by transitioning from $\rho$ to $f$, the divergent long-range contributions of the interaction energy are removed. By the definition of $J$ on $\Xdensaff$ as an extension from $\Xdens$ that removes the $\GG=0$ component, these long-range contributions are automatically taken care of.

\begin{lemma}\label{lemma:densSet-in-Xdensaff}
    The set $\densSet$ of $N$-representable densities is a subset of $\Xdensaff$.
\end{lemma}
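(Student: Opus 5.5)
Given $\rho\in\densSet$, we must show that $f:=\rho-N/|\Omega|$ lies in $\Xdens=\Hperhomdual(\Omega)$, i.e.\ that $\sum_{\GG\neq0}|\hat\rho_\GG|^2/|\GG|^2<\infty$. In the periodic setting $\rho\in L^1_\mathrm{per}(\Omega)$ with $\sqrt\rho\in H^1_\mathrm{per}(\Omega)$ and $\int_\Omega\rho=N$. The plan is to obtain a strong integrability statement for $\rho$ and then compare the weight $|\GG|^{-2}$ against a summable bound.

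The first step is to note that $\sqrt\rho\in H^1_\mathrm{per}(\Omega)\subset L^2_\mathrm{per}(\Omega)$, and that on the bounded three-dimensional cell the Sobolev embedding gives $\sqrt\rho\in L^6(\Omega)$, hence $\rho\in L^3(\Omega)\subset L^2(\Omega)$. Even without Sobolev, a simpler route suffices: $\rho\in L^1(\Omega)$ already gives $|\hat\rho_\GG|\le|\Omega|^{-1/2}\|\rho\|_{L^1}=N|\Omega|^{-1/2}$. This bound alone is not enough in three dimensions, however, because $\sum_{\GG\neq0}|\GG|^{-2}$ diverges over a three-dimensional lattice.

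So I will use $\rho\in L^2(\Omega)$. By Parseval, $\sum_\GG|\hat\rho_\GG|^2=\|\rho\|_{L^2}^2<\infty$. The reciprocal lattice has a minimal nonzero length $g_0=\min_{\GG\neq0}|\GG|>0$, so
\begin{equation*}
\sum_{\GG\neq0}\frac{|\hat\rho_\GG|^2}{|\GG|^2}\le g_0^{-2}\|\rho\|_{L^2(\Omega)}^2<\infty .
\end{equation*}
Reality of $\rho$ gives $\overline{\hat\rho_\GG}=\hat\rho_{-\GG}$. Subtracting the constant $N/|\Omega|$ changes only the $\GG=0$ coefficient, which it sets to $\hat\rho_0-N|\Omega|^{-1/2}=0$. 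Hence $f\in\Xdens$ and $\rho=f+N/|\Omega|\in\Xdensaff$.

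The main obstacle is justifying $\rho\in L^2$, and in particular the Sobolev embedding $H^1_\mathrm{per}\hookrightarrow L^6$ (or at least $L^4$) on the torus, which is where the dimension $d=3$ enters. I would cite it as the standard periodic Sobolev inequality: it follows from the embedding on $\mathbb{R}^3$ via a cutoff, or from the Fourier characterisation, where $s=1>d/4$ guarantees $H^1\subset L^4$ for $d\le3$. Then $\|\rho\|_{L^2}=\|\sqrt\rho\|_{L^4}^2$ completes the argument.
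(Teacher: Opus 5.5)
Your proof is correct, but it takes a different route from the paper's. The paper argues by duality. It quotes Lieb's result $\densSet\subseteq L^3$ and uses boundedness of $\Omega$ to pass to $L^{6/5}_\mathrm{per}(\Omega)$. It then invokes the homogeneous Sobolev embedding $\Xpot\hookrightarrow L^6_\mathrm{per}(\Omega)$, so that by H\"older the zero-mean part $\rho-N/|\Omega|$ is a bounded functional on $\Xpot$, i.e.\ an element of $\Xdens$. You stay on the Fourier side instead. You ask more of the density ($\rho\in L^2$, via $H^1_\mathrm{per}\hookrightarrow L^4$), and in exchange you need only the elementary spectral-gap (Poincar\'e) bound $\|f\|_{\Xdens}\le g_0^{-1}\|f\|_{L^2}$ for zero-mean $f$, which follows directly from Parseval and $\min_{\GG\neq0}|\GG|>0$. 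Your version is more self-contained and gives an explicit constant. It also handles the mean explicitly through $\hat\rho_{\GG=0}=N|\Omega|^{-1/2}$, whereas the paper's phrase $L^{6/5}_\mathrm{per}(\Omega)\subseteq\Xdens$ tacitly means zero-mean functions. The paper's version needs only $\rho\in L^{6/5}$. Its key ingredient, $\|v\|_{L^6}\le C\|v\|_{\Xpot}$, costs nothing extra in context, since it is reused in the KLMN lemma and in the proof of Lemma~\ref{lemma:G-lsc}. One wording point: you call the $L^1$ estimate a route that ``suffices'' and then rightly say it fails in three dimensions. In fact it works only for $d=1$ (e.g.\ the Gross--Pitaevskii example), because $\sum_{\GG\neq0}|\GG|^{-2}$ already diverges for $d=2$.
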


\begin{proof}
    \citet{Lieb1983} demonstrated that $\densSet \subseteq L^1(\mathbb{R}^3)\cap L^3(\mathbb{R}^3)$ which also holds for a periodic setting with bounded domain $\Omega$ as $\densSet \subseteq L^3_\mathrm{per}(\Omega) \subseteq L^{6/5}_\mathrm{per}(\Omega)\subseteq L^1_\mathrm{per}(\Omega)$ and $L^1_\mathrm{per}(\Omega)\cap L^3_\mathrm{per}(\Omega) = L^3_\mathrm{per}(\Omega)$. Since $\Xpot\subseteq L^6_\mathrm{per}(\Omega)$ (\cite[Thm.~2.2(i)]{ortner2012}; the result on $\mathbb{R}^3$ also holds on the periodic $\Omega$ by simple restriction) which entails $L^{6/5}_\mathrm{per}(\Omega)\subseteq\Xdens$  and since all elements in $\densSet$ integrate to $N$, we have the desired inclusion in $\Xdensaff$.
\end{proof}

\begin{lemma}
    For any $v\in\Xpot$ the Hamiltonians $\hat{H} = \hat{T} + \hat{W} + \hat{V}$ and $\hat{H}_\mathrm{KS} = \hat{T} + \hat{V}$ with $\hat{V} = \sum_{j=1}^N v(\rr_j)$ both fulfil the conditions for the KLMN theorem and are thus self-adjoint and bounded below on the form domain $Q(\hat T)$.
\end{lemma}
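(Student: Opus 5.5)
The plan is to verify the KLMN hypothesis directly: for every $\eta>0$ there is $C_\eta$ with
\begin{equation*}
|\langle \psi, \hat V\psi\rangle| \le \eta \langle \psi,\hat T\psi\rangle + C_\eta\|\psi\|^2 \quad\text{for all } \psi\in Q(\hat T).
\end{equation*}
This says $\hat V$ is infinitesimally form-bounded with respect to $\hat T$. Since $\hat W$ is assumed by the standing convention to already satisfy the KLMN condition (e.g.\ periodic Coulomb, which is form-bounded with arbitrarily small relative bound), the case of $\hat H$ then follows from that of $\hat H_\mathrm{KS}$. Indeed, a sum of two forms each with relative bound less than $1/2$ still has relative bound less than $1$.

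The key reduction is to one-particle estimates via the density. For $\psi\in Q(\hat T)$ with density $\rho_\psi$ we have $\langle\psi,\hat V\psi\rangle=\int_\Omega v\,\rho_\psi\,\mathrm{d}\rr$. Since the representative has $\hat v_{\GG=0}=0$, we may write this as $\langle v,\rho_\psi - N\|\psi\|^2/|\Omega|\rangle$; however, it is more robust to bound it through $L^p$ norms. By the Sobolev embedding $\Xpot\subseteq L^6_\mathrm{per}(\Omega)$ used in Lemma~\ref{lemma:densSet-in-Xdensaff}, $v\in L^6_\mathrm{per}(\Omega)$ with $\|v\|_{L^6}\le C\|v\|_{\Xpot}$. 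We then split $v=v_1+v_2$ with $v_2\in L^\infty$ and $\|v_1\|_{L^6}<\delta$, for instance by truncating at a large level $M$ (dominated convergence in $L^6$). The bounded part gives $|\int v_2\rho_\psi|\le \|v_2\|_\infty N\|\psi\|^2$.

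For $v_1$, Hölder gives $|\int v_1\rho_\psi|\le \|v_1\|_{L^6}\|\rho_\psi\|_{L^{6/5}}$, and $\|\rho_\psi\|_{L^{6/5}}\le |\Omega|^{1/2}\|\rho_\psi\|_{L^1}^{1/2}\|\rho_\psi\|_{L^3}^{1/2}$ by interpolation and boundedness of $\Omega$. The periodic Hoffmann-Ostenhof/Lieb estimate, together with Sobolev on the torus, gives
\begin{equation*}
\|\rho_\psi\|_{L^3}\le C\big(\|\nabla\sqrt{\rho_\psi}\|_{L^2}^2+\|\rho_\psi\|_{L^1}\big)\le C\big(\langle\psi,\hat T\psi\rangle+N\|\psi\|^2\big).
\end{equation*}
Using $ab\le a^2+b^2$, we obtain
\begin{equation*}
\|v_1\|_{L^6}\|\rho_\psi\|_{L^{6/5}}\le C\delta\big(\langle\psi,\hat T\psi\rangle + N\|\psi\|^2\big).
\end{equation*}
Choosing $\delta$ small yields relative bound $\eta$, so KLMN applies and gives self-adjoint, bounded-below operators with form domain $Q(\hat T)$.

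The main obstacle is the periodic version of the Sobolev-type inequality $\|\rho\|_{L^3}\lesssim \|\nabla\sqrt\rho\|_2^2+\|\rho\|_1$ on the unit cell. The homogeneous estimate from $\mathbb{R}^3$ must be corrected by the mean (the $L^1$ term), and the Hoffmann-Ostenhof inequality $\|\nabla\sqrt{\rho_\psi}\|^2\le \langle\psi,\hat T\psi\rangle$ (up to the constant $2$ from the $\tfrac12$ in $\hat T$) must be justified for periodic boundary conditions. I would handle the first by applying the inhomogeneous Sobolev embedding $H^1_\mathrm{per}\subset L^6_\mathrm{per}$ to $\sqrt\rho$. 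I would handle the second by noting that the Cauchy–Schwarz proof of Hoffmann-Ostenhof is local and carries over verbatim.
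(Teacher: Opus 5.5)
Your argument is correct, but it takes a different route from the paper. The paper's proof is essentially a citation: it notes $\Xpot\subseteq L^6_\mathrm{per}(\Omega)$, hence (since $\Omega$ is bounded) $\Xpot\subseteq L^2_\mathrm{per}(\Omega)$, which falls under the standard Kato--Rellich/KLMN results of Reed--Simon (Thm.~X.16/X.17). It then simply appends the standing assumption on $\hat W$.

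You instead prove the infinitesimal form bound directly through the density. You split $v=v_1+v_2$ by truncation, apply Hölder with exponents $L^6$--$L^{6/5}$, and use Hoffmann-Ostenhof together with the Sobolev embedding $H^1_\mathrm{per}\hookrightarrow L^6_\mathrm{per}$ applied to $\sqrt{\rho_\psi}$.

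The citation buys brevity. Through Kato--Rellich it even gives operator boundedness of $\hat V$, so $\hat H_\mathrm{KS}$ is self-adjoint on $D(\hat T)$ itself. It leaves implicit three things: the transfer from $\mathbb{R}^3$ to the torus, the lifting of a one-body $v$ to $\sum_j v(\rr_j)$ on $\mathcal{H}_N$, and why the relative bounds of $\hat V$ and $\hat W$ combine to something below $1$.

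Your estimate makes all three explicit. The $N$-body lifting is automatic because $\langle\psi,\hat V\psi\rangle=\int v\rho_\psi$. Your argument would also work for any $v\in L^{3/2}$, so it is more general than needed.

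Two small slips, neither affecting the conclusion:
\begin{itemize}
\item The inequality $\|\rho\|_{L^{6/5}}\le|\Omega|^{1/2}\|\rho\|_{L^1}^{1/2}\|\rho\|_{L^3}^{1/2}$ fails for constant $\rho$ when $|\Omega|<1$. The correct interpolation is $\|\rho\|_{L^{6/5}}\le\|\rho\|_{L^1}^{3/4}\|\rho\|_{L^3}^{1/4}$. Alternatively, Hölder gives directly $\|\rho\|_{L^{6/5}}\le|\Omega|^{1/2}\|\rho\|_{L^3}$, which suffices.
\item The standing assumption only gives $\hat W$ some relative bound $a_W<1$, not one below $1/2$. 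What makes the combination work is that your $\hat V$ has relative bound zero, so you can choose $\eta<1-a_W$.
\end{itemize}
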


\begin{proof}
    In the proof of Lemma~\ref{lemma:densSet-in-Xdensaff} it was noted that $\Xpot\subseteq L^6_\mathrm{per}(\Omega)$. But this potential space is known to be in the scope of the KLMN theorem (\cite[Thm.~X.17]{reed-simon-2} and even the Kato--Rellich theorem~\cite[Thm.~X.16]{reed-simon-2}) and so together with the standard assumption from \cref{subsec:DFT-KS}  that $\hat W$ also allows an application of the KLMN theorem.
\end{proof}

\section{The Moreau--Yosida inverse Kohn--Sham algorithm}\label{sec:InversionScheme}
We now present a general framework for density-potential inversion using the Moreau--Yosida regularisation that builds on \cite{Penz_2023} and that generalises the KS inversion algorithm presented in \cite{Herbst_2025}. Then, we exemplify the general setting using the periodic homogeneous Sobolev spaces and prove the lower semicontinuity of the mixed-state constrained-search functional for the KS system in this topology.

\subsection{The Inversion Algorithm}\label{subsec:InversionAlgorithm}
The idea of the inversion algorithm is to recover the potential from which a ground-state density originates. In KS inversion, the inversion algorithm amounts to recovering the exact exchange-correlation potential in the KS method, and this will be addressed in \cref{subsec:PeriodicIKS}.  However, we will first introduce our density-potential inversion framework in a general setting that allows application of the Moreau--Yosida regularisation~\cite{Penz_2025Perspective_arXiv}.
\begin{assumption} \label{assump:FX}
    The density space $X$ is a strictly convex and reflexive real Banach space that contains $\densSet$, and the dual space $X^\ast$ is strictly convex. The functional $\mathcal{F}$ is proper, convex, and lower semicontinuous (l.s.c.) with respect to the topology of $X$. 
\end{assumption}

\begin{definition}\label{def:representable}
    A density $\rho_\mathrm{gs}\in X$ is called \emph{$v$-representable with respect to $\mathcal{F}$} if $\underline{\partial} \mathcal{F}(\rho_\mathrm{gs})\neq \emptyset$. In addition, any element $v\in X^*$ in $-\underline{\partial} \mathcal{F}(\rho_\mathrm{gs})$ we call a \emph{representing potential}.
\end{definition}

Such a representing potential $v\in -\underline{\partial} \mathcal{F}(\rho_\mathrm{gs})$ then yields the corresponding ground-state density as the minimiser in the energy expression,
\begin{equation*}
    \rho_\mathrm{gs} = \argmin_{\rho\in X}(\mathcal{F}(\rho) + \langle v, \rho\rangle).
\end{equation*}
To determine a representing potential, we will use the proximal point of $\mathcal{F}$ and the duality mapping $J:X\to X^\ast$. The central problem of the inversion scheme is the minimisation over $\rho \in X$ of the energy functional
\begin{equation}\label{eq:MYEnergyFunctional}
    \mathcal{E}(\rho;\, \rho_\mathrm{gs}) = \mathcal{F}(\rho) + \frac{1}{2\eps} \norm{\rho-\rho_\mathrm{gs}}^2_{X}.
\end{equation}
By the fact that the functional $\rho \mapsto \norm{\rho}^2_{X}$ is strictly convex and continuous, it follows that $\rho \mapsto \mathcal{E}(\rho;\, \rho_\mathrm{gs})$ is a strictly convex and l.s.c.\ functional on $X$. Since the infimum of \cref{eq:MYEnergyFunctional} is precisely the Moreau envelope of $\mathcal{F}$ at $\rho_\mathrm{gs}$, the minimiser of $\mathcal{E}(\rho;\,\rho_\mathrm{gs})$ is the proximal point $\Pi^\varepsilon_{\mathcal{F}}(\rho_\mathrm{gs}) = \argmin_{\rho \in X} \mathcal{E}(\rho;\,\rho_\mathrm{gs})$. Recall from Proposition~\ref{prop:UniqueProx} that the proximal point exists uniquely for every $\rho_\mathrm{gs}\in X$ and we will simplify our notation by denoting the proximal point by $\rho^\eps = \Pi^\varepsilon_{\mathcal{F}}(\rho_\mathrm{gs})$ (c.f.\ \cref{subsec:MYreg}) when the argument and choice of functional is clear from context. We refer to $\rho^\eps$ as the \textit{proximal density}. Furthermore, it holds from Theorem~\ref{thrm:MYRegProperties}\ref{item:limit-prox} that  $\rho^\eps \to \rho_\mathrm{gs}$ as $\eps \to 0^+$ whenever $\rho_\mathrm{gs}\in \dom (\mathcal{F})$.
From an algorithmic point of view, the uniqueness of the proximal density is an advantageous feature, since it facilitates convergence of an iterative method for determining it.

Since $\underline{\partial} \tfrac{1}{2}\norm{\cdot}^2_X$ gives the duality mapping, the stationarity condition for the proximal density, $\underline{\partial}\mathcal{E}(\rho^\eps;\,\rho_\mathrm{gs}) \ni 0$, implies that (Theorem~\ref{thrm:MYRegProperties}\ref{item:StationarityProximalPoint})
\begin{equation*}
    \underline{\partial} \mathcal{F}\qty(\rho^\eps) + \frac{1}{\eps}J\qty(\rho^\eps - \rho_\mathrm{gs}) \ni 0. 
\end{equation*}
Under the assumption that $\underline{\partial} \mathcal{F}(\rho_\mathrm{gs})\neq\emptyset$ ($v$-representability with respect to $\mathcal{F}$),
one obtains a representing potential $v$ as the (weak) limit of $v^\eps = \frac{1}{\eps}J(\rho^\eps - \rho_\mathrm{gs})$  as $\eps \to 0^+$ by Theorem~\ref{thrm:MYRegProperties}\ref{item:StationarityProximalPoint}\&\ref{item:limit-pot}. In particular, this $v$ is the unique element in $-\underline{\partial}\mathcal{F}(\rho_\mathrm{gs})$ with minimal norm. If $X$ is a Hilbert space, then the convergence $v^\eps \to v$ is strong.
To summarise, a representing potential can be obtained as follows.
\begin{theorem}\label{thrm:DensPotInv}
    Suppose that $X$ and $\mathcal{F}$ fulfil Assumption~\ref{assump:FX}, and let $\rho_\mathrm{gs}\in X$ be $v$-representable with respect to $\mathcal{F}$, i.e., $\underline{\partial} \mathcal{F}(\rho_\mathrm{gs})\neq \emptyset$. 
    Let $v^\eps=\frac{1}{\eps}J\left(\Pi^\varepsilon_{\mathcal{F}}(\rho_\mathrm{gs}) - \rho_\mathrm{gs}\right)$,
    then a representing potential is given by
    \begin{equation*}
        v = \wlim_{\eps \to 0^+} v^\eps 
        = \wlim_{\eps \to 0^+} J\left( \frac{1}{\eps}\left(\Pi^\varepsilon_{\mathcal{F}}(\rho_\mathrm{gs}) - \rho_\mathrm{gs}\right)\right).
    \end{equation*}
    If $X= \mathcal{H}$ is a Hilbert space then
    \begin{equation}\label{eq:ProxRightDerivative}
        v = J\qty(\lim_{\eps \to 0^+}\frac{1}{\eps} \left(\Pi^\varepsilon_{\mathcal{F}}(\rho_\mathrm{gs}) -\rho_\mathrm{gs}\right)).
    \end{equation}
    For any $\tilde{\rho}_\mathrm{gs} \in \mathcal{H}$ and $\eps > 0$ let $ \tilde{v}^\eps = \tfrac{1}{\eps}J(\Pi^\varepsilon_{\mathcal{F}}(\tilde{\rho}_\mathrm{gs}) - \tilde{\rho}_\mathrm{gs}))$ then it holds
    \begin{equation}\label{eq:TotalErrorBound}
        \norm{v- \tilde{v}^\eps}_{\mathcal{H}^\ast} 
        \leq \norm{v - v^\eps}_{\mathcal{H}^\ast} + \frac{1}{\eps} \norm{\rho_\mathrm{gs} - \tilde{\rho}_\mathrm{gs} }_{\mathcal{H}}. 
    \end{equation}
\end{theorem}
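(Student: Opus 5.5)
The argument has three parts, one for each claim of the theorem.

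\emph{The weak limit.} Write $\rho^\eps=\Pi^\varepsilon_{\mathcal{F}}(\rho_\mathrm{gs})$. By Theorem~\ref{thrm:MYRegProperties}\ref{item:StationarityProximalPoint},
\[
\rmr\mathcal{F}^\eps(\rho_\mathrm{gs})=\tfrac{1}{\eps}J(\rho_\mathrm{gs}-\rho^\eps)\in\underline\partial\mathcal{F}(\rho^\eps).
\]
The duality mapping is homogeneous, so $J(-x)=-J(x)$ and $J(\lambda x)=\lambda J(x)$ for $\lambda>0$. Hence
\[
v^\eps=\tfrac{1}{\eps}J(\rho^\eps-\rho_\mathrm{gs})=-\rmr\mathcal{F}^\eps(\rho_\mathrm{gs}) .
\]
Since $\rho_\mathrm{gs}\in\dom(\underline\partial\mathcal{F})$, Theorem~\ref{thrm:MYRegProperties}\ref{item:limit-pot} gives $\rmr\mathcal{F}^\eps(\rho_\mathrm{gs})\rightharpoonup u$, where $u$ is the minimal-norm element of $\underline\partial\mathcal{F}(\rho_\mathrm{gs})$. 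Therefore $v^\eps\rightharpoonup -u=:v\in-\underline\partial\mathcal{F}(\rho_\mathrm{gs})$, which is a representing potential. Homogeneity also justifies moving the factor $1/\eps$ inside $J$ in the displayed formula.

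\emph{The Hilbert-space case.} A Hilbert space $\mathcal{H}^*$ is uniformly convex, so Theorem~\ref{thrm:MYRegProperties}\ref{item:limit-pot-strong} upgrades the convergence to $v^\eps\to v$ strongly. In a Hilbert space $J$ is the Riesz map, a linear isometric isomorphism, so $J^{-1}$ is continuous. Hence
\[
\tfrac1\eps(\rho^\eps-\rho_\mathrm{gs})=J^{-1}v^\eps\to J^{-1}v ,
\]
so the limit inside $J$ exists, and applying $J$ yields \eqref{eq:ProxRightDerivative}.

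\emph{The error bound.} By the triangle inequality,
\[
\|v-\tilde v^\eps\|\le\|v-v^\eps\|+\|v^\eps-\tilde v^\eps\|,
\]
so it suffices to bound the second term. By linearity and isometry of $J$,
\[
\|v^\eps-\tilde v^\eps\|_{\mathcal{H}^*}=\tfrac1\eps\bigl\|(\mathrm{Id}-\Pi^\varepsilon_{\mathcal{F}})(\rho_\mathrm{gs})-(\mathrm{Id}-\Pi^\varepsilon_{\mathcal{F}})(\tilde\rho_\mathrm{gs})\bigr\|_{\mathcal{H}} .
\]
The non-expansiveness of $\mathrm{Id}-\Pi^\varepsilon_{\mathcal{F}}$, a consequence of Theorem~\ref{thm:firmNE}, bounds this by $\tfrac1\eps\|\rho_\mathrm{gs}-\tilde\rho_\mathrm{gs}\|_{\mathcal{H}}$, which gives \eqref{eq:TotalErrorBound}.

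The only delicate point is bookkeeping. The sign convention relating $v$ to $\underline\partial\mathcal{F}$ must be tracked, and we must check that $J$ being merely homogeneous, rather than linear, suffices in the Banach case. The latter holds because only the scalings by $-1$ and by $1/\eps$ are needed.
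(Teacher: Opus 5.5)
Your proposal is correct and follows the paper's proof essentially step for step. The weak limit comes from Theorem~\ref{thrm:MYRegProperties}\ref{item:StationarityProximalPoint}\&\ref{item:limit-pot} together with homogeneity of $J$. The Hilbert case uses the strong convergence of \ref{item:limit-pot-strong}. The error bound follows from the triangle inequality, linearity of $J$, and non-expansiveness of $\mathrm{Id}-\Pi^\varepsilon_{\mathcal{F}}$ from Theorem~\ref{thm:firmNE}.

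You are slightly more careful than the paper in two places. You track the sign explicitly via $v^\eps=-\rmr\mathcal{F}^\eps(\rho_\mathrm{gs})$. You also use continuity of $J^{-1}$, rather than of $J$, to show that $\frac{1}{\eps}(\rho^\eps-\rho_\mathrm{gs})$ actually converges before moving the limit inside $J$.
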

\begin{proof}
    As already noted, the first formula follows from Theorem~\ref{thrm:MYRegProperties}\ref{item:StationarityProximalPoint}\&\ref{item:limit-pot}. 
    Here the prefactor $1/\eps$ can always be passed into the duality mapping because of its homogeneity.
    If further $X^*$ is uniformly convex, the convergence is strong (Theorem~\ref{thrm:MYRegProperties}\ref{item:limit-pot-strong}) and the duality mapping is absolutely continuous~\cite[Prop.~1.117]{Barbu-Precepanu}. The limit can then be passed to the argument of $J$ and \cref{eq:ProxRightDerivative} follows. Furthermore, by the linearity of the duality mapping on Hilbert spaces and the triangle inequality we have
    \begin{align*}
        \norm{v- \tilde{v}^\eps}_{\mathcal{H}^\ast}  \leq \norm{v - v^\eps}_{\mathcal{H}^\ast} + \frac{1}{\eps}\norm{\Pi^\varepsilon_{\mathcal{F}}(\rho_\mathrm{gs}) - \rho_\mathrm{gs} - \Pi^\varepsilon_{\mathcal{F}}(\tilde{\rho}_\mathrm{gs}) + \tilde{\rho}_\mathrm{gs}}_{\mathcal{H}}
    \end{align*}
    and by Theorem~\ref{thm:firmNE} it further holds 
    \begin{equation*}
        \norm{\rho^\eps - \rho_\mathrm{gs} - \tilde{\rho}^\eps + \tilde{\rho}_\mathrm{gs}}_{\mathcal{H}} 
        = \norm{\qty(\mathrm{Id} - \Pi^\varepsilon_{\mathcal{F}})( \rho_\mathrm{gs}) - \qty(\mathrm{Id} - \Pi^\varepsilon_{\mathcal{F}})(\tilde{\rho}_\mathrm{gs})}_{\mathcal{H}} 
        \leq \norm{\rho_\mathrm{gs} - \tilde{\rho}_\mathrm{gs} }_{\mathcal{H}}.
    \end{equation*}
    This shows the inequality in \cref{eq:TotalErrorBound}.
\end{proof}

The choice of the functional $\mathcal{F}$ in the presentation so far is only limited by Assumption~\ref{assump:FX}. Motivated by our applications in \cref{subsec:PeriodicIKS}, for a functional $\mathcal{F}_0$ that fulfils Assumption~\ref{assump:FX} and parameters
$\alpha\in \mathbb R$ and $v_0\in X^\ast$, we define
\begin{equation}\label{eq:variableFunc}
    \mathcal{F}_{\alpha,v_0}(\rho) = \mathcal{F}_0(\rho) +  \frac{\alpha}{2}\norm{\rho}_X^2   +    \langle v_0, \rho \rangle.
\end{equation}
The functional $\mathcal{F}_{\alpha,v_0}$ is then still proper, convex, and l.s.c., and thus satisfies Assumption~\ref{assump:FX}. In the Hilbert-space case we prove the following. 

\begin{proposition}\label{prop:ProxGuidingFunc}
    Let $X = \mathcal{H}$ be a real Hilbert space and suppose $\alpha\geq 0$, $v_0\in \mathcal{H}^\ast$, $\rho\in \mathcal{H}$ and set $\mu = \eps/(1 + \eps\alpha)>0$. 
    For $\mathcal{F}_{\alpha,v_0}$ defined as in \cref{eq:variableFunc} the proximal point relates to that of $\mathcal{F}_0$ as
    \begin{equation*}
       \Pi_{\mathcal{F}_{\alpha,v_0}}^\eps\qty(\rho) 
        = \Pi^\mu_{\mathcal{F}_0} \qty(\frac{\mu}{\eps}\rho - \mu J^{-1}(v_0)).
    \end{equation*}
\end{proposition}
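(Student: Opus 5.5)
We argue directly from the variational characterisation in Definition~\ref{def:ProximalOperator}, completing the square in the Hilbert-space norm. By definition, $\Pi^\eps_{\mathcal{F}_{\alpha,v_0}}(\rho)$ is the unique minimiser (Proposition~\ref{prop:UniqueProx}) of
\begin{equation*}
    \sigma \mapsto \mathcal{F}_0(\sigma) + \frac{\alpha}{2}\norm{\sigma}_\mathcal{H}^2 + \langle v_0,\sigma\rangle + \frac{1}{2\eps}\norm{\sigma-\rho}_\mathcal{H}^2 .
\end{equation*}
The plan is to rewrite the three non-$\mathcal{F}_0$ terms as $\frac{1}{2\mu}\norm{\sigma - \tilde\rho}_\mathcal{H}^2 + C$, with $C$ independent of $\sigma$. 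The minimiser is then the minimiser of $\mathcal{F}_0(\sigma)+\frac{1}{2\mu}\norm{\sigma-\tilde\rho}^2_\mathcal{H}$, namely $\Pi^\mu_{\mathcal{F}_0}(\tilde\rho)$, and it remains to identify $\tilde\rho$.

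The key tool is the Hilbert-space structure. The pairing is $\langle v_0,\sigma\rangle = \langle J^{-1}(v_0),\sigma\rangle_\mathcal{H}$ by the Riesz identification, since $J$ is the Riesz map and is linear. Expanding $\frac{1}{2\eps}\norm{\sigma-\rho}^2 = \frac{1}{2\eps}\norm{\sigma}^2 - \frac{1}{\eps}\langle\rho,\sigma\rangle_\mathcal{H} + \frac{1}{2\eps}\norm{\rho}^2$, the quadratic part becomes $\bigl(\frac{\alpha}{2}+\frac{1}{2\eps}\bigr)\norm{\sigma}^2 = \frac{1}{2\mu}\norm{\sigma}^2$, because $\frac{1}{\mu} = \frac{1+\eps\alpha}{\eps}$. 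The linear part is $-\langle \frac{1}{\eps}\rho - J^{-1}(v_0), \sigma\rangle_\mathcal{H}$. We write it as $-\frac{1}{\mu}\langle\tilde\rho,\sigma\rangle_\mathcal{H}$ with $\tilde\rho = \frac{\mu}{\eps}\rho - \mu J^{-1}(v_0)$. Completing the square then gives $\frac{1}{2\mu}\norm{\sigma-\tilde\rho}^2_\mathcal{H} - \frac{1}{2\mu}\norm{\tilde\rho}^2_\mathcal{H} + \frac{1}{2\eps}\norm{\rho}^2_\mathcal{H}$, and the last two terms are constant in $\sigma$.

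Two bookkeeping points need checking. First, $\mu>0$ requires $\alpha\ge 0$, which the hypothesis provides. Second, $\mathcal{F}_0$ must satisfy the hypotheses of Proposition~\ref{prop:UniqueProx}, which holds by assumption, so $\Pi^\mu_{\mathcal{F}_0}$ is well-defined. Since the two objective functionals differ by an additive constant, they share the same unique minimiser, and this gives the claimed identity.

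The only step requiring care is the identification of the dual pairing with the inner product via $J^{-1}$, that is, $\langle v_0,\sigma\rangle = \langle J^{-1}v_0,\sigma\rangle_\mathcal{H}$. This follows from Definition~\ref{def:DualityMapping} in the Hilbert case, where $J$ is the Riesz isomorphism. In the concrete periodic setting of Definition~\ref{def:spaces} with the affine density space, one should additionally note that only differences of densities enter, so the same computation applies to the mean-free parts. Alternatively, one could verify the result through the optimality condition $0\in\underline\partial\mathcal{F}_0(\sigma) + \alpha J\sigma + v_0 + \frac{1}{\eps}J(\sigma-\rho)$ and Theorem~\ref{thrm:MYRegProperties}\ref{item:StationarityProximalPoint}. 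The direct square completion, however, avoids any subdifferential sum rule.
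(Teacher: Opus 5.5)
Your proposal is correct and matches the paper's proof essentially line by line: you expand $\frac{1}{2\eps}\norm{\sigma-\rho}^2_\mathcal{H}$, collect the quadratic coefficient $\frac{\alpha}{2}+\frac{1}{2\eps}=\frac{1}{2\mu}$ and the linear term $-\langle \eps^{-1}J(\rho)-v_0,\sigma\rangle$, and complete the square around $\mu(\eps^{-1}\rho-J^{-1}(v_0))$, discarding $\sigma$-independent constants. Your additional bookkeeping ($\mu>0$ from $\alpha\geq 0$, and well-definedness of $\Pi^\mu_{\mathcal{F}_0}$) is sound and only makes explicit what the paper leaves implicit.
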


\begin{proof}
    Inserting the definition \cref{eq:variableFunc} and using the expansion of the square
    \begin{equation*}
        \norm{\sigma - \rho}^2_\mathcal{H} = \norm{\sigma}_\mathcal{H}^2 - 2\langle J(\rho),\sigma\rangle + \norm{ \rho}_{\mathcal{H}}^2 ,
    \end{equation*}
    we obtain (ignoring terms constant in $\sigma$ that do not affect the $\argmin$)
    \begin{align*}
		\Pi_{\mathcal{F}_{\alpha,v_0}}^\eps (\rho) 
        &= \argmin_{\sigma \in \mathcal{H}} \qty{ \mathcal{F}_0(\sigma) + \frac{\alpha}{2}\norm{\sigma}^2_\mathcal{H}  + \langle v_0,  \sigma \rangle + 
        \frac{1}{2\eps}\norm{\sigma -\rho}^2_\mathcal{H} } \\
        &= \argmin_{\sigma \in \mathcal{H}} \qty{ \mathcal{F}_0(\sigma) + \qty(\frac{\alpha}{2}+\frac{1}{2\eps}) \norm{\sigma}^2_\mathcal{H} - \langle \eps^{-1} J(\rho)-v_0, \sigma \rangle  } \\
        &= \argmin_{\sigma \in \mathcal{H}} \qty{ \mathcal{F}_0(\sigma) + \frac{1}{2\mu}  \norm{\sigma - \mu\left(\eps^{-1}\rho-J^{-1}(v_0)\right)}^2_\mathcal{H}  }
        = \Pi_{\mathcal{F}_0}^\mu \qty(\frac{\mu}{\eps}\rho-\mu J^{-1}(v_0)).
	\end{align*}
\end{proof}

\subsection{The Kinetic-Energy Constrained-Search Functional}\label{subsec:kin-functional}
Throughout the remainder of this work, we assume the periodic setting established in \cref{subsec:homogeneousSobolev}. Recall that in this setting, the unit cell $\Omega$ is a bounded subset of $\mathbb{R}^3$ and (its closure) is always compact.
We choose the basic spaces for densities and potentials to be $\Xdensaff$ and $\Xpot$, as given in Definition~\ref{def:spaces}.  
In analogy to the functional $F_\mathrm{DM}(\rho)$, let us introduce the kinetic-energy mixed-state constrained-search functional $T_\mathrm{DM}: \Xdensaff \to \overline{\mathbb
R}$ as 
\begin{equation}\label{eq:def-T}
    T_\mathrm{DM}(\rho) = \left\{ \begin{array}{ll}
         \displaystyle \inf_{\Gamma\mapsto \rho} \Tr[\hat{T}\Gamma] & \text{if}\; \rho\in\densSet, \\
         +\infty & \text{otherwise}. 
    \end{array}\right.
\end{equation}
In the inversion algorithm of \cref{subsec:InversionAlgorithm}, if the functional $\mathcal{F}$ contains $T_\mathrm{DM}$ we refer to the density-potential inversion as a \emph{KS inversion scheme}. The kinetic-energy functional thus plays a central role for the inversion and we will next verify that it satisfies Assumption~\ref{assump:FX}.
Since the functional is defined as an infimum over a linear function and with a linear constraint, it is automatically convex on the convex set $\densSet \subset \Xdensaff$. The extension to all of $\Xdensaff$ by $+\infty$ conserves the convexity. The rest of this section will be devoted to showing that it is also l.s.c.\ with respect to the norm of $\Xdens$. The importance of lower semicontinuity for density functionals is highlighted in a recent perspective article~\cite{Penz_2025Perspective_arXiv}. A consequence for reflexive spaces $\Xdens$ discussed there is that $\overline{\partial}E(v)$ must be non-empty for every $v\in\Xpot$, which means that one finds a ground state for every potential. This naturally holds in a periodic setting.

The proof of lower semicontinuity of $T_\mathrm{DM}(\rho)$ is facilitated by showing first the lower semicontinuity of the von Weizsäcker kinetic-energy functional defined by 
\begin{equation}\label{eq:vWkef}
    G(\rho) = \left\{ 
        \begin{array}{ll}
            \displaystyle \int_\Omega \qty(\nabla\sqrt{\rho}(\mathbf{r}))^2 \dd{\mathbf{r}} & \;\text{if}\; \rho\in\densSet, \\
            +\infty & \;\text{otherwise.} 
        \end{array}\right.
\end{equation}
The proof of the lower semicontinuity of $G$ is based on considering a sequence  $\rho_n\to\rho$ in $\Xdens$, where, when necessary, we pass to subsequences. Note that convergence in the $\Xdens$-topology is meaningful for sequences in $\Xdensaff$ because the affine space inherits its topology from $\Xdens$. The argument proceeds in four steps: (i) By the definition of $G$, any sequence $\{\sqrt{\rho_n}\}$ with $\rho_n$ in the effective domain of $G$ will be bounded in $H^1_\mathrm{per}(\Omega)$. (ii) By compact embedding we obtain $L^q$ convergence, for $1\leq q < 6$. (iii) We can then connect back to the $\Xdens$ topology by the use of Hölder's inequality and a further embedding to identify the limit of the sequence as needed. (iv) Weak lower-semicontinuity of the functional $f\mapsto\|\nabla f\|_{L^2}^2$ then establishes the proof.

\begin{lemma}\label{lemma:G-lsc}
    The von Weizsäcker kinetic-energy functional
    $G:\Xdensaff\to \overline{\mathbb{R}}$ defined in \cref{eq:vWkef} 
    is convex and weakly lower semicontinuous in $\Xdens$.
\end{lemma}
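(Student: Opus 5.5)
Convexity and weak lower semicontinuity are handled separately; weak and strong lower semicontinuity coincide for convex functionals on a Hilbert space, but the argument below works directly with weakly convergent sequences.

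\emph{Convexity.} The domain $\densSet$ is convex. For $\rho_0,\rho_1\in\densSet$ and $\rho_t=(1-t)\rho_0+t\rho_1$, I use the classical pointwise argument: the map $(a,\mathbf b)\mapsto |\mathbf b|^2/a$ is jointly convex on $(0,\infty)\times\mathbb R^3$. Since $|\nabla\sqrt\rho|^2=\tfrac14|\nabla\rho|^2/\rho$ on $\{\rho>0\}$ and vanishes a.e.\ elsewhere, this gives $G(\rho_t)\le(1-t)G(\rho_0)+tG(\rho_1)$. Alternatively, one can use the convexity of $\rho\mapsto\|\nabla\sqrt\rho\|^2$ shown by \citet{Lieb1983}. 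The extension by $+\infty$ preserves convexity.

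\emph{Lower semicontinuity.} Let $\rho_n\rightharpoonup\rho$ in $\Xdens$, meaning $\rho_n-N/|\Omega|$ converges weakly. We may assume $\liminf_n G(\rho_n)=L<\infty$ and pass to a subsequence with $G(\rho_n)\to L$ and $\rho_n\in\densSet$.
\begin{enumerate}[(i)]
\item Set $u_n=\sqrt{\rho_n}$. Then $\|u_n\|_{L^2}^2=N$ and $\|\nabla u_n\|_{L^2}^2=G(\rho_n)$ are bounded, so $(u_n)$ is bounded in $H^1_\mathrm{per}(\Omega)$. Extract $u_n\rightharpoonup u$ weakly in $H^1_\mathrm{per}$.
\item By Rellich--Kondrachov on the compact cell, $u_n\to u$ strongly in $L^q$ for $q<6$. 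In particular $u\ge0$ and $\int u^2=N$.
\item With $\tilde\rho=u^2$, Hölder gives
\[
\|\rho_n-\tilde\rho\|_{L^{6/5}}\le\|u_n-u\|_{L^{2}}\|u_n+u\|_{L^{3}}\to0 .
\]
The embedding $L^{6/5}_\mathrm{per}\hookrightarrow\Xdens$ from the proof of Lemma~\ref{lemma:densSet-in-Xdensaff} applies to mean-zero parts, and the mean of $\rho_n-\tilde\rho$ is zero. Hence $\rho_n\to\tilde\rho$ strongly in $\Xdensaff$. Weak limits are unique, so $\rho=\tilde\rho$. Moreover $\sqrt\rho=u\in H^1$ and $\int\rho=N$, so $\rho\in\densSet$.
\item Weak lower semicontinuity of $f\mapsto\|\nabla f\|_{L^2}^2$ under $u_n\rightharpoonup u$ in $H^1$ gives $G(\rho)=\|\nabla u\|^2\le\liminf\|\nabla u_n\|^2=L$.
\end{enumerate}
Since the subsequence in (i) was extracted from one already realising the liminf, the bound holds for the original sequence.

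The main obstacle is step (iii). Convergence in $\Xdens$ is very weak, so the limit must be identified through the $L^{6/5}\subseteq\Xdens$ embedding, which relies on the Sobolev embedding $\Xpot\subseteq L^6$. Some care is also needed with the affine shift and the zero-mean condition. The other steps are standard compactness arguments.
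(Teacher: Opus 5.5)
Your proof is correct and follows essentially the same route as the paper. Both arguments bound $\sqrt{\rho_n}$ in $H^1_\mathrm{per}(\Omega)$, use Rellich--Kondrachov, identify the limit through the estimate $\|\rho_n-u^2\|_{\Xdens}\le C\,\|u_n-u\|_{L^2}\|u_n+u\|_{L^3}$, and conclude with weak l.s.c.\ of the Dirichlet integral. Your $L^{6/5}\hookrightarrow\Xdens$ embedding is exactly the paper's duality computation with $\Xpot\hookrightarrow L^6$. The only difference is that you treat weakly convergent $\rho_n$ directly and identify the limit by uniqueness of weak limits. The paper instead proves strong l.s.c.\ and then upgrades to weak l.s.c.\ via convexity; both routes are valid.
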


\begin{proof}
    We know from Lieb that $G(\rho)=\int_\Omega (\nabla\sqrt{\rho})^2=\|\nabla\sqrt{\rho}\|_{L^2}^2$ is convex on (the convex set) $\densSet$~\cite{Lieb1983}. Since it is extended to $+\infty$ outside of $\densSet$, $G$ must be a convex functional. This part does not change in the periodic setting.

    For (strong) l.s.c.\ we need to show that for every $\rho_n\to\rho$ in $\Xdens$, we have $G(\rho)\leq \liminf_n G(\rho_n)$. If $\liminf_n G(\rho_n)=+\infty$ then nothing remains to be shown, so we limit ourselves to the case $g=\liminf_n G(\rho_n)< +\infty$ and restrict the sequence to all $\rho_n$ with $G(\rho_n) < g+\delta$ for some $\delta>0$ (e.g., a subsequence that realises the limit inferior in its limit). This means $\rho_n\in\densSet$ and from $G(\rho_n) = \|\nabla\sqrt{\rho_n}\|_{L^2}^2 = \|\sqrt{\rho_n}\|_{H^1}^2-N$ (since $\|\sqrt{\rho_n}\|_{L^2}^2=\|\rho_n\|_{L^1}=N$) we then have $\{\sqrt{\rho_n}\}$ bounded in $H^1_\mathrm{per}(\Omega)$.
    Since $\Omega$ is bounded we can use the Rellich--Kondrachov theorem~\cite[Thm.~6.3]{adams-book} to show that $H^1_\mathrm{per}(\Omega)$ \emph{compactly} embeds into $L^q_\mathrm{per}(\Omega)$ with $1\leq q < 6$ and it still embeds continuously for $1\leq q\leq 6$. This means $\{\sqrt{\rho_n}\}$ is also bounded in $L^q_\mathrm{per}(\Omega)$, $1\leq q\leq 6$.
    Returning to the fact that the sequence $\{\sqrt{\rho_n}\}$ is bounded in $H^1_\mathrm{per}(\Omega)$, the compact embedding implies that there is a strongly convergent subsequence (again denoted by $\rho_n$) $\sqrt{\rho_n}\to f$ in $L^q$, $1 \leq q<6$. With the abbreviation $u_n=\sqrt{\rho_n}+f$ and using the Cauchy--Schwarz inequality we write
    \begin{equation*}
        \|\rho_n- f^2\|_{L^1} = \int_\Omega|\sqrt{\rho_n}-f|\cdot|u_n| \leq \|\sqrt{\rho_n}-f\|_{L^2}\cdot\|u_n\|_{L^2}.
    \end{equation*}
    Since in particular $\sqrt{\rho_n}\to f$ in $L^2$ and $u_n$ is bounded in $L^2$, this implies $\rho_n\to f^2$ in $L^1$, so $f^2$ also integrates to the particle number $N$.
    By the Banach--Alaoglu theorem for a Hilbert-space setting~\cite[Cor.~11.9]{clason2020-book} we also have that $\sqrt{\rho_n}\weakto f$ (weakly) in $H^1_\mathrm{per}(\Omega)$.
    With the continuous embedding $H^1_\mathrm{per}(\Omega)\hookrightarrow L^6_\mathrm{per}(\Omega)$ it follows $f^2\in L^3_\mathrm{per}(\Omega)$. We thus know that $f^2\in\densSet$ and with this $G(f^2)<\infty$.
    
    Now we use the definition of the dual norm in $\Xdens$ that takes the supremum over all dual pairings with elements from the predual, that is $\Xpot$ by reflexivity. We can write this dual pairing as an integral and in a next step apply the Cauchy--Schwarz inequality to obtain
    \begin{equation*}
        \|\rho_n- f^2\|_{\Xdens} = \sup\left\{ \int_\Omega (\sqrt{\rho_n}-f)u_n v : v\in \Xpot, \|v\|_{\Xpot}\leq 1\right\}\leq \|\sqrt{\rho_n}- f\|_{L^2} \cdot \sup\left\{\|u_n v\|_{L^2} : v\in \Xpot, \|v\|_{\Xpot}\leq 1\right\} .
    \end{equation*}
    We then use the Hölder inequality, $\|u_n v\|_{L^2}\leq \|u_n\|_{L^3} \cdot \|v\|_{L^6}$, and the continuous embedding $\Xpot \hookrightarrow L^6_\mathrm{per}(\Omega)$ that gives $\|v\|_{L^6}\leq C\|v\|_{\Xpot}$ to conclude
    \begin{equation*}
        \begin{aligned}
            \|\rho_n- f^2\|_{\Xdens} \leq C \|\sqrt{\rho_n}- f\|_{L^2} \cdot \|u_n\|_{L^3} \longrightarrow 0 .
        \end{aligned}
    \end{equation*}
    The limit to zero follows from the fact that as shown before $u_n$ is bounded in $L^q_\mathrm{per}(\Omega)$, $1 \leq q\leq 6$, and $\sqrt{\rho_n}\to f$ in $L^q$, with $q = 2 \in [1, 6)$. We have thus established that $\rho_n\to  f^2$ in $\Xdens$, while from the starting assumption it holds $\rho_n\to \rho$ in $\Xdens$, so it follows $ f^2= \rho$ and equivalently 
    $f= \sqrt{\rho}\in H^1_\mathrm{per}(\Omega)$. With $\sqrt{\rho_n}\weakto \sqrt{\rho}$ in $H^1_\mathrm{per}(\Omega)$ (from Banach--Alaoglu), we can now use that $f\mapsto\|\nabla f\|_{L^2}^2$ is (strongly) continuous in $H^1_\mathrm{per}(\Omega)$, which also makes it weakly l.s.c.\ as a convex function~\cite[Cor.~3.9]{brezis-book}. We thus obtain $G(\rho)\leq \liminf_n G(\rho_n)$ and strong l.s.c. Because, again, strong l.s.c.\ and convexity implies weak l.s.c., this concludes the proof.
\end{proof}

Before proving lower semicontinuity of $T_\mathrm{DM}(\rho)$ in the $\Xdens$-topology, one of the main results in this work, we make some remarks. An important ingredient of our proof is the continuous embedding
\begin{equation}\label{eq:H1L1inclusion}
       H^1_\mathrm{per}(\Omega) \subset L^6_\mathrm{per}(\Omega) \subset L^1_\mathrm{per}(\Omega), 
\end{equation}
that we already used in the proof of Lemma~\ref{lemma:G-lsc} and 
which can be used to reduce the setting to the $L^1$-topology considered by \citet[Th.~4.4]{Lieb1983}. 
Additionally, for all $\rho\in\densSet$ we have the estimate
    \begin{equation}\label{eq:T-estimates}
        \frac{1}{2}G(\rho) \leq T_\mathrm{DM}(\rho) \leq 8\pi^2N^2 G(\rho)
    \end{equation}
that follows from Theorems 1.1 and 1.2 in \citet{Lieb1983}, and which does not change in the periodic setting.
This estimate is useful to devise the proof in terms of the von Weizs\"acker kinetic-energy functional instead of $T_\mathrm{DM}(\rho)$ itself. With an additional estimate for the interaction part, it is also possible to show l.s.c.\ of the Lieb functional $F_\mathrm{DM}(\rho)$, but this is not further discussed here since $F_\mathrm{DM}(\rho)$ is not used for the KS inversion procedure.

\begin{theorem}\label{th:FDM-lsc}
    $T_\mathrm{DM}(\rho)$ is convex and weakly (and hence also strongly) lower semicontinuous (in $\Xdens$).
\end{theorem}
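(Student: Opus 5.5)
Convexity follows from the remarks before Lemma~\ref{lemma:G-lsc}: $T_\mathrm{DM}$ is an infimum of the linear map $\Gamma\mapsto\Tr[\hat T\Gamma]$ under the linear constraint $\Gamma\mapsto\rho$. Hence it is convex on the convex set $\densSet$, and the extension by $+\infty$ preserves this. Because a convex functional is weakly l.s.c.\ as soon as it is strongly l.s.c.\ (Mazur; \cite[Cor.~3.9]{brezis-book}), it suffices to prove strong l.s.c.\ in $\Xdens$.

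So take $\rho_n\to\rho$ in $\Xdens$ with $\liminf_n T_\mathrm{DM}(\rho_n)=t<\infty$ (otherwise nothing is to be shown). Pass to a subsequence realising the $\liminf$ with $T_\mathrm{DM}(\rho_n)<t+\delta$, so that $\rho_n\in\densSet$. By the lower bound in \cref{eq:T-estimates}, $G(\rho_n)\le 2T_\mathrm{DM}(\rho_n)$ is bounded, and therefore $\{\sqrt{\rho_n}\}$ is bounded in $H^1_\mathrm{per}(\Omega)$. We now repeat the argument of Lemma~\ref{lemma:G-lsc}. Rellich--Kondrachov gives, along a further subsequence, $\sqrt{\rho_n}\to f$ in $L^2$ and weakly in $H^1_\mathrm{per}$. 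The Hölder/embedding estimate then gives $\rho_n\to f^2$ in $\Xdens$, hence $f^2=\rho$. In particular $\rho\in\densSet$ and $\rho_n\to\rho$ in $L^1_\mathrm{per}(\Omega)$, using the $L^1$ estimate from that proof. We have thus reduced to a sequence with $\rho_n\to\rho$ in $L^1$, uniformly bounded kinetic energy, and $\rho\in\densSet$. This is exactly the situation handled by \citet[Th.~4.4]{Lieb1983}, which is where the embedding \cref{eq:H1L1inclusion} is used.

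To make this self-contained in the periodic setting, I would reprove Lieb's argument directly. Choose near-optimal $\Gamma_n\mapsto\rho_n$ in $\mathcal{D}_N$ with $\Tr[\hat T\Gamma_n]\le T_\mathrm{DM}(\rho_n)+1/n$. Then $\Tr[(1+\hat T)\Gamma_n]$ is bounded. On the torus $\Omega^N$ the operator $(1+\hat T)^{-1}$ is compact, because $-\Delta$ has discrete spectrum. By a standard argument the set $\{\Gamma\geq0: \Tr[(1+\hat T)\Gamma]\le C\}$ is then compact in trace norm. This is the key difference from $\mathbb{R}^3$: no mass can escape, so no tightness argument is needed. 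Extract $\Gamma_n\to\Gamma$ in trace class. Then $\Tr\Gamma=1$, and the density map is continuous from trace class to $L^1$, so $\Gamma\mapsto\rho$. Moreover $\Gamma\mapsto\Tr[\hat T\Gamma]$ is lower semicontinuous under this convergence: write it as a supremum over spectral truncations $\Tr[\hat T P_k\Gamma P_k]$, each of which is continuous. This gives $T_\mathrm{DM}(\rho)\le\Tr[\hat T\Gamma]\le\liminf_n\Tr[\hat T\Gamma_n]=t$. As the original subsequence realised the $\liminf$, this proves strong l.s.c.

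I expect the main obstacle to be the compactness and lower-semicontinuity step for the density matrices. One must identify the limit $\Gamma$ as admissible ($\Tr\Gamma=1$, finite kinetic energy, fermionic symmetry preserved) and show that its density is the $\Xdens$-limit $\rho$. The first part comes from compactness of the resolvent on the bounded periodic cell. The identification of the density relies on having already upgraded the $\Xdens$ convergence to $L^1$ convergence via the von Weizsäcker bound, as in Lemma~\ref{lemma:G-lsc}. Alternatively, one can simply cite \cite[Th.~4.4]{Lieb1983} once $L^1$ convergence is established, noting that its proof carries over to the torus.
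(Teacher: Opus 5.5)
Your proposal is correct and follows the paper's proof. You show convexity from the constrained-search structure and reduce to strong l.s.c.; the von Weizsäcker lower bound gives $H^1_\mathrm{per}$-boundedness of $\sqrt{\rho_n}$, the compact-embedding argument of Lemma~\ref{lemma:G-lsc} upgrades $\Xdens$-convergence to $L^1$-convergence with $\rho\in\densSet$, and Lieb's Theorem~4.4 finishes. Two small differences are that you derive $\rho\in\densSet$ directly instead of splitting into cases, and you correctly use $G\le 2T_\mathrm{DM}$ where the paper writes $T_\mathrm{DM}\geq G$. Your optional trace-class compactness argument on the torus is a valid self-contained replacement for the citation to Lieb, which the paper does not spell out.
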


\begin{proof}
    The convexity was already demonstrated after \cref{eq:def-T}.
    For (strong) l.s.c., consider a sequence $\rho_n\to\rho$ in $\Xdens$. If $\rho\notin\densSet$ then by Lemma~\ref{lemma:G-lsc} we know that $\liminf_n G(\rho_n)=+\infty$, and the lower estimate in \cref{eq:T-estimates} implies that $\liminf_n T_\mathrm{DM}(\rho_n)=+\infty$ and we are done. Next, consider $\rho\in\densSet$, in which case we can also limit the sequence to $\rho_n\in\densSet$, because else $T_\mathrm{DM}(\rho) \leq T_\mathrm{DM}(\rho_n)=+\infty$. Setting $g=\liminf_n T_\mathrm{DM}(\rho_n)$, we can further restrict the sequence to $\rho_n$ with $T_\mathrm{DM}(\rho_n)<g+\delta$ for some $\delta > 0$. Now, we have $g+\delta > T_\mathrm{DM}(\rho_n)\geq G(\rho_n)=\|\sqrt{\rho_n}\|_{H^1_\mathrm{per}}^2-N$, so $\{\sqrt{\rho_n}\}$ is bounded in $H^1_\mathrm{per}(\Omega)$. By the same arguments as in the proof of Lemma~\ref{lemma:G-lsc} and the inclusion in \cref{eq:H1L1inclusion}, we obtain $\rho_n\to\rho$ in $L^1$.
    We are now in the setting of \citet[Thm.~4.4]{Lieb1983} (actually, weak convergence would be enough) that ensures that there is a density matrix $\Gamma$, $\Gamma\mapsto\rho$, such that $\Tr[ H_0\Gamma ] \leq \liminf_n T_\mathrm{DM}(\rho_n)$. By definition of $T_\mathrm{DM}(\rho)$, \cref{eq:def-T}, as a constrained-search functional, it then follows $T_\mathrm{DM}(\rho)\leq \Tr[ H_0\Gamma ]$ and consequently $T_\mathrm{DM}(\rho)\leq \liminf_n T_\mathrm{DM}(\rho_n)$. This demonstrates strong l.s.c., which along with convexity implies weak l.s.c.
\end{proof}

\subsection{Inverse Kohn--Sham on Periodic Homogeneous Sobolev Spaces} \label{subsec:PeriodicIKS}

Using Theorem~\ref{th:FDM-lsc}, we now wish to apply our general density-potential inversion scheme in \cref{subsec:InversionAlgorithm} to the periodic homogeneous Sobolev spaces. This corresponds to a Kohn--Sham setting in which the exchange-correlation potential is determined because the l.s.c.\ of $T_\mathrm{DM}$ establishes the link to the underlying non-interacting Schrödinger equation~\cite{Penz_2025Perspective_arXiv}.
In other words, the particular choices $X = \Xdens$ and $\mathcal F_0 = T_\mathrm{DM}$ satisfy Assumption~\ref{assump:FX}. Note that the density normalised to the number of particles is then obtained by the shift to $\Xdensaff$, the actual domain of the functional. If $\rho_\mathrm{gs} \in \Xdensaff$ is an interacting ground-state density with $\underline{\partial} T_\mathrm{DM}(\rho_\mathrm{gs}) \neq \emptyset$, then there exist at least one representing potential (Definition~\ref{def:representable}) that can be associated with a Schrödinger equation and we say that $\rho_\mathrm{gs}$ is \emph{non-interacting $v$-representable}. Moreover, any element of $-\underline{\partial} T_\mathrm{DM}(\rho_\mathrm{gs}) $ is said to be a KS potential and is denoted by  $v_\mathrm{KS}$.

Since the goal of the inversion algorithm is to target the xc potential, let us choose a \textit{guiding functional} that contains as much prior knowledge of the system as possible in order to aid the inversion algorithm. Recall that $E_\mathrm{H}(\rho) = \frac{1}{2} \norm{\rho}_{\Xdensaff}^2$ from Proposition~\ref{prop:EH}, then it immediately follows from \cref{eq:variableFunc} (with $\alpha=1$ and $v_0 = v_\mathrm{ext}\in \Xpot$) that the functional
\begin{equation} \label{eq:GuidingFunctional}
    \mathcal{F}(\rho) = T_\mathrm{DM}(\rho) + E_{\mathrm{H}}(\rho) + \langle v_\mathrm{ext},  \rho \rangle
\end{equation}
satisfies Assumption~\ref{assump:FX} and fits to the inversion scheme presented in \cref{subsec:InversionAlgorithm}. Therefore, we make $\mathcal{F} : \Xdens \to \overline{\mathbb{R}}$ the initial choice of functional to guide the inversion as it contains all contributions to \cref{eq:DensityFunctionalDecomp} except of the exchange-correlation part. However, other choices are possible and will be discussed in relation to Corollary~\ref{cor:ProxGuidingFunc}.

The central problem of the inversion scheme is the determination of the proximal density $\rho^\eps = \Pi^\varepsilon_\mathcal{F}(\rho_\mathrm{gs})\in \Xdensaff$ and the stationarity condition for the proximal density is  
\begin{equation*}
    \underline{\partial} \mathcal{F}(\rho^\eps) + \frac{1}{\eps}J(\rho^\eps - \rho_\mathrm{gs}) \ni 0 
    \quad \Longrightarrow \quad
    -\underline{\partial} T_\mathrm{DM}(\rho^\eps)\ni  v_\mathrm{H}(\rho^\eps) + v_\mathrm{ext}+ \frac{1}{\eps}J(\rho^\eps - \rho_\mathrm{gs}) =: v_\mathrm{KS}^\eps .
\end{equation*}
The limit procedure from Theorem~\ref{thrm:MYRegProperties}\ref{item:limit-pot-strong} then allows to pass to a $v_\mathrm{KS}=v+v_\mathrm{H}+v_\mathrm{xc}$.
The following result is a direct consequence of Theorems~\ref{thrm:DensPotInv} and~\ref{thrm:MYRegProperties} and serves to summarise the density-potential inversion scheme in the present case.
\begin{corollary}\label{cor:DensityPotentialIvesion}
    Suppose that $\mathcal{F}: \Xdens \to \overline{\mathbb{R}}$ is given by \cref{eq:GuidingFunctional} and let $\rho_\mathrm{gs}\in \Xdensaff$ be non-interacting $v$-representable. Then the proximal density is a unique point $\Pi^\varepsilon_\mathcal{F}(\rho_\mathrm{gs})$ that converges (strongly) to $\rho_\mathrm{gs}$ in $\Xdensaff$. The exchange-correlation potential
    \begin{equation}\label{eq:XdensVxc}
        v_\mathrm{xc} = \lim_{\eps \to 0^+} v_\mathrm{xc}^\eps 
        = \lim_{\eps \to 0^+} \frac{1}{\eps}J(\Pi^\varepsilon_\mathcal{F}(\rho_\mathrm{gs}) - \rho_\mathrm{gs}))
        = J\qty(\lim_{\eps \to 0^+} \frac{1}{\eps}\left(\Pi^\varepsilon_\mathcal{F}(\rho_\mathrm{gs}) -\rho_\mathrm{gs}\right))
    \end{equation}
    is the element in $-\underline{\partial}\mathcal{F}(\rho_\mathrm{gs})$ with minimal norm in $\Xpot$. Additionally, for all $\tilde{\rho}_\mathrm{gs} \in \Xdensaff$ and $ \tilde{v}_\mathrm{xc}^\eps =  \tfrac{1}{\eps}J(\Pi^\varepsilon_\mathcal{F}(\tilde{\rho}_\mathrm{gs}) - \tilde{\rho}_\mathrm{gs}))$we have for any $\eps > 0$ that
    \begin{equation}\label{eq:XdensTotalErrorBound}
        \norm{v_\mathrm{xc}- \tilde{v}_\mathrm{xc}^\eps}_{\Xpot} 
        \leq \norm{v_\mathrm{xc} - v_\mathrm{xc}^\eps}_{\Xpot} + \frac{1}{\eps} \norm{\rho_\mathrm{gs} - \tilde{\rho}_\mathrm{gs} }_{\Xdens}. 
    \end{equation}
\end{corollary}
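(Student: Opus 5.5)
The plan is to reduce everything to Theorem~\ref{thrm:DensPotInv} applied with $X=\Xdens$ (a real Hilbert space) and $\mathcal{F}$ from \cref{eq:GuidingFunctional}, and then identify the limit with the xc potential. The first step checks Assumption~\ref{assump:FX}. $\Xdens=\Hperhomdual(\Omega)$ is a Hilbert space, hence reflexive, strictly convex, with strictly (indeed uniformly) convex dual. It contains $\densSet$ after the affine shift, by Lemma~\ref{lemma:densSet-in-Xdensaff}. $T_\mathrm{DM}$ is proper, convex and l.s.c.\ by Theorem~\ref{th:FDM-lsc}. Adding $E_\mathrm{H}=\frac12\|\cdot\|^2_{\Xdensaff}$ (Proposition~\ref{prop:EH}) and the bounded linear term $\langle v_\mathrm{ext},\cdot\rangle$ preserves these properties, as remarked after \cref{eq:variableFunc} with $\alpha=1$. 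Here one should note that the affine shift by $N/|\Omega|$ is harmless: translating the argument of a functional does not affect convexity or lower semicontinuity, and differences $\rho-\rho'$ lie in $\Xdens$.

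The second step gives existence, uniqueness and convergence of the proximal density. Proposition~\ref{prop:UniqueProx} yields a unique $\Pi^\eps_\mathcal{F}(\rho_\mathrm{gs})$. Non-interacting $v$-representability means $\underline\partial T_\mathrm{DM}(\rho_\mathrm{gs})\neq\emptyset$, so $\rho_\mathrm{gs}\in\dom T_\mathrm{DM}=\dom\mathcal{F}$. Theorem~\ref{thrm:MYRegProperties}\ref{item:limit-prox} then gives strong convergence in $\Xdensaff$. Since $E_\mathrm{H}$ and the linear term are Fréchet differentiable and finite everywhere, the subdifferential sum rule gives
\[
\underline\partial\mathcal{F}(\rho)=\underline\partial T_\mathrm{DM}(\rho)+J(\rho-N/|\Omega|)+v_\mathrm{ext}.
\]
Hence $\underline\partial\mathcal{F}(\rho_\mathrm{gs})\neq\emptyset$ as well, so $\rho_\mathrm{gs}$ is $v$-representable with respect to $\mathcal{F}$.

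The third step applies Theorem~\ref{thrm:DensPotInv} in its Hilbert-space form. This gives the strong limit of $\frac1\eps J(\rho^\eps-\rho_\mathrm{gs})$, and the ability to pull the limit inside $J$ by linearity and continuity of the Riesz map, as in \cref{eq:ProxRightDerivative}. By Theorem~\ref{thrm:MYRegProperties}\ref{item:limit-pot-strong}, the limit is the minimal-norm element of $-\underline\partial\mathcal{F}(\rho_\mathrm{gs})$, up to the sign convention used in Theorem~\ref{thrm:DensPotInv}. The interpretive step is that, by the sum rule above, $-\underline\partial\mathcal{F}(\rho_\mathrm{gs})=-\underline\partial T_\mathrm{DM}(\rho_\mathrm{gs})-v_\mathrm{H}(\rho_\mathrm{gs})-v_\mathrm{ext}$. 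Every KS potential $v_\mathrm{KS}$ therefore corresponds to $v_\mathrm{KS}-v_\mathrm{ext}-v_\mathrm{H}=v_\mathrm{xc}$, and the limit is the xc potential of minimal $\Xpot$-norm.

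The error bound \cref{eq:XdensTotalErrorBound} is exactly \cref{eq:TotalErrorBound} with $\mathcal{H}=\Xdens$. For it I would check that $\rho_\mathrm{gs}-\tilde\rho_\mathrm{gs}\in\Xdens$ and that Theorem~\ref{thm:firmNE} applies to $\mathrm{Id}-\Pi^\eps_\mathcal{F}$ on the affine space, which follows by translating to $\Xdens$. The main obstacle I expect is this bookkeeping between the affine space $\Xdensaff$, where $\mathcal{F}$ really lives, and the linear Hilbert space $\Xdens$ required by the abstract results. The fix is to define $\tilde{\mathcal F}(f)=\mathcal{F}(f+N/|\Omega|)$ on $\Xdens$ and note that proximal maps commute with this translation. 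A secondary point is justifying the sum rule for subdifferentials; it holds because the added terms are continuous and convex on all of $\Xdens$.
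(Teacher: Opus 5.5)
Your proposal is correct and takes the paper's route. The paper treats the corollary as a direct consequence of Theorems~\ref{thrm:DensPotInv} and~\ref{thrm:MYRegProperties}, with Assumption~\ref{assump:FX} supplied by Theorem~\ref{th:FDM-lsc} and the remark after \cref{eq:variableFunc}; your sum-rule step (passing from $\underline\partial T_\mathrm{DM}(\rho_\mathrm{gs})\neq\emptyset$ to $\underline\partial\mathcal{F}(\rho_\mathrm{gs})\neq\emptyset$) and your translation from $\Xdensaff$ to $\Xdens$ make explicit bookkeeping that the paper leaves implicit.
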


\begin{remark}\hfill
    \begin{itemize}
        \item[(i)] From \cref{eq:XdensVxc} we identify the one-sided limit of $(\rho^\eps -\rho_\mathrm{gs})/\eps$ as the right derivative with respect to $\eps$ (denoted $\partial^+_\eps$) of the proximal density at $\eps=0$ and we can thus write $v_\mathrm{xc} = J(\left. \partial^+_\eps \rho^\eps\right|_{\eps=0})$. 
        \item[(ii)] Let $\tilde{\rho}_\mathrm{gs} = \rho_\mathrm{gs} + \Delta \rho$ be an inexact or perturbed ground-state density used as input in the inversion scheme. Then \cref{eq:XdensTotalErrorBound} provides a bound on the error in the potential obtained from the inversion at a non-zero $\eps$. 
        The first term in the right-hand side arises from terminating the procedure at some $\eps>0$ and goes to zero as $\eps\to 0^+$ by the strong convergence $v^\varepsilon_\mathrm{xc} \to v_\mathrm{xc}$ (Theorem~\ref{thrm:MYRegProperties}\ref{item:limit-pot-strong}), whereas the second term, $\tfrac{1}{\eps}\norm{\Delta \rho}_\Xdens$, blows up in the regime $\eps \ll \norm{\Delta \rho}_\Xdens$.
        The combination of two terms with opposite effects suggests that there might exist an optimal non-zero value for $\eps$ in practical calculations.
        Without regularisation, in the standard DFT formulation,  the variation of the potential cannot be controlled by the variation of the density. More precisely, Garrigue~\cite{Gar21} has demonstrated within Lieb's DFT framework ($L^p$-spaces) that the inverse problem is ill-posed. However, in a one-dimensional setting, and with a properly defined density space, the mapping from densities to potentials can in fact be shown to be Lipschitz continuous, i.e., density-potential inversion is stable in such a setting~\cite{Corso_2025-HK}. 
        \item[(iii)] \label{rmk:ZMP} We further remark that \cref{eq:XdensVxc} with \cref{def:duality-map} can be written as
        \begin{equation}\label{eq:ZMPconnection}
            v_\mathrm{xc}(\rr) = \lim_{\lambda \to \infty} \lambda \int_{\mathbb{R}^3} \frac{ \rho^{1/\lambda}(\rr') - \rho(\rr')}{4\pi|\rr - \rr'|} \dd{\mathbf{r}'}, \qquad \rho^{1/\lambda} = \Pi^{1/\lambda}_\mathcal{F}(\rho_\mathrm{gs}),
        \end{equation}
        where $\lambda = 1/\eps$ relabels the regularisation parameter $\eps$ in terms of the 
        (ZMP) parameter $\lambda$. For the density space $\Xdensaff$, this establishes the mathematical validity of the numerical approach due to \citet{ZMP1994} in the periodic setting. That the ZMP method can be understood in terms of Moreau--Yosida regularisation was first discussed in \cite{Penz_2023} 
        and an expression similar to \cref{eq:ZMPconnection} was obtained.
    \end{itemize}
\end{remark}

Using the construction of \cref{eq:variableFunc}, the guiding functional (\cref{eq:GuidingFunctional}) can be generalised to include variable amounts of the Hartree energy and external potential. In particular, suppose that $\mathcal{F}_0(\rho) =T_\mathrm{DM}(\rho)$ and that for $\alpha,\beta,\gamma\in\mathbb R$ and $\alpha\geq 0$ we choose $v_0 = \beta v_\mathrm{H}(\rho_\mathrm{gs}) + \gamma v_\mathrm{ext}$ in \cref{eq:variableFunc}, where we recall that $ v_\mathrm{H}(\rho_\mathrm{gs}) \in \Xpot$ is the Hartree potential and $v_\mathrm{ext}\in \Xpot$ is an external potential. We then define the $(\alpha,\beta,\gamma)$-parametrised guiding functional
\begin{equation}\label{eq:variableGuideFunc}
    \mathcal{F}_{\alpha,\beta,\gamma}(\rho) = T_\mathrm{DM}(\rho) + \alpha E_\mathrm{H}(\rho)  + \beta \langle v_\mathrm{H}(\rho_\mathrm{gs}),  \rho \rangle+  \gamma \langle v_\mathrm{ext}, \rho \rangle,
\end{equation}
which always satisfies Assumption~\ref{assump:FX}.
The proximal points of this guiding functional for two different sets of parameters can be related using Proposition~\ref{prop:ProxGuidingFunc} and the fact that $E_\mathrm{H}(\rho) = \frac{1}{2} \|\rho\|^2_\Xdens$ and $v_\mathrm{H}(\rho) = \rmr E_\mathrm{H}(\rho) = J(\rho)$.  We summarise in the following corollary.
\begin{corollary}\label{cor:ProxGuidingFunc}
    Let $\mathcal{F}_{\alpha,\beta,\gamma}$ be the guiding functional of \cref{eq:variableGuideFunc} with $\rho_\mathrm{gs}\in \Xdensaff$ and $v_\mathrm{ext}\in \Xpot$. Then for two parameter sets $(\alpha,\beta,\gamma)$ and $(\alpha',\beta',\gamma')$ with $\alpha>\alpha'\geq 0$  the proximal points are related by
    \begin{equation*}
        \Pi^\varepsilon_{\mathcal{F}_{\alpha,\beta, \gamma}}\qty(\rho) 
        = \Pi^\mu_{\mathcal{F}_{\alpha',\beta',\gamma'}} \qty(\frac{\mu}{\eps}\rho - \mu(\beta -\beta')\rho_\mathrm{gs}  -\mu(\gamma - \gamma') J^{-1}(v_\mathrm{ext})) \quad \text{for} \quad
        \mu = \frac{\eps}{1 + \eps(\alpha - \alpha')}>0.
    \end{equation*}
\end{corollary}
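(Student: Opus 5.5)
We obtain the corollary by writing $\mathcal{F}_{\alpha,\beta,\gamma}$ as a guiding functional of the form \cref{eq:variableFunc} built on top of $\mathcal{F}_{\alpha',\beta',\gamma'}$, and then applying Proposition~\ref{prop:ProxGuidingFunc}.

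\emph{Step 1: rewriting the functional.} Since $E_\mathrm{H}(\rho)=\tfrac12\|\rho\|_\Xdens^2$ by Proposition~\ref{prop:EH}, we have identically in $\rho$
\begin{equation*}
    \mathcal{F}_{\alpha,\beta,\gamma}(\rho) = \mathcal{F}_{\alpha',\beta',\gamma'}(\rho) + \frac{\alpha-\alpha'}{2}\|\rho\|_\Xdens^2 + \langle (\beta-\beta') v_\mathrm{H}(\rho_\mathrm{gs}) + (\gamma-\gamma') v_\mathrm{ext}, \rho\rangle .
\end{equation*}
Set $\mathcal{F}_0=\mathcal{F}_{\alpha',\beta',\gamma'}$, take $\alpha-\alpha'>0$ in place of $\alpha$, and take $v_0=(\beta-\beta')v_\mathrm{H}(\rho_\mathrm{gs})+(\gamma-\gamma')v_\mathrm{ext}\in\Xpot$. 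With these choices, $\mathcal{F}_{\alpha,\beta,\gamma}=(\mathcal{F}_0)_{\alpha-\alpha',v_0}$ in the notation of \cref{eq:variableFunc}.

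\emph{Step 2: checking the hypotheses.} By Theorem~\ref{th:FDM-lsc} and the remark following \cref{eq:variableGuideFunc}, $\mathcal{F}_0$ is proper, convex and l.s.c.\ on the Hilbert space $\Xdens$, so it satisfies Assumption~\ref{assump:FX}. We then apply Proposition~\ref{prop:ProxGuidingFunc} with $\mu=\eps/(1+\eps(\alpha-\alpha'))$ and obtain
\begin{equation*}
    \Pi^\eps_{\mathcal{F}_{\alpha,\beta,\gamma}}(\rho)=\Pi^\mu_{\mathcal{F}_0}\Big(\tfrac{\mu}{\eps}\rho-\mu J^{-1}(v_0)\Big).
\end{equation*}

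\emph{Step 3: identifying $J^{-1}(v_0)$.} The inverse duality map $J^{-1}$ is linear, so
\begin{equation*}
    J^{-1}(v_0)=(\beta-\beta')J^{-1}(v_\mathrm{H}(\rho_\mathrm{gs}))+(\gamma-\gamma')J^{-1}(v_\mathrm{ext}).
\end{equation*}
Since $v_\mathrm{H}(\rho_\mathrm{gs})=J(\rho_\mathrm{gs})$, the first term is $J^{-1}(v_\mathrm{H}(\rho_\mathrm{gs}))=\rho_\mathrm{gs}$. Substituting this into Step 2 gives the claimed formula.

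\emph{The main subtlety} is the affine structure: $\rho_\mathrm{gs}\in\Xdensaff$ rather than $\Xdens$, and $J$ discards the $\GG=0$ component. Hence $J^{-1}(J(\rho_\mathrm{gs}))$ equals $\rho_\mathrm{gs}-N/|\Omega|$, not $\rho_\mathrm{gs}$. We handle this by fixing conventions: the quadratic-expansion argument in Proposition~\ref{prop:ProxGuidingFunc} is run on the affine space, where $\|\rho\|_{\Xdensaff}$ only sees nonzero Fourier modes. The argument of the proximal map is then interpreted consistently, with the constant background absorbed so that the argument lies in the appropriate (affine) space on which $\mathcal{F}_0$ is finite. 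We identify $\rho_\mathrm{gs}$ with its $\Xdens$-component wherever $J^{-1}$ appears, as the paper does implicitly in Definition~\ref{def:spaces}. We check that the constant shift does not alter the argmin, because the relevant norms and the dual pairing against elements of $\Xpot$ with $\hat v_{\GG=0}=0$ ignore the zero mode.
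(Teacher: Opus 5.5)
Your proposal is correct and follows the paper's own route: the paper also obtains the corollary by applying Proposition~\ref{prop:ProxGuidingFunc} with $\mathcal{F}_0=\mathcal{F}_{\alpha',\beta',\gamma'}$, parameter $\alpha-\alpha'$ and $v_0=(\beta-\beta')v_\mathrm{H}(\rho_\mathrm{gs})+(\gamma-\gamma')v_\mathrm{ext}$, together with $E_\mathrm{H}=\tfrac12\|\cdot\|_\Xdens^2$ and $v_\mathrm{H}=J$. You additionally point out that $J^{-1}J(\rho_\mathrm{gs})=\rho_\mathrm{gs}-N/|\Omega|$, which the paper passes over silently, and your fix is sound: the $\Xdens$-norm and the pairings with $\Xpot$ ignore the $\GG=0$ mode, so shifting the prox argument by a constant does not change the minimiser.
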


From the observation that the stationarity conditions for the proximal points corresponds to two different sets of parameters in \cref{eq:variableGuideFunc} and that the proximal densities are equal under Corollary~\ref{cor:ProxGuidingFunc}, we give the following remark. 
\begin{remark}
    The total KS potential determined by the inversion scheme does not depend on the choice of parameters in the guiding functional. In particular, for any choice of parameters
    $\alpha,\beta,\gamma\in\mathbb R$ with $\alpha\geq0$ where $\rho^\eps = \Pi^\varepsilon_{\mathcal{F}_{\alpha,\beta,\gamma}} (\rho_\mathrm{gs})$, the ($\eps$-dependent) potential determined by the inversion satisfies
    \begin{equation*}
        \underline{\partial} T(\rho^\eps) + v^\varepsilon_{\alpha,\beta,\gamma} \ni 0 
        \quad \text{where} \quad
        v^\varepsilon_{\alpha,\beta,\gamma} = \alpha v_\mathrm{H}[\rho^\eps] + \beta v_\mathrm{H}[\rho_\mathrm{gs}] + \gamma v_\mathrm{ext} + \frac{1}{\eps} J( \rho^\eps -\rho_\mathrm{gs}).
    \end{equation*}
    By Theorem~\ref{thrm:MYRegProperties}\ref{item:limit-pot-strong},  $v_{\alpha,\beta,\gamma}^\eps \to v_\mathrm{KS} $ (strongly) in $\Xpot$ (a Hilbert space), where $-v_\mathrm{KS}\in \underline{\partial} T(\rho_\mathrm{gs})$ is the element in the subdifferential with minimal norm.
\end{remark}
\begin{remark}
    The choice of guiding functional, i.e., the choice of parameters, will determine which potential the inversion algorithm is targeting. In particular, as $\eps \to 0^+$, the term
    $\tfrac{1}{\eps} J( \rho^\eps -\rho_\mathrm{gs})$ gives:
    \begin{enumerate}[(i)]
        \item the full KS potential if $(\alpha,\beta,\gamma)=(0,0,0)$, 
        \item the Hartree-exchange-correlation potential if $(\alpha,\beta,\gamma)=(0,0,1)$, and
        \item the exchange-correlation potential if $(\alpha,\beta,\gamma)=(1,0,1)$ or $(0,1,1)$.
    \end{enumerate}
\end{remark}
Since determining the xc potential is the major challenge for KS-DFT, the guiding functional usually includes the contributions from $v_\mathrm{ext}$ and $v_\mathrm{H}$. 

\section{Numerical Examples}\label{sec:Numerics}
Let us now illustrate the density-potential inversion scheme presented in this work, in particular the KS inversion scheme in the periodic setting presented in \cref{subsec:PeriodicIKS}, with some numerical examples. We focus on two examples: the periodic Gross--Pitaevskii equation in one dimension and a selection of three-dimensional bulk materials. The implementation and numerical results are available on GitHub\footnote{\href{https://github.com/vebjorhb/MY-periodic-inversion}{github.com/vebjorhb/MY-periodic-inversion}}.
First, let us briefly introduce the numerical implementation of the inversion algorithm.             

\subsection{Implementation of the Inversion Algorithm}\label{subsec:InversionApproaches}
A central problem in the presented inversion scheme is the determination of the
proximal density. Since, by Proposition~\ref{prop:UniqueProx}, the proximal density is
unique, it may be determined by any optimisation method of choice.
For applications in quantum chemistry the direct minimisation and self-consistent field approaches
are widely available in numerical codes and thus natural candidates.
We will further introduce a simple proximal point algorithm, the method typically used
for non-smooth and constrained minimisation problems~\cite{Parikh2014}.  

\subsubsection{Direct Minimisation Approach}\label{subsec:DirectMinimisation}
The central computational problem of the inversion algorithm outlined in \cref{sec:InversionScheme}
is to compute the proximal density for a given $\eps$. This can be achieved
by directly minimising the functional $\mathcal{E}(\rho;\, \rho_\mathrm{gs})$, as introduced in \cref{eq:MYEnergyFunctional}, with respect to $\rho$.
Specialising to the homogeneous Sobolev setting and 
the parametrised guiding functional of \cref{eq:variableGuideFunc},
this leads to the minimisation problem
\begin{equation}
    \label{eq:dmouterprob}
    \rho^\eps = \argmin_{\rho\in \Xdensaff} \qty{\mathcal{F}_{\alpha,\beta,\gamma}(\rho)
    + \frac{1}{2\eps} \norm{\rho-\rho_\mathrm{gs}}^2_{\Xdens}}.
\end{equation}
Instead of treating the full mixed-state constrained-search functional $T_\mathrm{DM}$
in \cref{eq:variableGuideFunc}, we approximate it by the KS kinetic-energy functional,
which is obtained by restricting $T_\mathrm{DM}$ to the pure-state constrained search over Slater determinants.
That is, if $\{\varphi^j\} \subset \mathcal{H}_1$ is an orthonormal family of
$N$ single-particle states, then the KS kinetic-energy functional
$T_\mathrm{S} : \Xdensaff \to \overline{\mathbb{R}}$ is 
\begin{equation*}
    T_\mathrm{S}(\rho) = \left\{ \begin{array}{ll}
         \displaystyle \inf \qty{ \sum_{j=1}^N \langle \hat{t}\varphi^j,  \varphi^j\rangle : \sum_{j=1}^N \abs{\varphi^j(\mathbf{r})}^2 = \rho, \, \varphi^j \in L^2_\text{per}(\Omega),\, \langle\varphi^j, \varphi^k\rangle = \delta_{j,k},\,\forall j,k= 1,\dots, N} & \text{if}\; \rho\in\densSet, \\
         +\infty & \text{otherwise} . 
    \end{array}\right.
\end{equation*}
That every $\rho\in\densSet$ can be represented by a such a Slater-determinant state was demonstrated in \citet[Th.~1.2]{Lieb1983}. 
By the fact that $T_\mathrm{S}$ is a constrained search restricted to Slater determinants, it follows that $T_\mathrm{DM}(\rho)\leq T_\mathrm{LL}(\rho)  \leq T_\mathrm{S}(\rho)$ for all $\rho \in \Xdensaff$. Note however, that $T_\mathrm{S}$ is in general not a convex functional.
Here, we consider $\{\varphi^j\}_{j=1}^N$ as a set of orbitals with unit occupations and any set of optimisers $\{\varphi^j_\star\} \subset \mathcal{W}_1$
of $T_\mathrm{S}(\rho)$ has finite kinetic energy by construction.  

Observe that by employing this approximation of the kinetic energy functional
the problem in \cref{eq:dmouterprob} is in fact a nested minimisation:
the evaluation of $T_\mathrm{S}$ requires minimising over orthonormal spin orbitals $\Phi = (\varphi^1, \ldots, \varphi^{N})$
that generate a density $\rho$, while the outer problem in \cref{eq:dmouterprob}
is the minimisation over $\rho\in\Xdensaff$.
The two optimisation problems can thus be merged, namely
by expressing the density in terms of orbitals,
\begin{equation*}
    \rho_\Phi(\mathbf{r}) = \sum_{j=1}^{N} \abs{\varphi^j(\mathbf{r})}^2,
\end{equation*}    
This leads to a joint minimisation problem over all orbitals,
\begin{equation}
    \label{eq:dmprob}
        \min\left\{
            \sum_{j=1}^N \langle \hat{t}\varphi^j,  \varphi^j\rangle
            + \mathcal{G}_{\alpha,\beta,\gamma}^\eps(\rho_\Phi; \rho_\text{gs}) \ :\ 
            \Phi = (\varphi^1, \ldots, \varphi^N) \in \left(L^2_\text{per}(\Omega)\right)^N,
            \langle\varphi^j, \varphi^k\rangle = \delta_{j,k},\, \forall j,k= 1,\dots, N
            \right\},
\end{equation}
where 
\[
    \mathcal{G}_{\alpha,\beta,\gamma}^\eps(\rho; \rho_\text{gs}) = 
    \alpha E_\mathrm{H}(\rho)  + \beta \langle v_\mathrm{H}(\rho_\mathrm{gs}),  \rho \rangle+  \gamma \langle v_\mathrm{ext}, \rho \rangle
    + \frac{1}{2\eps} \norm{\rho-\rho_\mathrm{gs}}^2_{\Xdens}.
\]
We will refer to \cref{eq:dmprob} to as the \textit{direct minimisation} problem.
For a given $\eps > 0$ and $\rho_\mathrm{gs} \in \Xdensaff$
we can obtain the proximal density as $\rho^\eps = \rho_{\Phi_\star}$,
where $\Phi_\star$ is a minimiser of \cref{eq:dmprob}.
Although the minimising orbitals are only unique up to unitary transformations on the subspace spanned by $(\varphi^1_\star,\ldots,\varphi^N_\star)$,
the problem is well-defined because all minimisers $\Phi_\star$ generate the same proximal density.
This general prescription for determining the proximal point has previously been employed in \cite{Herbst_2025}. 

\subsubsection{The Self-Consistent Field Approach} \label{subsec:SCFApproach}
The \textit{self-consistent field} (SCF) approach is an alternative iterative procedure
to the direct minimisation setting for computing the proximal density.
The same approximation to the ($\alpha,\beta,\gamma$)-parametrised guiding functional \cref{eq:variableGuideFunc} is used, namely the KS kinetic-energy functional $T_\mathrm{S}$ is employed, rather than $T_\mathrm{LL}$ or $T_\mathrm{DM}$. Again, we
consider an orbital parametrisation of the electronic density with $1 \leq j \leq N$
the summation index over the orbitals.

The proximal point may be computed by solving the Euler--Lagrange equations associated
with the energy functional $\mathcal{E}(\rho;\, \rho_\mathrm{gs})$.
This gives rise to the Kohn--Sham-like equations
\begin{equation}
    \label{eq:kseqns}
    \qty(- \frac{1}{2} \nabla^2 + v_i^{\eps}(\mathbf{r})) \varphi_{i}^{\eps,j}(\rr) = e_{i}^{\eps,j}\varphi_{i}^{\eps,j} (\rr)
    \quad \text{where} \quad
    v_{i}^\eps = \alpha v_\mathrm{H}(\rho^\varepsilon_{i-1}) + \beta v_\mathrm{H}(\rho_\mathrm{gs})  + \gamma v_\mathrm{ext} +  \frac{1}{\eps}J(\rho^\varepsilon_{i-1} - \rho_\mathrm{gs}).
\end{equation}
For each iterate $i\geq 1$, the proximal density
is computed as $\rho^\varepsilon_{i}(\rr) = \sum_{j=1}^N |\varphi_{i}^{\eps,j}(\rr)|^2$.
The $e_{i}^{\eps,1} \leq \dots \leq e_{i}^{\eps,N}$ are the Lagrange
multipliers associated with the orthonormality constraint. After
transforming $\varphi_{i}^{\eps,j}$ to the eigenbasis of $\hat{t} +  v_i^\eps$,
the Lagrange multipliers are the eigenvalues of the KS equations, i.e., describe the orbital energies.
If the SCF sequence $\rho^\varepsilon_i$ converges for $i\to \infty$,
then $\rho^\varepsilon_i \to \rho^\eps$ since the proximal point $\rho^\eps$ is a unique
point (Proposition~\ref{prop:UniqueProx}).
We can therefore start from some initial guess $\rho^\varepsilon_0 = \sum_{j=1}^N |\varphi_0^{\eps,j}|^2$,
and iterate \cref{eq:kseqns} in order to find the proximal point for a given $\eps$.

\subsubsection{A Proximal Point Algorithm}\label{subsec:ProxPointAlg}
In general, algorithms that compute the proximal point tackle the minimisation of proper, convex, and l.s.c.\  functionals. The proximal density, $\rho^\eps = \Pi_\mathcal{F_{\alpha,\beta,\gamma}}^\eps(\rho_\mathrm{gs})$, is the minimiser of $\mathcal{E}(\rho;\,\rho_\mathrm{gs})$ from \cref{eq:MYEnergyFunctional}, which in the $\Xdens$-topology is proper, convex, and l.s.c.
Iteratively determining the proximal point for a given sequence $\{\lambda_k\}_{k=0}^\infty \subset \mathbb{R}_+$ such that $\sum_{k=0}^\infty \lambda_k = +\infty$ and a starting guess $\rho^\varepsilon_0 \in \Xdensaff$, 
\begin{equation}\label{eq:ProximalIteration}
    \rho_{k+1}^\eps = \Pi^{\lambda_k}_{\mathcal{E}(\cdot;\, \rho_\mathrm{gs})}(\rho_k^\eps) 
    = \argmin_{\sigma \in \Xdensaff} \qty{\mathcal{E}(\sigma;\,\rho_\mathrm{gs}) + \frac{1}{2\lambda_k}\norm{\sigma-\rho^\varepsilon_k}^2_{\Xdens}} 
    \quad \text{for} \quad k \in \mathbb{N},
\end{equation}
is then a proximal point algorithm for minimising $\mathcal{E}(\rho;\,\rho_\mathrm{gs})$. In particular, it holds that $\rho^\varepsilon_k \rightharpoonup \rho^\eps$ (weakly) in $\Xdens$ as $k\to \infty$~\cite[Thm.~27.1]{Bauschke_2017} and additionally that\footnote{By rewriting \cite[Eq.~(27.5)]{Bauschke_2017} as  $2 \gamma_n (f(x_{n+1}) - \inf f) \leq \norm{x_n - z}^2 -\norm{x_{n+1} - z}^2$ (with $z$ as the minimiser of $f$), summing from $n=0$ to $k$ over both sides and using that $f(x_{n+1}) \leq f(x_n)$ for all $n$, the stated result follows.}
\begin{equation} \label{eq:ProxAlgEnergyConv}
    \mathcal{E}(\rho^\varepsilon_{k+1};\rho_\mathrm{gs}) -\mathcal{E}(\rho^\eps;\, \rho_\mathrm{gs}) \leq  \frac{\norm{\rho^\varepsilon_0 - \rho^\eps}^2_{\Xdens}}{2\sum^{k}_{i=0}\lambda_i} \to  0^+.
\end{equation}
Moreover, the accuracy of the energy functional $\mathcal{E}(\rho^\varepsilon_{k};\, \rho_\mathrm{gs})$ is bounded by the accuracy of the starting guess $\rho^\varepsilon_0$ and the rate of divergence of $\sum_{i=0}^k \lambda_i$ with respect to $k$. The parameters $\lambda_k>0$ may \textit{a priori}  be chosen arbitrarily. However, if all $\lambda_k$ are identical, then $\mathcal{E}(\rho^\varepsilon_{k};\rho_\mathrm{gs})$ converges to $ \mathcal{E}(\rho^\eps;\, \rho_\mathrm{gs})$ as $1/k$ and the same rate of convergence also holds if all $\lambda_k$ are uniformly bounded away from zero. Note that \cref{eq:ProxAlgEnergyConv} does not automatically give any estimate for the rate of convergence of $\rho^\eps \to \rho_\mathrm{gs}$.

From \cref{eq:ProximalIteration}, we recognise that the proximal iteration satisfies the stationarity condition 
\begin{equation*}
    \underline{\partial}\mathcal{E}(\rho^\varepsilon_{k+1};\, \rho_\mathrm{gs}) + \frac{1}{\lambda_k} J\qty(\rho^\varepsilon_{k+1} - \rho^\varepsilon_k) \ni 0.
\end{equation*}
For the choice of guiding functional $\mathcal{F}_{\alpha,\beta,\gamma}$ from \cref{eq:variableGuideFunc} this implies that (using $v_\mathrm{H}(\cdot) = J(\cdot)$ and the linearity of $J$ for $\Xdensaff$)
\begin{equation*}
\begin{aligned}
    -\underline{\partial}T_\mathrm{DM}(\rho^\varepsilon_{k+1}) &\ni  \alpha v_\mathrm{H}(\rho^\varepsilon_{k+1}) + \beta v_\mathrm{H}(\rho_\mathrm{gs}) + \gamma v_\mathrm{ext}  + \frac{1}{\eps}J\qty(\rho^\varepsilon_{k+1}- \rho_\mathrm{gs}) + \frac{1}{\lambda_k} J\qty(\rho^\varepsilon_{k+1} - \rho^\varepsilon_k)\\
    &=\alpha v_\mathrm{H}(\rho^\varepsilon_{k}) + \beta v_\mathrm{H}(\rho_\mathrm{gs}) + \gamma v_\mathrm{ext}  + \frac{1}{\eps}J\qty(\rho^\varepsilon_{k}- \rho_\mathrm{gs}) + \frac{\eps + \lambda_k+ \eps\lambda_k \alpha}{\eps\lambda_k} J\qty(\rho^\varepsilon_{k+1} - \rho^\varepsilon_k).
\end{aligned}
\end{equation*}
The proximal point algorithm thus gives rise to a KS-type potential depending on the previous step $\rho^\varepsilon_{k}$ equivalent to \cref{eq:kseqns} with an additional term due to the difference between the iterates. In particular, the stationarity condition gives rise to the equations
\begin{equation}\label{eq:ProxKSeqs}
    \qty[- \frac{1}{2} \nabla^2 +\alpha v_\mathrm{H}(\rho^\varepsilon_{k})+ \beta v_\mathrm{H}(\rho_\mathrm{gs}) + \gamma v_\mathrm{ext}   + \frac{1}{\eps}J\qty(\rho^\varepsilon_{k}-\rho_\mathrm{gs}) + \frac{\eps+\lambda_k+\eps\lambda_k \alpha}{\eps\lambda_k}J\qty(\rho^\varepsilon_{k+1} - \rho^\varepsilon_k)] \varphi_{k+1}^{\eps,j} = e_{k+1}^{\eps,j}\varphi_{k+1} ^{\eps,j},
\end{equation}
where $\rho^\varepsilon_{k+1}(\rr) = \sum_{j=1}^N  |\varphi_{k+1}^{\eps,j}(\rr)|^2$. Note that \cref{eq:ProxKSeqs} includes the orbitals $\varphi_{k+1}^{\eps,j} (\rr)$ through $\rho^\varepsilon_{k+1}$ in a non-linear way. Thus, solving \cref{eq:ProxKSeqs} would introduce an SCF problem for each iterate, where at self-consistency $e_{k+1}^{\eps,1} \leq\cdots \leq e_{k+1}^{\eps,N}$ are the orbital energies and all orbitals $\varphi_{k+1}^{\eps,j} (\rr)$ are (singly) occupied, as in  \cref{subsec:SCFApproach}. In practice, this limits the usability of the method significantly as it introduces an additional step that must be solved self-consistently.

For our numerical investigations on the periodic Gross--Pitaevskii equation, the modified energy functional $\mathcal{E}(\sigma;\,\rho_\mathrm{gs}) + \frac{1}{2\lambda_k}\norm{\sigma-\rho^\varepsilon_k}^2_{\Xdens}$ is instead minimised directly over $\sigma\in \Xdensaff$ or gets rewritten into a partial differential equation. This will be addressed further in \cref{sec:gpe:discretisation}.

\subsection{The Periodic Gross--Pitaevskii equation}\label{subsec:GPE}
To explore the inversion algorithm, let us consider the one-dimensional \textit{periodic Gross--Pitaevskii equation} (GPE) on the interval $\Omega = [0,a]$ with periodic boundary conditions, defined by the non-linear one-particle Schrödinger equation
\begin{equation}\label{eq:GPEquation}
    \qty[-\frac{1}{2}\nabla^2 + v + g\rho] \varphi = E \varphi,
    \quad \text{where} \quad \rho = |\varphi|^2 .
\end{equation}
Here $v$ is a periodic potential and $g>0$  is the coupling constant. We define the Gross--Pitaevskii energy functional $E(v)$ as the ground-state energy for a given potential $v\in\Xpot$,
\begin{equation*}
    E(v) = \inf_{\rho \in \mathcal{I}_1(\Omega)} \qty{\frac{1}{2}\|\nabla\sqrt{\rho}\|^2_{L^2} + \langle v, \rho \rangle + \frac{g}{2}\|\rho\|^2_{L^2}}.
\end{equation*}
For the infimum, one easily verifies that \cref{eq:GPEquation} is the corresponding first-order optimality condition. 
To motivate our use of the GPE, we want to highlight the similarity of the KS equations (\cref{eq:kseqns}) to \cref{eq:GPEquation}. Both are non-linear eigenvalue problems, since the potentials $v + g\rho$ in \cref{eq:GPEquation} and $v^{\eps}_i$ in \cref{eq:kseqns} depend on
the density, and therefore on the orbital-like quantities $\varphi$ and $\varphi_i^{\eps,j}$, respectively. By analogy, the GPE ground state $\varphi$ is also called an orbital.
Notably, the GPE problem is simpler as it 
requires the solution of only a single eigenpair $(E, \varphi)$, where in contrast, the KS equations need to be solved for
$N$ such eigenpairs $(e_i^{\eps,j}, \varphi_i^{\eps,j})$
to make up the next density $\rho_i^\eps$. The simplified setting of the GPE enables the numerical study of the algorithms presented in \cref{subsec:InversionApproaches}.

Since the GPE is a one-particle model,
the KS-like potential is trivially $v_\mathrm{KS} = v_\mathrm{ext} + g\rho$. In this setting, knowledge of $\rho_\mathrm{gs}$ directly determines the corresponding potential since the sought potential is $v_g(\rho_\mathrm{gs}) = g \rho_\mathrm{gs} $. 
The goal of the KS inversion is nevertheless to obtain $v_g \in \Xpot$ using the Moreau--Yosida-based inversion scheme outlined in \cref{subsec:InversionAlgorithm} above. The guiding functional
\begin{equation*}
    \mathcal{F}(\rho) = T(\rho) + \langle v_\mathrm{ext},  \rho \rangle
\end{equation*}
satisfies Assumption~\ref{assump:FX} and is applicable to the Moreau--Yosida regularisation. Given a ground-state density $\rho_\mathrm{gs} \in \Xdensaff$, the corresponding proximal density $\rho^\eps = \Pi_\mathcal{F}^\eps(\rho_\mathrm{gs})$ is the unique minimiser of the energy functional
\begin{equation} \label{eq:GPEnergyFunc}
    \mathcal{E}(\rho;\rho_\mathrm{gs}) = T(\rho) + \langle v_\mathrm{ext}, \rho\rangle + \frac{1}{2\eps} \norm{\rho - \rho_\mathrm{gs}}_{\Xdens}^2.
\end{equation}
It then follows from Corollary~\ref{cor:DensityPotentialIvesion} that the potential determined by the inversion scheme is 
\begin{equation} \label{eq:GPsequence}
    v_g = \lim_{\eps \to 0^+} v_g^\eps 
        = \lim_{\eps \to 0^+} \frac{1}{\eps}J(\rho^\eps - \rho_\mathrm{gs})
        = J\qty(\left. \partial^+_\eps\rho^\eps\right|_{\eps=0}).
\end{equation}

\subsubsection{Computational Setup}
\label{sec:gpe:discretisation}
The main goal of the implementation of the inversion scheme is to obtain an accurate numerical representation of the potential, $v_g$, given some reference ground-state density, $\rho_\mathrm{gs}$. The potential is obtained by extrapolating the limit $\eps\to 0^+$ in \cref{eq:GPsequence} for an exponentially decreasing sequence of $\eps$.
To calculate $v^\varepsilon_g$ for each $\eps$ we utilise a
plane-wave discretisation \cref{eq:FourierRepresentation}
for all involved quantities, such as the orbital $\varphi$, the densities $\rho$ and $\rho_\mathrm{gs}$
and the potentials $v^\varepsilon_g$.
The set of employed $e_\mathbf{G}$ is limited by introducing a finite cut-off energy $E_\mathrm{cut}$
and restricting the norm of the employed wave vector $\mathbf{G}$ to a finite value.
In particular, we restrict the basis for $\varphi$ such that
$\norm{\mathbf{G}}_2 \leq \sqrt{2 E_\mathrm{cut}}$ and the basis for all densities and potentials
such that $\norm{\mathbf{G}}_\infty \leq 2 \sqrt{2 E_{\mathrm{cut}}}$.
The basis for $\rho$, $\rho_\mathrm{gs}$ and $v^\varepsilon_g$ is thus roughly twice as large (in 1D)
as the basis for $\varphi$.
Employing these two different discretisations is standard for plane-wave DFT and ensures
that no additional numerical error
is introduced when computing the discretised density from the discretised orbitals with $\rho_\varphi(r) = |\varphi(r)|^2$.
Moreover, the use of the infinity norm for defining the density and potential basis sets leads
to a Cartesian grid in the Fourier frequencies,
ensuring that fast-Fourier transforms can be employed efficiently
when computing convolutions or point-wise operations.
See \cite[Sec.~2.3]{HerbstSun2025} for more details on the setup of discretisation grids for plane-wave DFT.

\textbf{Direct minimisation and SCF.}
In analogy to \cref{subsec:DirectMinimisation} the energy functional of \cref{eq:GPEnergyFunc} can be rewritten as the orbital minimisation problem
\begin{equation*}
    \mathcal{E}(\varphi;\rho_\mathrm{gs})  = \frac{1}{2}\int_{\Omega} \abs{\nabla \varphi(r)}^2 \dd{r}  + \int_\Omega v_\mathrm{ext}(r)  \rho_\varphi(r) \dd{r} + \frac{1}{2\eps} \norm{\rho_\varphi - \rho_\mathrm{gs}}_{\Xdens}^2.
\end{equation*}
Since $\mathcal{E}(\varphi;\rho_\mathrm{gs})$ has a similar structure as the usual energy expressions found in KS-DFT, standard minimisation techniques from KS-DFT apply. In particular, in anticipation of our treatment of bulk materials,
we minimise $\mathcal{E}$ using a BFGS-based quasi-Newton scheme implemented in DFTK~\cite{DFTKpaper},
where the proximal density and corresponding orbital is used as a starting guess for the next iteration in $\eps$. This approach will be referred to as the direct minimisation approach. To test convergence we will also consider an extension employing a full Newton scheme. Furthermore, we implemented an SCF-based computation of the proximal point,
where we have used a damped fixed-point solver for the SCF cycles. This will be referred to as the SCF implementation.

\textbf{PDE solver approach.}
In order to verify the accuracy of the inversion scheme, we make use of the fact that the Gross--Pitaevskii equation is a simple one-dimensional model. Since the proximal density is a stationary solution of $\underline{\partial}\mathcal{E}(\rho;\,\rho_\mathrm{gs})$ and that $\varphi(r) = \sqrt{\rho(r)}$, the proximal density may be obtained as a solution to the differential equation 
\begin{equation*}
    -\frac{1}{4} \frac{\nabla^2 \rho}{\rho}
    + \frac{1}{8} \frac{\left| \nabla \rho \right|^2}{\rho^2}
    + v_{\mathrm{ext}}
    + \frac{1}{\eps} J(\rho - \rho_{\mathrm{gs}})  
    + \mu = 0,
\end{equation*}
where $\mu$ is the Lagrange multiplier associated with the normalisation condition, i.e., $\int_\Omega\rho(r) \dd{r} = 1$. 
In our implementation, the proximal density is determined using a standard Newton-based partial differential equation (PDE) solver, i.e., from the Julia library \texttt{Optim}~\cite{Mogensen2018}. The problem is discretised using the same Fourier cut-off as described above, corresponding to a uniform one-dimensional spatial grid. The differential operators are evaluated in real space using first- and second-order finite differences, with periodic boundary conditions. Note here that DFTK also utilises \texttt{Optim} for its quasi-Newton implementation of the direct minimisation approach. 

\textbf{Proximal point algorithm.} Using a similar strategy to the minimisation of $\mathcal{E}(\varphi;\rho_\mathrm{gs})$, we may also investigate the proximal point algorithm presented in \cref{subsec:ProxPointAlg}. In particular, the minimisation of the modified energy functional $\mathcal{E}(\varphi;\, \rho_\mathrm{gs}) + \frac{1}{2\lambda_k} \norm{\rho_\varphi - \rho_k^\eps}_{\Xdensaff}^2$ can be used to determine the proximal point $\rho^\varepsilon_{k+1}$ iteratively from some staring guess $\rho^\varepsilon_0$. To investigate the accuracy and efficiency of the proximal algorithm, the proximal points $\rho^\varepsilon_{k}$ are determined by numerically solving the differential equation arising from the corresponding modified energy functional using the aforementioned PDE solver.

In our numerical setup, the external potential is chosen to be an optical lattice potential, $v_\mathrm{ext}(r) =-\sin^2(\pi r/a)$, and the discretisation is dictated by $E_\mathrm{cut} = 3000$~Ha as described above. The reference calculation is performed using the SCF approach where the density is converged such that the $L^2$ difference between iterates is $\sim10^{-14}$. Moreover, for the inversion we use the same discretisation setting and we employ the heuristic stopping criterion between iterates $k$ of $\norm{\rho^\varepsilon_k - \rho^\varepsilon_{k-1}}_\Xdens < \delta \cdot \eps $ introduced in \cite{Herbst_2025}. All results presented in this section are calculated with $\delta=10^{-3}$ with a cell length of $a=10$ and unit coupling $g=1$. 

\subsubsection{Results}
\Cref{fig:GPProximalConvergence} shows the convergence of the proximal density $\rho^\varepsilon$ and the determined potential $v_g^\varepsilon$ to $\rho_\mathrm{gs}$ and $v_g$, respectively. The inversion was performed using five different methods for computing the proximal density. Apart from the inversion using an SCF approach, the convergence of the proximal density displays no significant discrepancies between the methods, however, the potential recovered by the inversion is more sensitive to the method of choice. For values of $\varepsilon\lesssim 10^{-2}$ the SCF approach fails to reach the stopping criterion, and as a result, the inversion is terminated at $\varepsilon=10^{-4}$. Similarly, for values of $\varepsilon\lesssim 10^{-4}$, the direct minimisation method (the quasi-Newton) also fails to converge, and despite that the fact that the proximal density continues to approach the ground-state density, the potential does not improve. The Newton algorithm implemented in DFTK reaches its convergence criterion for all $\varepsilon$ values displayed in \cref{fig:GPProximalConvergence}, and fully agrees with the results obtained from the PDE approach. 
To separate the proximal point algorithm from the usual quantum-chemistry methods it is implemented using the PDE solver. Thus, the proximal point algorithm exhibits the same density and potential convergence as the PDE solver.

\begin{figure}
    \centering
    \includegraphics[width=\linewidth]{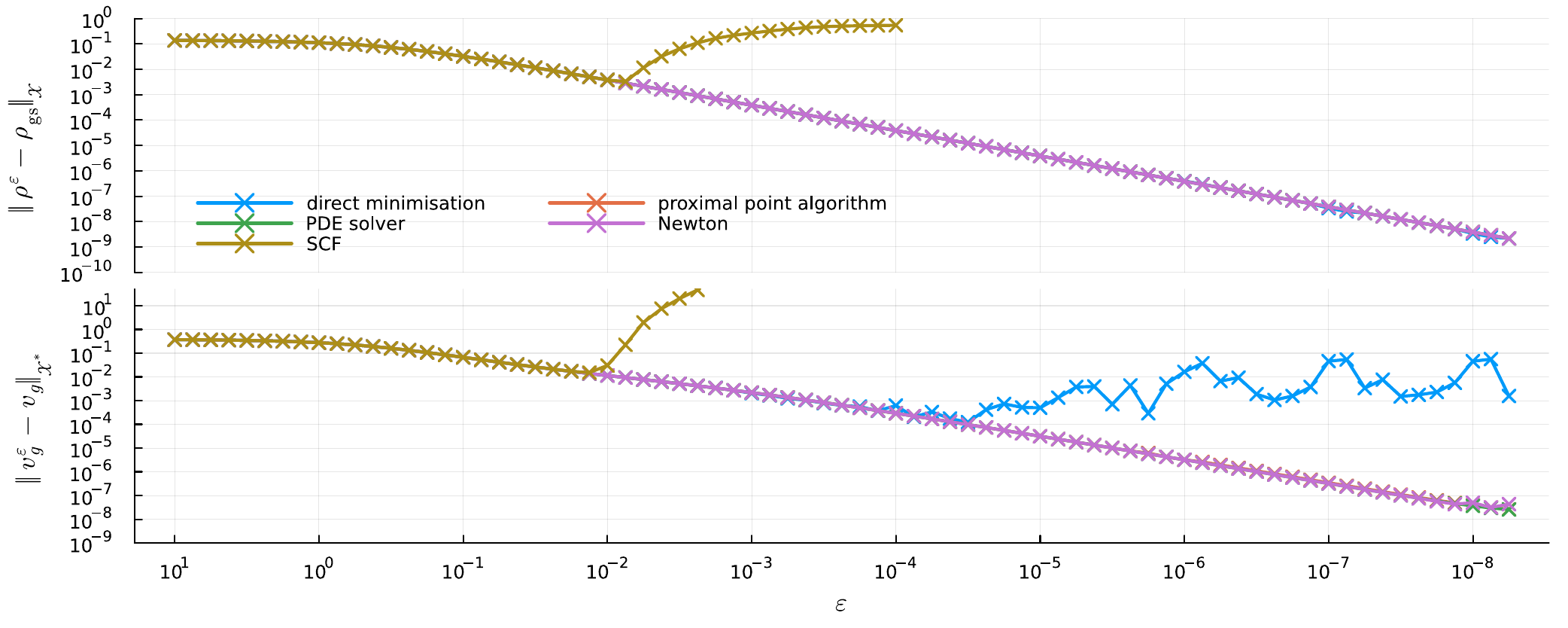}
    \caption{The convergence of the inversion scheme for different methods of computing the proximal density. (top) The convergence of $\rho^\varepsilon$ to the ground-state density $\rho_\mathrm{gs}$ measured in the norm of $\Xdens$ for an exponentially decreasing sequence in $\varepsilon$. (bottom) The convergence of the potential $v_g^\varepsilon$ determined by the inversion to the exact reference $v_g$, measured in the $\Xpot$-norm.}
    \label{fig:GPProximalConvergence}
\end{figure}

\begin{figure}
    \centering
    \includegraphics[width=\linewidth]{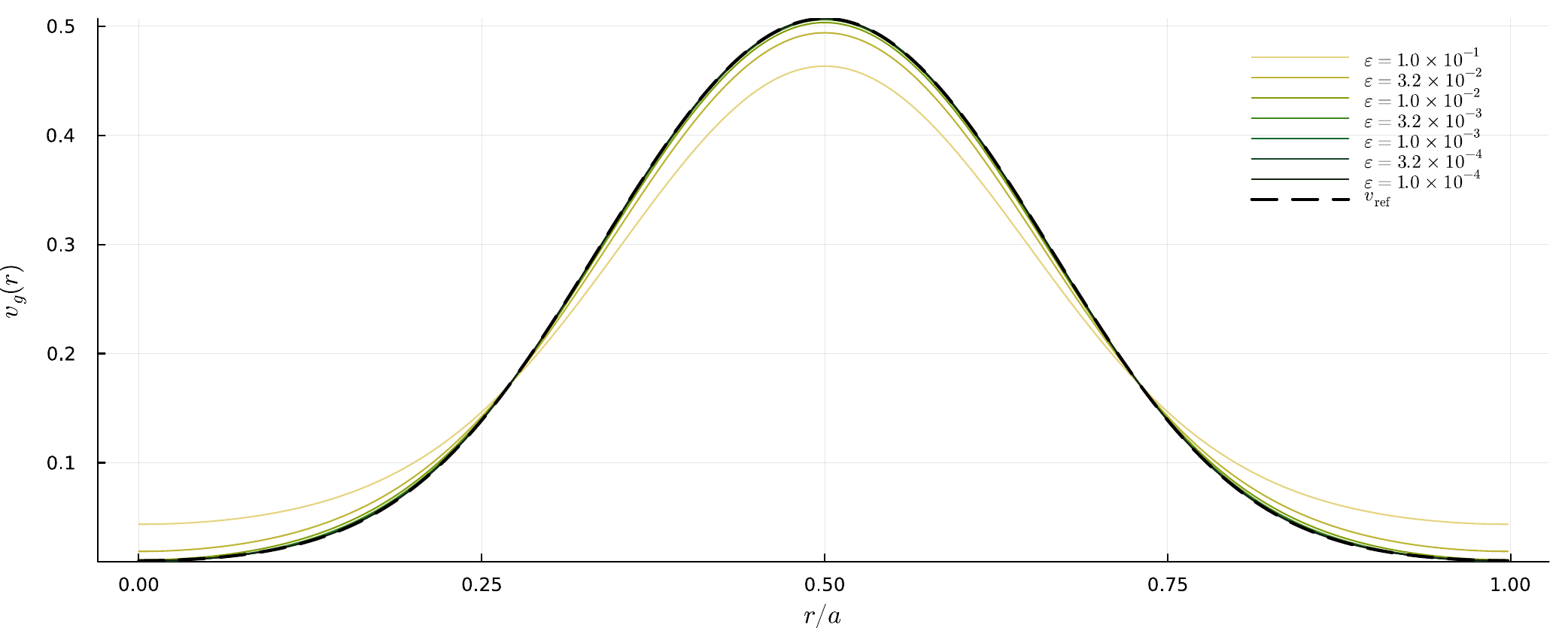}
    \caption{Real space plot of the potentials $v^\varepsilon_g(r)$ obtained by the inversion scheme for select values of $\varepsilon$ obtained using DFTK's Newton implementation. The true potential $v_g(r) = g\rho(r)$ is shown as black dashed line for reference.}
    \label{fig:GPpotential}
\end{figure}

To illustrate the density-potential inversion scheme, \cref{fig:GPpotential} displays the real-space plot of the potentials $v_g^\eps$ obtained by the inversion for select values of $\eps$. The exact reference potential $v_g(r) = g\rho(r)$ is shown for comparison. Since any potential in $\Xpot$ has zero mean, the displayed potentials $v_g^\eps$ have been shifted to the mean value of $v_g$. Moreover, the potential was obtained by determining the proximal density using the implementation of direct minimisation with a full Newton method.

\begin{figure}
    \centering
    \includegraphics[width=\linewidth]{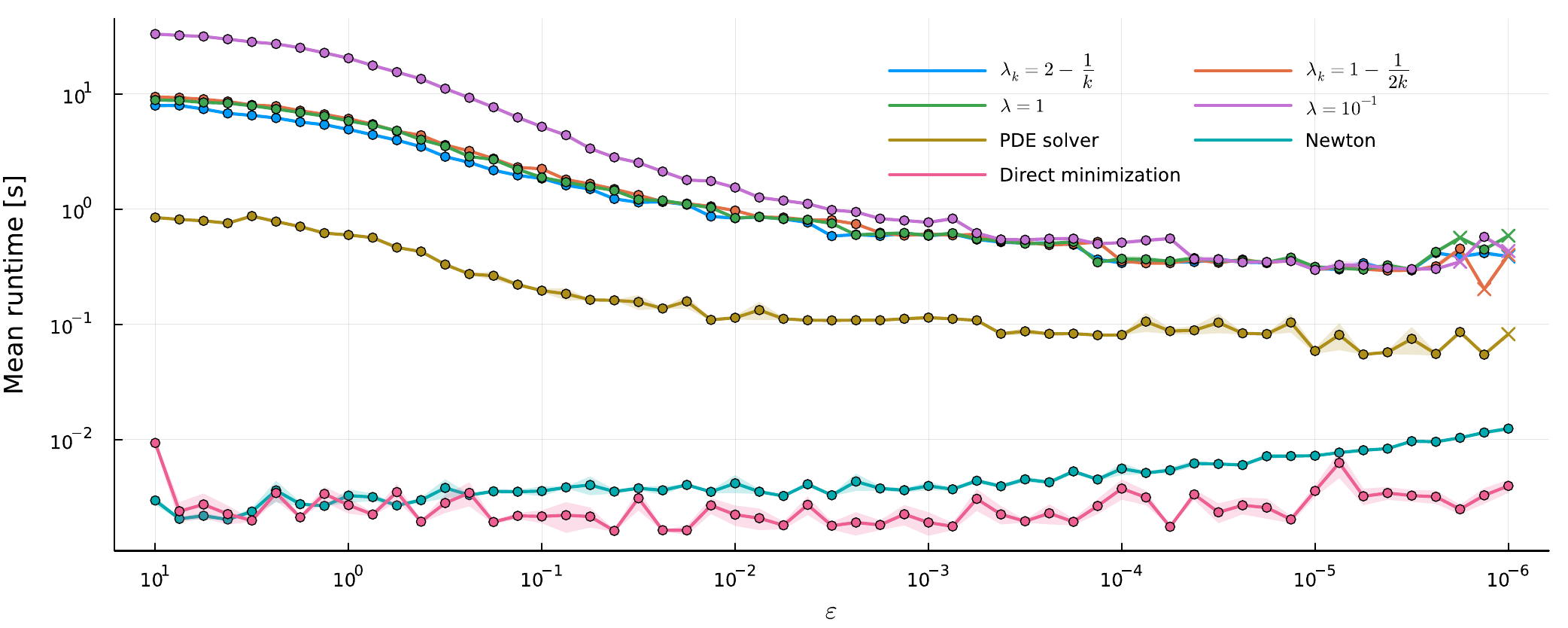}
    \caption{
    The mean computational time for the proximal point algorithm given in \cref{eq:ProximalIteration} in order to reach the desired tolerance of $\norm{\rho^\varepsilon_k - \rho^\varepsilon_{k-1}}_{\Xdens} \leq \eps \cdot 10^{-3}$ at a given value of $\eps$ when the external potential is $v_\mathrm{ext}(r) = -\sin^2(\pi r/a)$ and the $E_\mathrm{cut}=3000$. All data points drawn with a dot are converged  the prescribed tolerance, whereas values marked with a cross are failed to reach the tolerance. For comparison, the PDE solver, the quasi-Newton and full Newton methods for direct minimisation are also shown. For each method the runtime is averaged over five runs and the standard error from the mean is drawn as a shaded area.}
    \label{fig:GPfcalls}
\end{figure}

To illustrate the proximal point algorithm introduced in \cref{subsec:ProxPointAlg}, we examine its numerical behaviour for several choices of $\lambda$-sequences and compare to other methods discussed above.  Although the methods differ structurally, the computational time required to determine the proximal density for each value of $\varepsilon$ provides a useful estimate of the overall computational cost. 
There is considerable flexibility in the choice of the sequence $\{\lambda_k\}$ in the proximal point algorithm. Recall that if the $\lambda_k$ are uniformly bounded away from zero, then convergence of the energy functional is guaranteed. We therefore investigate the computational cost of achieving convergence for several representative $\lambda$-sequences. 
\Cref{fig:GPfcalls} shows the mean computational time (in seconds) required to meet the prescribed convergence criterion for each value of $\varepsilon$ averaged over five runs, and the shaded area indicates the standard error from the mean. Data points marked with a dot indicate successful convergence, whereas crosses denote parameter values for which the method failed to converge for at least one run.
Since the direct minimisation approaches (quasi-Newton and Newton) are optimised for the periodic setting used in DFTK, they exhibit faster convergence than the unspecialised PDE solver employed here. Nevertheless, the proximal point algorithm (i.e., using some $\lambda$-sequence) incurs a significant computational overhead relative to the method used to compute the proximal density since it entails additional proximal point calculations.

\subsection{Bulk Materials}\label{subsec:BulkMaterials}
The periodic homogeneous Sobolev setting introduced in \cref{subsec:homogeneousSobolev} is particularly suited for density-potential inversion on bulk materials as the $\Xdensaff$-norm aligns with the mean-field interaction energy (the Hartree energy) as stated in Proposition~\ref{prop:EH}. In this section, we numerically investigate the KS inversion scheme summarised in \cref{subsec:PeriodicIKS} on densities for selected bulk materials. The reference densities are obtained from ground-state KS calculations using the SCF approach described above. In this work, we address the inversion on two semiconductors (silicon and gallium arsenide) and two ionic salts (sodium chloride and potassium chloride). 

\subsubsection{Direct Minimisation for Bulk Materials}
In condensed matter physics bulk materials are usually modelled as perfectly periodic crystals.
As a result the unit cell $\Omega$ repeats infinitely along the lattice $\mathcal{R}$.
To avoid dealing with infinite repetitions, the standard approximation is the supercell approach,
where the unit  cell $\Omega$ is only repeated a finite number of times in each direction.
For example, an $l_1 \times l_2 \times l_3$ supercell is formed by repeating $\Omega$
respectively $l_1$, $l_2$ and $l_3$ times along the respective lattice directions.
At the boundary of this larger simulation supercell $\Omega^L$ one commonly
still employs periodic boundary conditions to avoid artificial surface effects.
This matches the setting of the periodic homogeneous Sobolev spaces introduced
in \cref{subsec:homogeneousSobolev} with the difference that the simulation box $\Omega^L$
is now $L = l_1 l_2 l_3$ times larger.
In particular, we emphasise that the orbitals $\varphi^j$,
which are introduced in \cref{subsec:DirectMinimisation,subsec:SCFApproach} 
to numerically solve the SCF or direct minimisation problem,
are now entities from $H^1_\mathrm{per}(\Omega^L)$---thus only periodic with respect to the supercell and \emph{not} with respect to $\mathcal{R}$. A naive numerical treatment of e.g.~the direct minimisation problem of \cref{eq:dmprob} thus requires a discretisation within the much larger simulation box $\Omega^L$.

However, in the absence of spontaneous symmetry breaking the ground-state density
$\rho_\mathrm{gs}$ keeps the $\mathcal{R}$-translational symmetry in bulk materials.
Since this implies that that the $\eps \to 0^+$ limit
of the proximal densities $\rho^\eps$ is $\mathcal{R}$-periodic,
it is natural to assume the $\mathcal{R}$-periodicity of $\rho^\eps$
for each $\eps$. To ensure this periodicity during the
minimisation problem \cref{eq:dmprob} we employ a Bloch transformation:
we relabel the supercell orbitals
$\varphi^j$ as $\varphi^{j,\mathbf{k}}(\mathbf{r}) = \rme^{\rmi\mathbf{k}\cdot{\mathbf{r}}} u^{j,\mathbf{k}}(\mathbf{r})$
and rewrite the density as
\begin{equation*}
     \rho_\Phi(\mathbf{r}) 
    = \dfrac{1}{L} \sum_{\mathbf{k} \in \mpgrid} \sum_{j=1}^{N} |\varphi^{j,\mathbf{k}}(\mathbf{r})|^2
    = \dfrac{1}{L} \sum_{\mathbf{k} \in \mpgrid} \sum_{j=1}^{N} |u^{j,\mathbf{k}}(\bm r)|^2.
\end{equation*}
   
In these expressions the $u^{j,\mathbf{k}}(\mathbf{r}) \in H^1_\mathrm{per}(\Omega)$
are again $\mathcal{R}$-periodic functions
and the $\mathbf{k}$-points are taken from the Monkhorst--Pack grid~\cite{monkhorst1976special}
$\mpgrid = \Omega^* \cap (
    \mathbb{Z}\mathbf{b}_1/l_1 + \mathbb{Z}\mathbf{b}_2/l_2 + \mathbb{Z}\mathbf{b}_3/l_3
)$,
where $\mathbf{b}_1$, $\mathbf{b}_2$, $\mathbf{b}_3$ are the (reciprocal) unit cell vectors spanning
the first Brillouin zone $\Omega^*$, the unit cell of the dual lattice $\mathcal{R}^\ast$.
Indeed, one easily verifies the resulting density to remain $\mathcal{R}$-periodic.
Inserting the Bloch factorisation into \cref{eq:dmprob} finally leads to the equivalent minimisation problem
\begin{equation} \label{eqn:kdmprob}
    \min\qty{
        \frac{1}{L}
        \sum_{\mathbf{k}\in\mpgrid}
        \sum_{n=1}^N 
        \frac{1}{2}{}\langle u^{n,\mathbf{k}}, (-i \nabla + \mathbf{k})^2 u^{n,\mathbf{k}} \rangle
        + \mathcal{G}_{\alpha,\beta,\gamma}^\eps(\rho_\Phi; \rho_\mathrm{gs}) \ : \
        \begin{matrix}
              u^{1,\mathbf{k}}, \ldots, u^{N,\mathbf{k}} \in L^2_\mathrm{per}(\Omega),\; & \forall {\mathbf{k}} \in \mpgrid,  \\
              \langle u^{n,\mathbf{k}}, u^{m,\mathbf{k}}\rangle = \delta_{n,m}, & \forall n,m= 1,\dots, N
        \end{matrix}}.
\end{equation}
This is the equation we consider when employing the direct minimisation approach
for computing the proximal point. Notably all its unknowns ($u^{n,\mathbf{k}}$, respectively $\rho_\Phi$)
are $\mathcal{R}$-periodic, such that again a discretisation within only the unit cell $\Omega$
is sufficient to solve this problem numerically.
Instead of setting the number of supercell repeats $l_1$, $l_2$, $l_3$ one often determines
these values implicitly by determining a desired maximal spacing of the $\mathbf{k}$-points
in $\mpgrid$.
For more details on the supercell approach in the context of forward KS-DFT,
see~\cite[Sec.~2.8]{lin2019math_electronic} and for more details on Bloch theory
we refer the reader to~\cite[Ch.~7]{Lewin_2024}.

Apart from considering the simulation of periodic supercells,
practical bulk material simulations commonly feature the use of pseudopotentials.
Compared to employing a bare Coulomb interaction in the potential $v_\mathrm{ext}$
for describing the attraction of electrons to nuclei, these pseudopotentials
are much smoother, such that their Fourier-space representation features faster decay,
which in turn implies that smaller plane-wave basis sets are required to reach convergence.
Beyond modifying the electron-nuclear interactions, the introduction of pseudopotentials
in a forward  KS calculation implies an additional change to the minimised
energy functional, namely the addition of a non-local Kleinman--Bylander (KB) term,
an operator with a low-rank integral kernel
$
V_\mathrm{KB}(\mathbf{r}, \mathbf{r}') = \sum_{i} d_i\, b_{i}(\mathbf{r})\, \overline{b_{i}(\mathbf{r}')},
$
where $d_i \in \mathbb{R}$ and the index $i$ labels the projectors $b_i \in L^2_\mathrm{per}(\Omega)$
of which there are a few tens per atom in the unit cell.
While our theory is not yet able to accommodate this term rigorously
(since we assume a strictly local KS potential),
the KB term is needed to numerically treat realistic applications in plane-wave discretisations.
For our inversion on bulk materials we will therefore add a term
$\langle \varphi^{n,\mathbf{k}}, V_\mathrm{KB} \varphi^{m,\mathbf{k}} \rangle$
to the objective function of the minimisation problem \cref{eqn:kdmprob}
corresponding to the 
pseudopotential, which was used for generating
the reference density $\rho_\mathrm{gs}$.

\subsubsection{Computational Setup}
In line with standard practice in KS-DFT calculations using plane-wave basis sets
we employ a similar setup to \cref{sec:gpe:discretisation} for discretising \cref{eqn:kdmprob}:
based on a cut-off parameter $E_\mathrm{cut}$ we construct a basis for the densities
by restricting Fourier wave vectors according to $\norm{\mathbf{G}}_\infty \leq 2 \sqrt{2 E_{\mathrm{cut}}}$,
while for the periodic part of the Bloch orbitals $u_{n,\mathbf{k}}$ the corresponding
discretisation basis is constructed according to $\norm{\mathbf{G} + \mathbf{k}}_2 \leq \sqrt{2 E_\mathrm{cut}}$.
Notably, on each $\mathbf{k}$-point a different plane-wave basis is thus employed to discretise
the corresponding orbitals.
Within this discretised setting, our results are obtained following the same schematic setup as presented in \cite{Herbst_2025}: First, we perform a forward KS computation using an SCF approach implemented in DFTK~\cite{DFTKpaper}, yielding the reference ground-state density $\rho_\mathrm{gs}$ required for our inversion. In this reference calculation, we use the PBE xc functional~\cite{Perdew1996}  together with the corresponding PBE PseudoDojo pseudopotentials~\cite{VanSetten2018}. The reference xc potential $v_\mathrm{xc}$ is then obtained self-consistently within this forward KS calculation and is exact at the level of the density-functional approximation with the aforementioned corrections.
For all systems, we use a  $\mathbf{k}$-point spacing of at most $0.12\, \text{\r{A}}{}^{-1}$
to construct the Monkhorst--Pack grid $\mpgrid$
and the energy cut-off is taken to be roughly twice the recommended values for the pseudopotentials used. For the four bulk materials of interest we use: for bulk silicon (Si) $E_\mathrm{cut} = 45$~Ha, for potassium chloride (KCl) $E_\mathrm{cut} = 65$~Ha, for gallium arsenide (GaAs) $E_\mathrm{cut} = 83$~Ha, and for sodium chloride (NaCl) $E_\mathrm{cut} = 83$~Ha.

Next, using the density $\rho_\mathrm{gs}$ obtained in the forward KS calculation together with the guiding functional $\mathcal{F}$ (defined in \cref{eq:GuidingFunctional}), where the kinetic energy contribution is approximated by $T_\mathrm{S}$, the KS inversion is performed in the same discretisation setting, i.e., $\mathbf{k}$-point spacing and energy cut-off.   
The proximal density $\rho^\varepsilon = \Pi^\varepsilon_{\mathcal{F}}(\rho_\mathrm{gs})$ is computed for an exponentially decreasing sequence of $\eps$, and the corresponding potential is extracted as $v_\mathrm{xc}^\eps = \tfrac{1}{\eps} J(\rho^\eps - \rho_\mathrm{gs})$. The proximal density is determined using of the direct minimisation approach described in \cref{subsec:DirectMinimisation}. The corresponding orbital minimisation problem (\cref{eq:dmprob}) is solved using a BFGS-based quasi-Newton scheme~\cite{Edelman1998,Boumal2023} that has been adapted to the geometrical structure of the Stiefel manifold arising from the orthogonality constraint on the orbitals. For each value of $\eps$, the direct minimisation is terminated using the heuristic stopping criterion introduced in \cite{Herbst_2025}, namely $\| \rho^\varepsilon_k - \rho^\varepsilon_{k-1}\|_\Xdens < \delta \cdot \eps$, where $\rho^\varepsilon_k$ denotes density of iterate $k$. All inversion results for the bulk materials use $\delta = 10^{-2}$.
In the event that the direct minimisation approach using the quasi-Newton method fails to converge, the inversion reverts to a full Newton method. However, efficient convergence of the Newton method typically requires an initial guess sufficiently close to the solution. Consequently, the direct minimisation using the full Newton method is always initialised using the orbitals produced by (possibly unconverged) quasi-Newton method. Because the full Newton method is significantly more computationally demanding, employing it only when the quasi-Newton method fails to converge provides an effective balance between cost and robustness. 

Following \citet{Bohle2026}, we numerically investigate the role of 
the ($\alpha,\beta,\gamma$)-dependent guiding functional in \cref{eq:variableGuideFunc} for its performance in the inversion procedure. In this setting, the reference potential $v_\mathrm{xc}$ obtained in the forward KS calculation is appropriately shifted to match the potential determined by the inversion. Moreover, note that when tuning the amount of external potential used in the guiding functional, as parametrised by $\gamma$, we scale only the local atomic potential (as no other local ``external'' potentials are present). The pseudopotential corrections as well as the non-local part of the atomic potential are then left unchanged

\subsubsection{Results} 
\begin{figure}
    \centering
    \includegraphics[width=\linewidth]{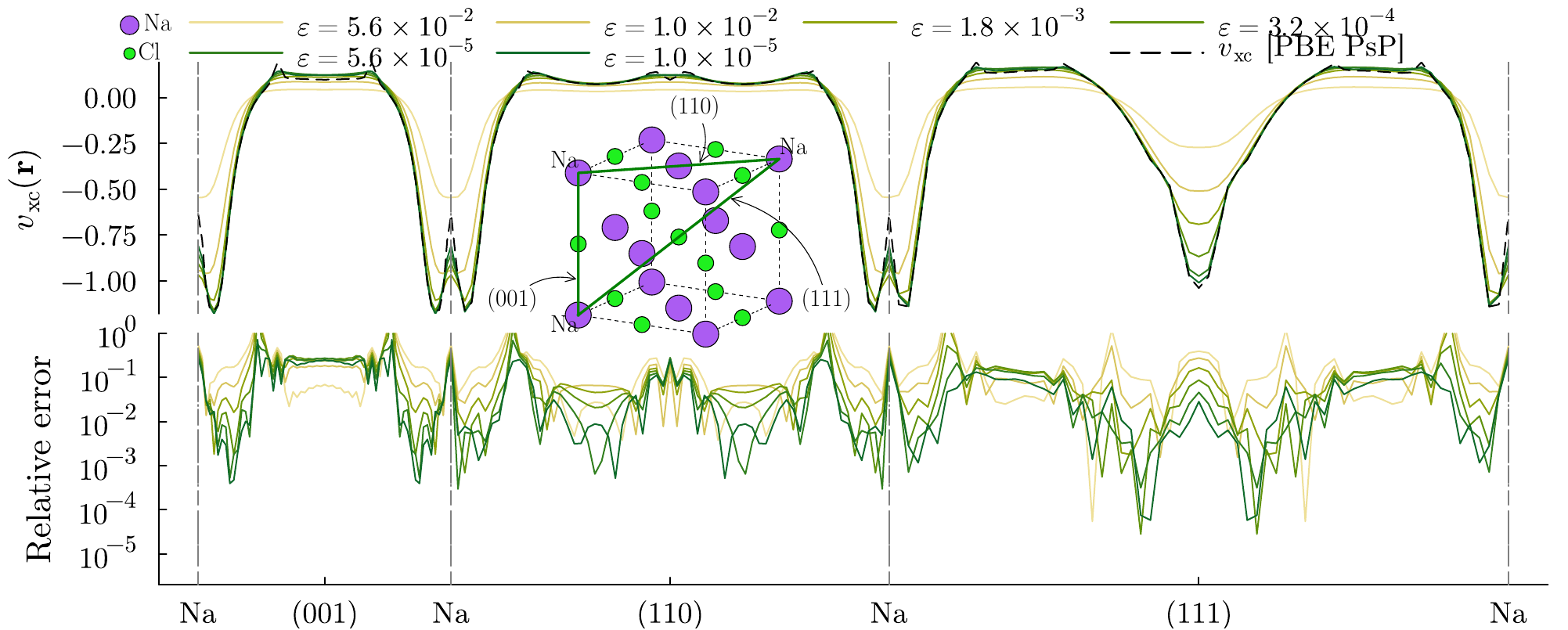}
    \caption{(top) Real-space plot of the reference xc potential for sodium chloride (NaCl) at the level of PBE~\cite{Perdew1996} with pseudopotentials plotted against the potential obtained from inversions for different values of the regularisation parameter $\eps$. The crystalline structure and the plotted path are shown displayed as an inset. (bottom) The corresponding pointwise relative error of the potentials obtained from inversions at various $\eps$ compared to the reference xc potential.}
    \label{fig:NaClPotential}
\end{figure}

The inversion displays a qualitatively similar behaviour as presented in \cite{Herbst_2025} and it accurately recovers the reference xc potential, as illustrated in \cref{fig:NaClPotential}. In particular, \cref{fig:NaClPotential} (top) displays a real-space plot of the potential obtained by the inversion along a closed path in the unit cell, together with the reference xc potential at the level of PBE~\cite{Perdew1996} functional with pseudopotentials. The potential is plotted along the path shown in the inset of \cref{fig:NaClPotential}. For Si, GaAs, and KCl, the analogous figures can be found in \cite{Herbst_2025}, where the inversion was performed in $H^{-1}_\mathrm{per}(\Omega)$ rather than $\Xdensaff$. However, this change of spaces does not qualitatively alter the results. The bottom panel of \cref{fig:NaClPotential} shows the corresponding pointwise relative error. We note that whenever the potential crosses zero, the relative error exhibits an artificial bump. This is a numerical artefact caused by the reference approaching zero and does not reflect an actual deterioration in the quality the reconstructed potential.

\begin{figure}
    \centering
    \includegraphics[width=\linewidth]{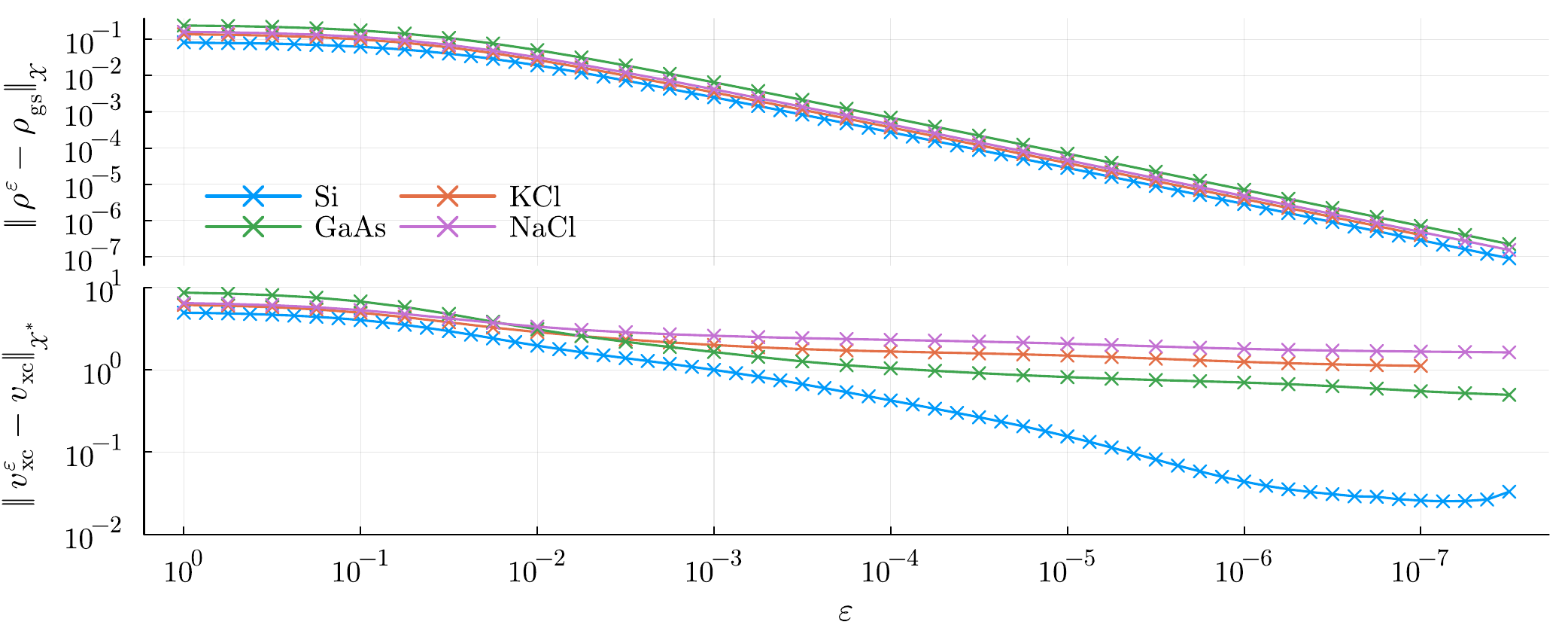}
    \caption{Convergence of KS inversion for Si ($E_\mathrm{cut} = 45$~Ha), KCl ($E_\mathrm{cut} = 65$~Ha), GaAs ($E_\mathrm{cut} = 83$~Ha), and NaCl ($E_\mathrm{cut} = 83$~Ha) with a $k$-point spacing of at most $0.12\,\text{\r{A}}{}^{-1}$ for an exponentially decreasing sequence in $\eps$ using the guiding functional $\mathcal{F}$ (\cref{eq:GuidingFunctional}). (top) The convergence of the proximal densities $\rho^\eps$ to the reference ground-state density $\rho_\mathrm{gs}$. (bottom) The convergence of the determined $v_\mathrm{xc}^\eps$ to the reference $v_\mathrm{xc}$ at the level of PBE~\cite{Perdew1996} with pseudopotentials.}
    \label{fig:Convergence}
\end{figure}

Furthermore, we investigate the convergence properties of the inversion scheme. For all four bulk materials, the proximal density is observed to converge towards the reference ground-state densities $\rho_\mathrm{gs}$, as shown in the upper panel of \cref{fig:Convergence}. For $\eps \lesssim 10^{-2}$, the proximal densities exhibits linear convergence in norm for all systems. This behaviour is also observed for the GPE  (\cref{fig:GPProximalConvergence}), where the onset of the linear regime occurs at $\eps
\simeq 10^{-1}$. The lower panel of \cref{fig:Convergence} shows the convergence of $v_\mathrm{xc}^\eps$ to $v_\mathrm{xc}$ as $\eps$ decreases. While the proximal density converges reliably and largely independently of the material, the convergence of the potential depends more strongly on the systems. Taken together, \cref{fig:GPProximalConvergence,fig:Convergence} indicate a clear relationship between the smoothness of the target potential and both the norm error and the convergence rate of the reconstructed potential. As illustrated by the corresponding real-space plots, resolving the sharpest features demands increasingly small values of $\eps$ as seen near the sodium atoms in \cref{fig:NaClPotential}.

\begin{figure}
    \centering
    \includegraphics[width=\linewidth]{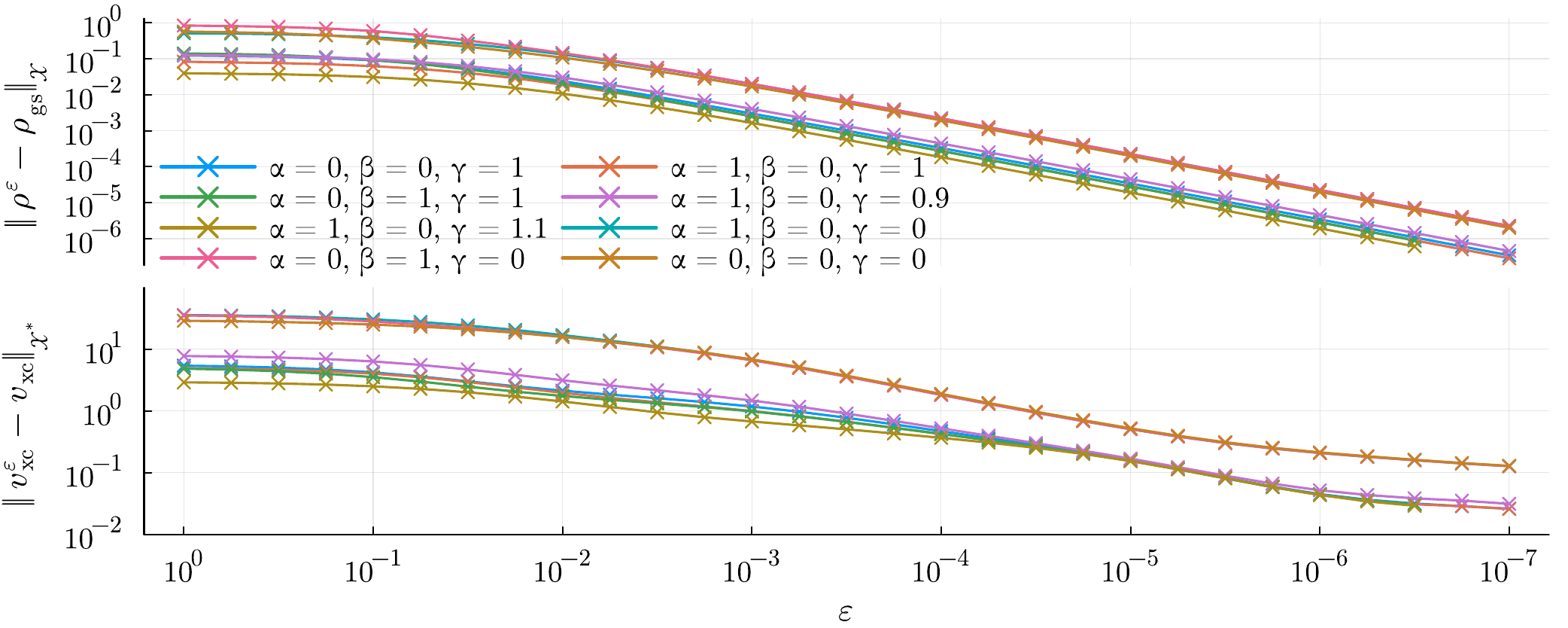}
    \caption{Convergence of KS inversion for Si using different guiding functionals, $\mathcal{F}_{\alpha,\beta,\gamma}$ (\cref{eq:variableGuideFunc}) with $E_\mathrm{cut} = 45$~Ha and a $k$-point spacing of at most $0.12\, \text{\r{A}}{}^{-1}$. Here $\alpha=1$, $\beta=0$, $\gamma=1$ corresponds to the inversion of \cref{fig:Convergence}. (top) The convergence of the proximal densities $\rho^\eps = \Pi^\varepsilon_{\mathcal{F}_{\alpha,\beta,\gamma}} (\rho_\mathrm{gs})$ to the reference ground-state density $\rho_\mathrm{gs}$. (bottom) The convergence of the determined $v_\mathrm{xc}^\eps$ to the reference $v_\mathrm{xc}$, where for all guiding functionals, the determined potentials are appropriately shifted to compare the exchange-correlation potentials only.}
    \label{fig:FunctionalsConvergence}
\end{figure}

Next, we investigate the influence of the Hartree energy $E_\mathrm{H}(\rho)$, the exact Hartree potential $v_\mathrm{H}(\rho_\mathrm{gs})$, and the external potential $v_\mathrm{ext}$ on the performance of the inversion. To this end, we utilise the ($\alpha, \beta, \gamma$)-dependent guiding functional $\mathcal{F}_{\alpha,\beta,\gamma}$ defined in \cref{eq:variableGuideFunc}. Focusing on bulk silicon, the inversion scheme is applied across a range of parameters combinations
$\alpha, \beta ,\gamma \geq 0$, and the resulting convergence behaviour is shown in \cref{fig:FunctionalsConvergence}. The upper panel displays the convergence of the proximal density $\rho^\eps = \Pi^\varepsilon_{\mathcal{F}_{\alpha,\beta,\gamma}}(\rho_\mathrm{gs})$ to $\rho_\mathrm{gs}$, while the lower panel shows the corresponding convergence of the reconstructed potentials. From \cref{fig:FunctionalsConvergence}, we observe that the external potential is the most significant contribution of the guiding functional to the accuracy of the reconstructed potential. Comparing the ``pure'' KS inversion ($\alpha=\beta=\gamma=0$) to the inversion based on $\mathcal{F}$ ($\alpha=\gamma=1$, $\beta=0$), we find that the external potential improves the accuracy by half an order of magnitude at a fixed value of $\eps$. Furthermore, the results in \cref{fig:FunctionalsConvergence}, together with an analogous calculation for potassium chloride, show that over-weighting the external potential by 10\% further improves the convergence of the inversion scheme for $\eps\gtrsim 10^{-4}$.  In contrast, replacing the Hartree energy term with the exact Hartree potential only produces a modest improvement, and only for the largest values of $\eps$.

\begin{figure}
    \centering
    \includegraphics[width=\linewidth]{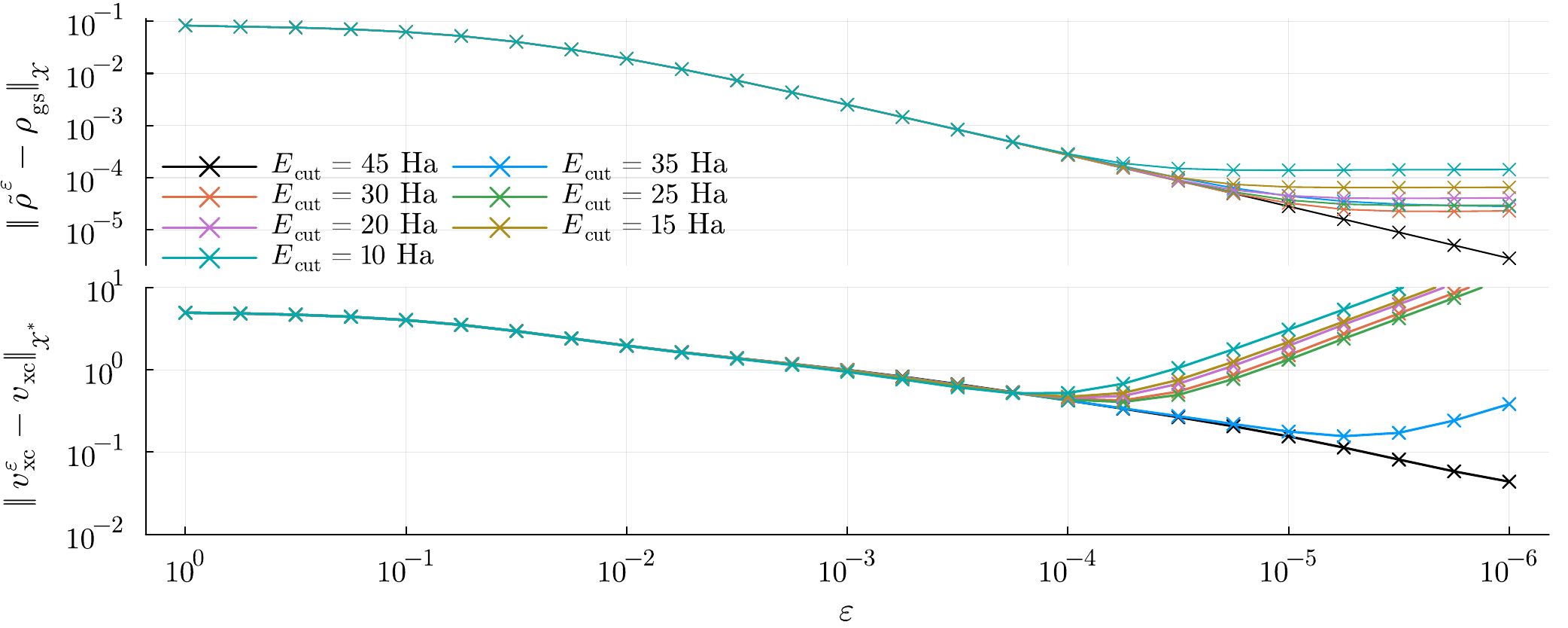}
    \caption{The convergence of the KS inversion with references density $\tilde{\rho}_\mathrm{gs} = \rho_\mathrm{gs} + \Delta \rho$ perturbed by truncations of $\rho_\mathrm{gs}$ in the Fourier representation and the guiding functional $\mathcal{F}$ (\cref{eq:GuidingFunctional}). The $k$-point spacing is at most $0.12\,\text{\r{A}}{}^{-1}$ and $E_\mathrm{cut}=45$~Ha represents the unperturbed reference and $\norm{\Delta \rho}_\Xdens$ is in the range of $2.7 \times 10^{-5}$ to $1.5 \times 10^{-4}$ for the perturbed densities. (top) The convergence of the proximal densities $\tilde{\rho}^\eps = \Pi^\varepsilon_\mathcal{F}(\tilde{\rho}_\mathrm{gs})$ to the exact reference $\rho_\mathrm{gs}$. (bottom) The convergence of the determined $\tilde{v}_\mathrm{xc}^\eps$ to the reference $v_\mathrm{xc}$.}
    \label{fig:SiDensityTruncation}
\end{figure}

Lastly, we introduce perturbations to the reference density to investigate the stability of the inversion scheme. Let $\tilde{\rho}_\mathrm{gs} = \rho_\mathrm{gs} + \Delta \rho$ denote an inexact reference density, where $\rho_\mathrm{gs}$ is the true ground-state density. Similarly to \cite{Herbst_2025}, we generate perturbations by truncating the Fourier representation of the ground-state density and interpolating the truncated density back onto the original grid. In practice, the size of the truncation is controlled by using a smaller $E_\mathrm{cut}$ than for the reference calculation. Using the guiding functional $\mathcal{F}$ (\cref{eq:GuidingFunctional}), we apply the inversion to bulk silicon, where the reference density is computed with $E_\mathrm{cut}=45$~Ha and the truncations introduced using cut-off values between 10~Ha to 35~Ha. This yields perturbations $\Delta\rho$ whose $\Xdens$-norm range from $1.5 \times 10^{-4}$ to  $2.7 \times 10^{-5}$. The convergence of the proximal densities $\tilde{\rho}^\eps = \Pi^\varepsilon_{\mathcal{F}}(\tilde{\rho})$ to the unperturbed reference $\rho_\mathrm{gs}$, as well as the corresponding potentials determined by the inversion, are shown in \cref{fig:SiDensityTruncation}. From the upper panel of \cref{fig:SiDensityTruncation} we observe the maximal achievable accuracy of the proximal density is limited by the magnitude of the imposed truncation, and analogous behaviour is seen for the potentials in the lower panel. Additionally, we observe that the divergence from references occurs when the size of the truncation becomes comparable to the regularisation parameter.

\subsection*{Acknowledgements} 
AL, MP, and VHB were supported by ERC-2021-STG grant agreement No.~101041487 REGAL. AL was also supported by Research Council of Norway through funding of the CoE Hylleraas Centre for Quantum Molecular Sciences Grant No.~262695. MFH acknowledges support by the Swiss National Science Foundation (SNSF, Grant Nos.~221186 and 10002757) as well as the NCCR MARVEL, a National Centre of Competence in Research, funded by the SNSF (Grant No.~205602). MP further acknowledges support from the German Research Foundation (Grant SCHI 1476/1-1). The authors thank Benedikt Menges for useful discussions.

\subsection*{Data Availability Statement}
The data and code supporting the findings of this study are openly available in a public repository on GitHub at: \href{https://github.com/vebjorhb/MY-periodic-inversion}{https://github.com/vebjorhb/MY-periodic-inversion}. The repository contains all datasets and implementation details necessary to reproduce the numerical results presented in this article.

\section*{References}
\bibliography{refs}

\end{document}